\documentclass[11pt]{article}
\usepackage{lmodern}
\usepackage[margin=0.8in]{geometry}

\usepackage{tocloft}

\usepackage{booktabs}\usepackage[svgnames]{xcolor}
\usepackage[bookmarksnumbered=true]{hyperref} 
\hypersetup{
     colorlinks = true,
     linkcolor = blue,
     anchorcolor = blue,
     citecolor = teal,
     filecolor = blue,
     urlcolor = blue
     }

   \usepackage{amsmath}
   \usepackage{amssymb}
   \usepackage{bm}
\usepackage{soul}
\usepackage{amsmath}
\usepackage{amssymb}
\usepackage{amsthm}
\usepackage{wasysym}
\usepackage{mathtools}
\usepackage{graphicx}
\usepackage{hyperref}
\usepackage{booktabs}
\usepackage{bbold}
\usepackage{mathrsfs}
\usepackage{tikz-cd}
\usepackage{color}
\usepackage{setspace}
\usepackage{comment}
\usepackage{multirow}
\usepackage{bm}
\usepackage{thmtools}
\usepackage{thm-restate}
\usepackage{cleveref}
\usepackage{enumitem}
\usepackage{csquotes}
\usepackage{longtable}
\usepackage{xypic}
\usepackage{mathdots}
\usepackage{rotating}

\newcommand{\NN}{\mathbb{N}}
\newcommand{\ZZ}{\mathbb{Z}}

\newcommand{\QQ}{\mathbb{Q}}
\newcommand{\RR}{\mathbb{R}}

\newcommand{\kk}{\mathbb{k}}
\newcommand{\Pinfty}{\mathcal{P}_{\infty}}
\newcommand{\mD}{\mathcal{D}}
\newcommand{\mC}{\mathcal{C}}
\newcommand{\mM}{\mathcal{M}}
\newcommand{\mS}{\mathcal{S}}
\newcommand{\mT}{\mathcal{T}}
\newcommand{\mA}{\mathcal{A}}
\newcommand{\mP}{\mathcal{P}}
\newcommand{\pwf}{\rm{pwf}}
\newcommand{\fp}{\rm{fp}}

\DeclareMathOperator{\interior}{int}
\DeclareMathOperator{\supp}{supp}
\DeclareMathOperator{\id}{id}
\DeclareMathOperator{\Rep}{Rep}
\DeclareMathOperator{\rep}{rep}
\DeclareMathOperator{\Ext}{Ext}
\DeclareMathOperator{\Hom}{Hom}
\DeclareMathOperator{\add}{add}

\DeclareMathOperator{\ind}{ind}
\DeclareMathOperator{\End}{End}
\DeclareMathOperator{\rad}{rad}

\theoremstyle{definition}
\newtheorem{definition}{Definition}[section]
\newtheorem{remark}[definition]{Remark}

\numberwithin{subcase}{case}
\newtheorem*{notation}{Notation}

\theoremstyle{plain}
\newtheorem{theorem}[definition]{Theorem}
\newtheorem{lemma}[definition]{Lemma}

\newtheorem{proposition}[definition]{Proposition}
\newtheorem{conjecture}[definition]{Conjecture}

\usepackage{newtxmath}      
\hypersetup{
     colorlinks = true,
     linkcolor = blue,
     anchorcolor = blue,
     citecolor = teal,
     filecolor = blue,
     urlcolor = black
     }

\usepackage{enumitem} 
\newlist{condenum}{enumerate}{1} 
\setlist[condenum]{label=\bfseries Condition \arabic*., 
                   ref=\arabic*, wide}

\title{Progress on 
Infinite Cluster Categories Related to Triangulations of the (Punctured) Disk
}
\author{Fatemeh Mohammadi, Job Daisie Rock, and Francesca Zaffalon}

\begin{document}

\maketitle

\begin{abstract}
    In this mostly expository paper, we present recent progress on infinite (weak) cluster categories that are related to triangulations of the disk, with and without a puncture.
    First we recall the notion of a cluster category.
    Then we move to the infinite setting and survey recent work on infinite cluster categories of types $\mathbb{A}$ and $\mathbb{D}$.
    We conclude with our contributions, two infinite families of infinite (weak) cluster categories of type $\mathbb{D}$.
    We first present a discrete, infinite version of Schiffler's combinatorial model of the punctured disk with marked points.
    We then produce each (weak) cluster category starting with representations of thread quivers, taking the derived category, and then taking the appropriate orbit category.
    We show that the combinatorics in the (weak) cluster categories match with the corresponding combinatorics of the punctured disk with countably-many marked points.
    We also state two conjectures concerning weak cluster structures inside our (weak) cluster categories.
\end{abstract}


\setcounter{tocdepth}{2}
{\hypersetup{linkcolor=black}
{\tableofcontents}}

\section{Introduction}

\noindent{\bf 
Motivation.}
Cluster algebras are a family of commutative rings introduced by Fomin and Zelevinsky \cite{fomin2002cluster}. They were originally introduced to supply an algebraic framework for the study of Lusztig's total positivity \cite{lusztig2010introduction, lusztig1994total, fomin1999double}. Since then cluster algebras and their applications in other areas have been extensively studied. In particular, they naturally arise in the theory of reductive Lie groups, Poisson geometry, moduli spaces of Riemann surfaces,
tilting theory, quantum physics, and scattering amplitudes; see e.g.~the introduction of \cite{caldero2008triangulated} and the references therein. One important family of cluster algebras are the homogeneous coordinate rings of Grassmannian varieties; see \cite{fomin2002clusterfinitetype,scott2006grassmannians}.
In \cite{galashin2019positroid}, Galashin and Lam extended Scott's results \cite{scott2006grassmannians} from Grassmannians to positroid varieties which are subvarieties of the Grassmannians; see also \cite{muller2017twist}. This furthermore connects to the Amplituhedron theory and computing the scattering amplitudes of planar $\mathcal{N} = 4$ super Yang-Mills theory \cite{arkani2018scattering, arkani2016grassmannian, golden2014motivic,  williams2021positive, lukowski2019cluster, drummond2020tropical, mohammadi2021triangulations}. 

\medskip
\noindent{\bf 
Finite type cluster algebras.}
A cluster algebra is a subring of the field of rational functions $\mathbb{F} = \QQ(u_1,\dots,u_n)$, generated by a family of distinguished generators called cluster variables. These generators are inductively constructed via a so-called mutation process from an initial fixed family of indeterminates. A cluster algebra is finite if the number of cluster variables is finite.
Cluster algebras of finite type are classified in terms of Dynkin diagrams; see \cite{fomin2002clusterfinitetype}. In particular, the only Grassmannians Gr$(k,n)$ with $2\leq k\leq n/2$, whose coordinate rings are finite type cluster algebras are Gr$(2,n)$, Gr$(3, 6)$, Gr$(3, 7)$, and Gr$(3, 8)$, where their associated cluster algebras correspond to the root systems of type $\mathbb{A}_n$, $\mathbb{D}_4$, $\mathbb{E}_6$, and $\mathbb{E}_8$, respectively; see \cite{fomin2002clusterfinitetype, scott2006grassmannians}. For instance, the cluster algebra associated with a given orientation of the Dynkin diagram $\mathbb{A}_n$ can be represented by triangulations of the $(n + 3)$-gon; see \cite{caldero2006quivers}. In particular, the mutable cluster variables correspond to the diagonals of the $(n + 3)$-gon, and any mutation corresponds to flipping a diagonal in a triangulation such that the result is also a triangulation. In this combinatorial model, every diagonal $\{i,j\}$ represents a Pl\"ucker coordinate $p_{ij}$ in Gr$(2,n)$ and flipping the diagonals $\{i,j\}$ and $\{k,\ell\}$ corresponds to the $3$-term Pl\"ucker relation: $p_{ij}p_{k\ell}=p_{ik}p_{j\ell}-p_{jk}p_{i\ell}$. Cluster algebras are also studied in the context of Gr\"obner degenerations of Grassmannians; see e.g.~\cite{gross2018canonical, bossinger2020toric, bossinger2021families}.

\medskip
\noindent{\bf 
Infinite type cluster algebras.}
The study of infinite type cluster algebras is relatively underdeveloped, despite their applications in various areas. For example, it relates to the subdivisions of the Amplituhedron, which is a geometric object defined by Arkani-Hamed and Trnka \cite{arkani2016grassmannian} whose subdivisions have profound use cases in physics, especially in computing scattering amplitudes of particles. 
In particular, positroid cells \cite{postnikov2006total}
are good candidates for decomposing the Amplituhedron, where the cluster variables in Gr$(k,n)$ describe the (facet) structures of such positroid cells; see \cite{lukowski2019cluster}.  

\medskip

\noindent{\bf Cluster categories.} The finite type cluster algebras have a combinatorial counterpart, which encodes their algebraic properties. Cluster categories are introduced to develop the dictionary between the combinatorial features of these objects and the properties of the cluster algebras; see \cite{buan2009cluster, buan2007cluster}. For example, in \cite{caldero2006quivers}, Caldero, Chapoton and Schiffler associated a cluster category to the Dynkin quivers of type $\mathbb{A}_n$. This has been further generalized by Holm and Jørgensen leading to the identification of a cluster category in \cite{holm2012cluster} of type $\mathbb{A}_\infty$. Moreover, its cluster tilting subcategories are in correspondence with triangulations of the infinity-gon; see also \cite{grabowski2014cluster}. Note that the Auslander-Reiten quiver can be defined for any abelian category, where the vertices represent
the indecomposable objects of the category and arrows the
irreducible morphisms among them. Since Holm and Jørgensen's work, cluster categories for infinite (or continuous) quivers have been extensively studied, with the hope of identifying the analogous cluster categories for other infinite Dynkin diagrams; see e.g.~\cite{igusa2015continuous, igusa2015cyclic, igusa2022continuous, berg2014threaded}. We continue this study for infinite type $\mathbb{D}$ cluster categories.
We note that there is a discrete infinite cluster category of type $\mathbb{D}$ by Yang \cite{yang2017cluster} and a continuous cluster category of type $\mathbb{D}$ by Igusa and Todorov \cite{igusa2013continuous}. See \S\ref{sec:infinite  cluster categories} for more details.

\medskip
\noindent{\bf 
Our contribution.} We construct a family of cluster categories of countable type $\mathbb{D}$ and a family of weak cluster categories of infinite type $\mathbb{D}$.
In particular, our main results below shows that, algebraically, there is a good justification for extending the notion of punctured $n$-gon from \cite{schiffler2008geometric} to the infinite setting. 

\medskip
Before stating our main results, we generalize the notion of punctured disk by introducing two families of combinatorial models $\mathcal{P}_{n,\infty}$ and $\mathcal{P}_{\overline{n,\infty}}$, where:
\begin{itemize}
\item $\mathcal{P}_{n,\infty}$ denotes the punctured disk with infinite marked points on its boundary and $n$ two-sided accumulation points of the marked points; see Figure~\ref{fig:inftygon} on page \pageref{fig:inftygon} for $\mathcal{P}_{1,\infty}$ and $\mathcal{P}_{3,\infty}$.
Note that the accumulation points are not marked in this case.
\item $\mathcal{P}_{\overline{n,\infty}}$ denotes the punctured disk with infinite marked points on its boundary and $n$ two-sided accumulation points of the marked points, which are also marked in this case; see Figure~\ref{fig:completed inftygon} on page \pageref{fig:completed inftygon} for $\mathcal{P}_{\overline{1,\infty}}$ and $\mathcal{P}_{\overline{3,\infty}}$.
\end{itemize}

We also introduce quivers of type $\mathbb{D}_{n,\infty}$ and $\mathbb{D}_{\overline{n,\infty}}$ (Definitions~\ref{def:D infinity} and \ref{def:D infinity closed}, respectively), which are defined using thread quivers (Definition~\ref{def:thread quiver}).

The quiver $\mathbb{D}_{n,\infty}$ can be thought of as having $i$ ``accumulation points'' indicated by the open circles below.
\begin{displaymath}
    \xymatrix@C=2ex@R=3ex{
    \bullet \\
    \bullet \ar[u] \ar[d] &
    \bullet \ar[l] &
    \cdots \circ \cdots \ar[l] &
    \bullet \ar[l] & \bullet \ar[l] & \bullet \ar[l] &
    \cdots \ar[l] & \cdots  \circ \cdots &
    \bullet \ar[l] & \bullet \ar[l] & \bullet \ar[l] &
    \cdots \circ \cdots \ar[l] &
    \bullet \ar[l] & \bullet \ar[l] \\
    \bullet
    }
\end{displaymath}
For the quiver $\mathbb{D}_{\overline{n,\infty}}$, we add the vertices corresponding to the accumulation points.
\begin{displaymath}
    \xymatrix@C=2ex@R=3ex{
    \bullet \\
    \bullet \ar[u] \ar[d] &
    \bullet \ar[l] &
    \cdots \bullet \cdots \ar[l] &
    \bullet \ar[l] & \bullet \ar[l] & \bullet \ar[l] &
    \cdots \ar[l] & \cdots \bullet \cdots &
    \bullet \ar[l] & \bullet \ar[l] & \bullet \ar[l] &
    \cdots \bullet \cdots \ar[l] &
    \bullet \ar[l] & \bullet \ar[l] \\
    \bullet
    }
\end{displaymath}

\begin{theorem}[Theorem~\ref{thm:main_result A body}]\label{thm:main_result A}
    There is a family of infinite type $\mathbb{D}$ cluster categories $\{\mathcal{C}(\mathbb{D}_{n,\infty}) \mid n\in\NN_{>0} \}$.
    Furthermore, for each $\mathcal{C}(\mathbb{D}_{n,\infty})$, the combinatorial data of clusters and mutation is encoded in $\mathcal{P}_{n,\infty}$.
\end{theorem}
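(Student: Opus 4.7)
The plan is to follow the three-step construction outlined in the abstract. First, I would form the category $\rep(\mathbb{D}_{n,\infty})$ of (finitely-presented) representations of the thread quiver $\mathbb{D}_{n,\infty}$ of Definition~\ref{def:D infinity}, and then pass to its bounded derived category $\mathcal{D}^b(\rep(\mathbb{D}_{n,\infty}))$. To obtain a candidate cluster category $\mathcal{C}(\mathbb{D}_{n,\infty})$, I would take the orbit category under the autoequivalence $\tau^{-1}\circ[1]$, where $\tau$ is the Auslander--Reiten translate and $[1]$ is the suspension functor, mimicking the classical construction of Buan--Marsh--Reineke--Reiten--Todorov in the finite-type case and following the approach reviewed earlier in the paper for types $\mathbb{A}$ and $\mathbb{D}$.

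Second, I would verify that $\mathcal{C}(\mathbb{D}_{n,\infty})$ has the structural properties required of an infinite cluster category: Hom-finiteness on indecomposables (up to shift), a triangulated structure inherited from $\mathcal{D}^b$ via an appropriate extension of Keller's theorem to the thread-quiver setting, and the 2-Calabi--Yau symmetry $\Ext^1(X,Y) \cong D\Ext^1(Y,X)$ on the relevant subcategory. This step is where the infinite nature of the quiver genuinely enters, because Keller's hypotheses need to be checked in a setting where $\mathbb{D}_{n,\infty}$ has $n$ accumulation points; I expect this to reduce to verifying that $\tau^{-1}[1]$ acts freely and sufficiently discretely on indecomposable objects, which should be tractable from the explicit combinatorial description of $\rep(\mathbb{D}_{n,\infty})$ provided by the thread-quiver formalism.

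Third, I would establish the combinatorial dictionary. The indecomposable objects of $\mathcal{C}(\mathbb{D}_{n,\infty})$ should biject with the (possibly tagged) admissible arcs in $\mathcal{P}_{n,\infty}$, extending Schiffler's correspondence between diagonals in the punctured polygon and indecomposables in the classical type $\mathbb{D}$ cluster category. Each of the $n$ two-sided accumulation points on the boundary of $\mathcal{P}_{n,\infty}$ should correspond to one of the $n$ ``thread'' gaps in $\mathbb{D}_{n,\infty}$, so the bijection is forced at the level of local combinatorics. I would then show that (weak) cluster-tilting subcategories correspond exactly to (maximal) non-crossing collections of arcs---that is, triangulations of $\mathcal{P}_{n,\infty}$---and that mutation of an indecomposable within a cluster-tilting subcategory corresponds to flipping the associated arc, by identifying the unique non-split extension triangle realizing the flip.

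The main obstacle will be the second step: verifying the triangulated and 2-Calabi--Yau structure on the orbit category, since infinite orbit categories are notoriously delicate and the presence of accumulation points forces one to work with a version of the Auslander--Reiten translate that is only partially defined on $\rep(\mathbb{D}_{n,\infty})$. Once this foundation is secured, the combinatorial identification with $\mathcal{P}_{n,\infty}$ should follow by an arc-by-arc adaptation of Schiffler's geometric model together with a case analysis for arcs that meet the puncture versus arcs that cross an accumulation region, and the matching of flips with triangle mutations should be a direct computation in the orbit category.
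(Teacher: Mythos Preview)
Your overall outline matches the paper's: construct $\mathcal{D}^b(\rep_\Bbbk^{\text{fp}}(\mathbb{D}_{n,\infty}))$, pass to the orbit category under $\tau^{-1}[1]$, and then set up the bijection with tagged edges of $\mathcal{P}_{n,\infty}$. The third step is also handled essentially as you anticipate, via an explicit bijection $\Phi$ (Proposition~\ref{prop:phi}) followed by a case analysis showing that $\Ext$-orthogonality matches non-crossing (Proposition~\ref{prop:ext equals crossing}).

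Where you diverge from the paper is in your second step, and this is worth noting. You plan to verify the triangulated and 2-Calabi--Yau structure by extending Keller's theorem directly to the thread-quiver setting, and you correctly flag this as the main obstacle. The paper avoids this head-on verification in two ways. First, it invokes \cite{berg2014threaded} to conclude that $\Bbbk\mathbb{D}_{n,\infty}$ is a semi-hereditary dualizing $\Bbbk$-variety, so that $\mathcal{D}^b(\mathbb{D}_{n,\infty})$ is already known to possess \emph{some} Serre functor. Second, and this is the key technical device, the paper identifies that Serre functor as $\tau^{-1}[1]$ by a reduction to the finite case: given any pair of indecomposables $M[m]$, $N[n]$, one chooses a large enough $k$ so that $M$, $N$, and $\tau^{-1}N$ all lie in the image of a fully faithful exact embedding $\rep_\Bbbk(\mathbb{D}_k)\hookrightarrow\rep_\Bbbk^{\text{fp}}(\mathbb{D}_{n,\infty})$, and then transports the known Serre duality isomorphism from $\mathcal{D}^b(\mathbb{D}_k)$. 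This sidesteps entirely the infinite-quiver analysis you were bracing for. Your direct approach might still succeed, but the paper's reduction-to-finite-$\mathbb{D}_k$ trick is both shorter and more robust, since it leverages the classical theory rather than reproving it in a new setting.
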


The requirement that every element in a cluster be mutable will fail for those elements corresponding to a limiting arc in the triangulation.
(See, for example, \cite{paquette2021completions,igusa2022continuous}.)
Thus, for $\mathbb{D}_{\overline{n,\infty}}$, we only obtain a weak cluster category that induces a cluster theory (Definitions~\ref{def:weak cluster category} and \ref{def:cluster theory}, respectively).
\begin{theorem}[Theorem~\ref{thm:main_result B body}]\label{thm:main_result B}
    There is a family of infinite type $\mathbb{D}$ weak cluster categories $\{\mathcal{C}(\mathbb{D}_{\overline{n,\infty}}) \mid n\in\NN_{>0} \}$ with cluster theories.
    Furthermore, for each $\mathcal{C}(\mathbb{D}_{\overline{n,\infty}})$, the combinatorial data of clusters and mutation is encoded in $\mathcal{P}_{\overline{n,\infty}}$. 
\end{theorem}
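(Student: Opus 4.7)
The plan is to mirror the construction sketched for Theorem~\ref{thm:main_result A} but to carry it out with the thread quiver $\mathbb{D}_{\overline{n,\infty}}$ in place of $\mathbb{D}_{n,\infty}$, and then to track carefully which indecomposables turn out to be non-mutable. First I would build the ambient triangulated category $\mathcal{C}(\mathbb{D}_{\overline{n,\infty}})$ by the three-step procedure announced in the abstract: take the category of representations of the thread quiver $\mathbb{D}_{\overline{n,\infty}}$, pass to its bounded derived category, and then form the orbit category by the usual cluster shift $\tau^{-1}[1]$ (or its appropriate analogue in this setting, as in Holm--J\o rgensen and Igusa--Todorov). The Auslander--Reiten--type combinatorics of the representations of $\mathbb{D}_{\overline{n,\infty}}$ should be computable from those of $\mathbb{D}_{n,\infty}$ together with finitely many extra ``limit'' indecomposables coming from the newly adjoined accumulation vertices.

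Next I would establish the bijection between isomorphism classes of indecomposables in $\mathcal{C}(\mathbb{D}_{\overline{n,\infty}})$ and isotopy classes of arcs in $\mathcal{P}_{\overline{n,\infty}}$. Two families of arcs need distinguished treatment: arcs with an endpoint at a marked accumulation point, and arcs that wind around the puncture. For the first, I would show that they correspond exactly to the ``limit'' indecomposables produced by the extra vertices, and for the second I would adapt Schiffler's tagged-arc bookkeeping \cite{schiffler2008geometric} to the discrete infinite setting, as is already needed for $\mathcal{P}_{n,\infty}$. Granting these identifications, triangulations of $\mathcal{P}_{\overline{n,\infty}}$ match maximal Ext-orthogonal collections of indecomposables in $\mathcal{C}(\mathbb{D}_{\overline{n,\infty}})$, which, together with verification of the axioms of Definition~\ref{def:weak cluster category}, gives the weak cluster category structure.

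To upgrade this to a cluster theory in the sense of Definition~\ref{def:cluster theory}, I would show that flipping any non-limiting diagonal of $\mathcal{P}_{\overline{n,\infty}}$ is realized by a genuine mutation in $\mathcal{C}(\mathbb{D}_{\overline{n,\infty}})$, with unique exchange triangles of the expected form, and that these exchange triangles are compatible across the system of triangulations. The verification is parallel to the proof of Theorem~\ref{thm:main_result A}, the point being that deleting from a triangulation of $\mathcal{P}_{\overline{n,\infty}}$ the arcs incident with accumulation marked points recovers a triangulation of a region whose combinatorics reduces to instances of $\mathcal{P}_{m,\infty}$, so one can invoke Theorem~\ref{thm:main_result A} locally.

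The main obstacle I anticipate is the precise control of the ``limiting'' indecomposables: arcs meeting an accumulation marked point are the prototypical non-mutable objects, and I need to prove that no other indecomposable fails to be mutable and that these limiting indecomposables are \emph{forced} into every triangulation that is compatible with the accumulation structure. Getting this rigidity statement, together with the compatibility of exchange triangles across the uncountable family of triangulations, is the delicate point; once it is in place, the rest of the theorem follows by transporting the combinatorics proved for $\mathcal{P}_{n,\infty}$ through the identification above.
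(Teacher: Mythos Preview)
Your proposal has a genuine gap at the very first step: the paper does \emph{not} build $\mathcal{C}(\mathbb{D}_{\overline{n,\infty}})$ as the orbit category $\mathcal{D}^b(\rep^{\text{fp}}(\mathbb{D}_{\overline{n,\infty}}))/\tau^{-1}[1]$, and there is a good reason. The machinery used for $\mathbb{D}_{n,\infty}$ relies on \cite[Proposition~7.20]{berg2014threaded}, which needs a locally discrete thread, whereas $\mathbb{D}_{\overline{n,\infty}}$ contains the accumulation vertices $(j,\infty)$ that have no predecessor. Concretely, the radical of $P_{(j,\infty)}$ is projective but not representable, there is no simple at $(j,\infty)$, and the Auslander--Bridger/Serre-duality arguments you are implicitly importing from the $\mathbb{D}_{n,\infty}$ case break down. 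The paper instead follows Paquette--Y{\i}ld{\i}r{\i}m: construct $\mathcal{C}(\mathbb{D}_{2n,\infty})$ (which \emph{is} locally discrete), then obtain $\mathcal{C}(\mathbb{D}_{\overline{n,\infty}})$ as a localisation of $\mathcal{C}(\mathbb{D}_{2n,\infty})$ by inverting all morphisms whose cones are supported in the even blocks $\{(2j,k)\mid k\in\ZZ\}$. The partial translation on $\mathcal{C}(\mathbb{D}_{\overline{n,\infty}})$ is then \emph{defined} as $\pi\,\tau\,\iota$ using the quotient $\pi$ and an explicit section $\iota$.

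Two further points where your outline diverges from what actually happens. First, the compatibility condition is not plain $\Ext$-orthogonality: when $\Phi(M)$ and $\Phi(N)$ share an accumulation endpoint, the paper shows that crossing corresponds to $\dim_\Bbbk(\Ext(M,N)\oplus\Ext(N,M))>1$, not $>0$ (Proposition preceding Theorem~\ref{thm:main_result B body}). This is exactly the subtlety that forces one to work with a cluster \emph{theory} via a pairwise compatibility $\mathbf{P}$ rather than with $\Ext$-configurations directly. Second, your claim that arcs meeting accumulation points are ``forced into every triangulation'' is false: Figure~\ref{fig:punctured 3-infinity-gon triangulations} (right) exhibits a triangulation of $\mathcal{P}_{\overline{3,\infty}}$ with no arcs to the boundary accumulation points; limiting arcs can occur but need not, and non-mutability arises only for those arcs that are genuine limits inside a given triangulation, not for every arc touching $(j,\infty)$.
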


\medskip
\noindent{\bf 
Structure of the paper.} The paper is structured as follows. In Section~\ref{sec:cluster_category}, we fix our notation and provide the necessary background from the theory of cluster algebras and cluster categories. We also briefly recall the notion of tilting modules and the relations with cluster algebras. In Section~\ref{sec:infinite}, we provide an overview of the existing cluster categories of infinite type in our scope. Section~\ref{sec:main} contains our main results, in particular the proofs of Theorems~\ref{thm:main_result A body} and
\ref{thm:main_result B body}. 

\medskip

Throughout the paper we assume that $\kk$ is an  algebraically closed field.

\section{Cluster categories}\label{sec:cluster_category}
In this section we review cluster categories and related results.
We begin with cluster algebras and then move on to cluster categories (Sections~\ref{sec:cluster algebras} and \ref{section: cluster category}, respectively).
Afterwards we discuss tilting theory (Section~\ref{sec:tilting theory}).

\subsection{Cluster algebras}\label{sec:cluster algebras}
Cluster algebras are a class of commutative rings introduced by Fomin and Zelevinsky \cite{fomin2002cluster} in the study of Lusztig's total positivity \cite{lusztig2010introduction, lusztig1994total, fomin1999double}. To define a cluster algebra, consider the field of rational functions $\mathbb{F} = \QQ(u_1,\dots,u_n)$ together with a transcendence basis $\mathbf{x} \subseteq \mathbb{F}$ over $\QQ$. Let $B = (b_{xy})_{x,y\in \mathbf{x}}$ be an integer matrix with rows and columns indexed by $\mathbf{x}$ such that for all $x,y \in \mathbf{x}$, $b_{xy}=0$ if and only if $b_{yx}=0$, $b_{xy}>0$ if and only if $b_{yx}<0$, and $b_{xx}=0$. A pair $(\mathbf{x},B)$ is called a \emph{seed}
that must satisfy the exchange
property. That is given a seed $(\mathbf{x},B)$ and $z \in \mathbf{x}$, there is a unique choice of $z'\in\mathbb{F}$, hence a unique choice of another cluster $\mathbf{x}'$, such that $\mathbf{x}' = \mathbf{x}\cup\{z'\}\backslash \{z\}$.
The \emph{mutation} of the matrix $B$ in direction $z$ is defined as the matrix $B' = (b'_{xy})_{x,y\in \mathbf{x}'}$ where $b'_{xy} = -b_{xy}$ if $x=z'$ or $y=z'$, and $b'_{xy} =b_{xy}+ \frac{1}{2}\left( |b_{xz}|b_{zy}+b_{xz}|b_{zy}|\right)$, otherwise.
The pair $(\mathbf{x}',B')$ is called \emph{mutation} of the seed $(\mathbf{x},B)$ in direction $z$. Let $\mathcal{S}$ be the set of all the seeds obtained by iterated mutations of $(\mathbf{x},B)$ and let $\mathcal{X}$ be the union of the transcendence bases appearing in the seeds in $\mathcal{S}$. These bases are called \emph{clusters} and the variables in $\mathcal{X}$ are called \emph{cluster variables}. The \emph{cluster algebra} $\mathcal{A}(\mathbf{x},B)$ is the subring of $\mathbb{F}$ generated by $\mathcal{X}$. Up to algebra isomorphism, the cluster algebra does not depend on the choice of the transcendence basis $\mathbf{x}$, hence we denote it by $\mathcal{A}_B$.

When the matrix $B$ is skew-symmetric, we can construct a quiver associated with the seed $(\mathbf{x},B)$ with vertices corresponding to elements in $\mathbf{x}$ and $b_{xy}$ arrows from $x$ to $y$ whenever $b_{xy}>0$. A cluster algebra $\mathcal{A}_B$ is said to be \emph{of finite type} if $\mathcal{X}$ is a finite set. In \cite{fomin2002clusterfinitetype}, it is shown that cluster algebras of finite type are precisely those for which there exist a seed whose corresponding quiver is of Dynkin type.

\medskip

\noindent{\bf Dynkin diagrams.} Dynkin diagrams are a collection of directed graphs used to classify root systems of simple Lie algebras corresponding to the Lie groups and $\kk$-species with finitely-many isomorphism classes of indecomposable representations. Cluster algebras of finite type are specifically those whose cluster variables correspond to the root system of a Dynkin diagram. As an example, see the following two inﬁnite families of Dynkin diagrams, which are also known as simply-laced Dynkin diagrams.

 \begin{align*}
      \vspace{2cm}   \mathbb{A}_n \quad \xymatrix@C=2ex@R=3ex{
    \bullet &
    \bullet \ar[l] &
    \cdots\ar[l] &\bullet\ar[l] &\bullet \ar[l] &
   \bullet\ar[l]  &\bullet\ar[l] & 
    }
        &&
         \mathbb{D}_n \quad \xymatrix@C=2ex@R=3ex{
    \bullet \\
    \bullet \ar[u] \ar[d] &
    \bullet \ar[l] &
    \cdots\ar[l] &\bullet\ar[l] &\bullet \ar[l] &
   \bullet\ar[l]  &\bullet\ar[l] & 
   \\
  \bullet}
    \end{align*}

\subsection{Cluster categories}\label{section: cluster category}
In this section, we quickly recall some classical results on cluster categories, in particular path and mesh categories, and we refer to \cite{buan2006tilting, buan2007cluster, buan2009cluster} for proofs and more details. Cluster categories, are introduced in \cite{buan2006tilting} to provide a combinatorial model for cluster algebras. The results of this section may be stated for any finite-dimensional hereditary algebra over an algebraically closed field; see e.g.~\cite{buan2006tilting, buan2007cluster}.
However, we only state them for a path algebra, as that is sufficient for our purposes.

Let $\Bbbk Q$ be a finite-dimensional path algebra over a field $\kk$ and denote by $\mD = D^b(\Bbbk Q)$ the bounded derived category of finitely-generated $\Bbbk Q$-modules with shift functor $[1]$. If $\Bbbk Q$ is of finite representation type,
then the category $\mD$ depends only on the underlying undirected graph $\Delta$ of the quiver of $\Bbbk Q$ and $\Delta$ is a simply-laced quiver, i.e.~there is at most one edge between any pair of vertices. 
In this case there is a combinatorial description that uses the theory of translation quivers, introduced in \cite{riedtmann1980algebren}.

\medskip
\noindent{\bf Stable translation quivers.} A \emph{stable translation quiver} is a pair $(\Gamma,\tau)$ with a quiver $\Gamma=(\Gamma_0,\Gamma_1)$ without loops, where $\Gamma_0$ is the set of vertices and $\Gamma_1$ is the set of arrows, and $\tau$ is a bijection on $\Gamma_0$ called \emph{translation} such that for any $x,y \in \Gamma_0$, the number of arrows from $y$ to $x$ is equal to the number of arrows from $\tau x$ to $y$. Given a stable translation quiver $(\Gamma,\tau)$, there exists a bijection $\sigma:\Gamma_1 \to \Gamma_1$ such that every arrow $\alpha:y \to x \in \Gamma_1$ is mapped to the arrow $\sigma(\alpha): \tau x \to y$. Such a bijection is called \emph{polarization}, which is unique if $\Gamma$ has no multiple edges. 

\medskip

\noindent{\bf Path algebras.} Path algebras are important families of algebras associated with quivers. The basis elements of the associated path algebra $\mathbb{k}Q$ of a given finite quiver $Q$ are corresponding to the paths in $Q$. For example, the basis elements of the quivers $\mathbb{D}_4$ and $\mathbb{D}_5$ below are:
\begin{align*}
  \mathcal{B}(\mathcal{P}(\mathbb{D}_4))&=\{e_1,e_2,e_3,e_4,\alpha,\beta,\gamma, \gamma\alpha, \gamma\beta\}
\\
  \mathcal{B}(\mathcal{P}(\mathbb{D}_5))&=\{e_1,e_2,e_3,e_4,e_5,\alpha,\beta,\gamma,\sigma, \gamma\alpha, \gamma\beta, \sigma\gamma,\sigma\gamma\alpha, \sigma\gamma\beta\}.
\end{align*}
Here $e_i$ represents the trivial path on the vertex $i$. Then
the multiplication in $\mathbb{k}Q$ is induced by the composition of paths. For example, $\gamma\cdot \alpha=\gamma\alpha$,  $\beta\cdot \alpha=0$, $\gamma\cdot e_4=e_4$
and  $e_3\cdot \gamma=0$.
 \begin{align*}
        \xymatrix@C=2ex@R=3ex{1 \\ 3\ar[u]^{\alpha} \ar[d]_{\beta} & 4  \ar[l]_{\gamma}  \\ 2}
        &&
         \xymatrix@C=2ex@R=3ex{1 \\ 3\ar[u]^{\alpha} \ar[d]_{\beta} & 4 \ar[l]_{\gamma} & 5 \ar[l]_{\sigma}  \\ 2}
    \end{align*}

\medskip

\noindent{\bf Auslander-Reiten (AR) quivers.} For any finite-dimensional $\mathbb{k}$-algebra, for instance the path algebras of Dynkin quivers, we have an associated
AR-quiver, whose vertices correspond to the isomorphism classes of the indecomposable modules, and the arrows represent the irreducible maps.
For example, for the path algebra $\mathbb{k}Q$ of the
quiver $\mathbb{D}_5$, we have the following AR-quiver. We denote by $P_i$, $I_i$ and $S_i$ for the projective, injective and simple representation corresponding to vertex $i$, respectively. The modules are described via their composition series and each simple module $S_i$ is written as $i$. For instance, $32$ is an indecomposable module $M$ with a simple submodule $S_3$ and $S_2$ such that $M/S_3 = S_2$.
The module $43^21$ is 1-dimensional at $4$ and $1$, and it is 2-dimensional at $3$.

\begin{figure}[h]
    \centering
    \input{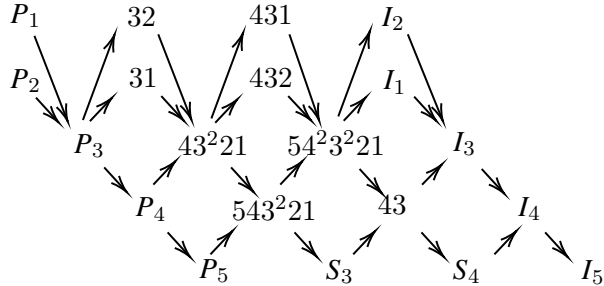}
    \caption{The Auslander-Reiten quiver of the path algebra of type $\mathbb{D}_5$.
}
    \label{AR-quiver D_5}
\end{figure}\medskip
\noindent{\bf Path categories.} The \emph{path category} of $(\Gamma,\tau)$ is the category whose objects are given by the vertices in $\Gamma_0$ and, for any $x,y\in \Gamma_0$, the space of morphisms from $x$ to $y$ is the $\kk$-vector space whose bases are the paths from $x$ to $y$. The \emph{mesh category} $\mM(\Gamma,\tau)$ is the quotient of the path category of $(\Gamma,\tau)$ via the \emph{mesh ideal}, that is, the ideal generated by the \emph{mesh relations}:
\[ m_x = \sum_{\alpha:y\to x}\sigma(\alpha)\alpha. \]
Given a quiver $Q$, we can construct a stable translation quiver $(\ZZ Q , \tau)$ as follows: $(\ZZ Q)_0=\ZZ \times Q_0$ and the number of arrows in $\ZZ Q$ from $(i,x)$ to $(j,y)$ equals the number of arrows in $Q$ from $x$ to $y$ if $i=j$, the number of arrows in $Q$ from $y$ to $x$ if $j=i+1$ and $0$ otherwise. The translation $\tau$ is defined by $\tau((i,x))=(i-1,x)$. The corresponding mesh category is denoted by $\kk(\ZZ Q)$.

\medskip

We recall the following two propositions from \cite{buan2006tilting, buan2007cluster}.
However, we only state them for a path algebra, as that is sufficient for our purposes.
\begin{proposition} \label{prop: properties mD}
Let $Q$ be a simply laced Dynkin quiver. Then the following hold:
\begin{enumerate}[label={\rm (\alph*)}]
    \item for any quiver $Q'$ of the same Dynkin type as $Q$, the derived categories $D^b \kk Q$ and $D^b \kk Q'$ are equivalent;
    \item the Auslander--Reiten quiver of $\mD= D^b \kk Q$ is $\ZZ Q$;
    \item the category $\ind \mD$ is equivalent to the mesh category $\kk(\ZZ Q)$.
\end{enumerate}
\end{proposition}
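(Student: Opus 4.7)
The proposition is classical (from Happel's work and summarised in \cite{buan2006tilting, buan2007cluster}), so my plan is to recall the three standard ingredients and explain how they combine, rather than derive anything genuinely new.

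For part (a), the plan is to use BGP reflection functors, equivalently APR tilting. Any two orientations of a fixed simply-laced Dynkin diagram $\Delta$ can be connected by a finite sequence of sink/source reflections, so it suffices to show that a single reflection at a sink $i$ of $Q$ produces a derived equivalence $D^b(\kk Q) \simeq D^b(\kk Q')$, where $Q'$ is $Q$ with the arrows at $i$ reversed. First I would assemble the APR tilting module $T = (\bigoplus_{j \neq i} P_j) \oplus \tau^{-1} P_i$ over $\kk Q$, check that $\End_{\kk Q}(T)^{\mathrm{op}} \cong \kk Q'$ by a direct computation of Homs between indecomposable projectives, and then invoke Happel's theorem that a tilting module induces a triangle equivalence $\mathrm{RHom}_{\kk Q}(T,-) : D^b(\kk Q) \xrightarrow{\sim} D^b(\End_{\kk Q}(T)^{\mathrm{op}})$. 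Iterating gives (a).

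For part (b), I would exploit hereditariness: since $\mathrm{gl.dim}(\kk Q) \leq 1$, every object of $\mD$ is (non-canonically) isomorphic to a direct sum of shifts of its cohomology, so $\ind\mD = \bigsqcup_{n\in\ZZ} \ind(\mathrm{mod}\,\kk Q)[n]$. Next, I would exhibit a Serre functor on $\mD$ and define the AR translation as $\tau = S\circ[-1]$; this exists because $\kk Q$ has finite global dimension, and it agrees with the usual $\tau$ on $\mathrm{mod}\,\kk Q$. Gabriel's theorem, together with the description of AR sequences in $\mathrm{mod}\,\kk Q$, identifies the AR quiver of $\mathrm{mod}\,\kk Q$ with a fundamental domain for $\tau$ inside $\ZZ Q$. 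Finally, since $\tau$ is an autoequivalence of $\mD$ that permutes the indecomposables without fixed points, the disjoint union of the shifts $\mathrm{mod}\,\kk Q[n]$ is stitched together by $\tau$ and by irreducible maps that cross the shifts; one verifies the crossing arrows are exactly those prescribed by the $\ZZ$-construction, giving $\mathrm{AR}(\mD) = \ZZ Q$.

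For part (c), once (b) is in hand, the universal property of the mesh category takes over. I would construct a $\kk$-linear functor $F : \kk(\ZZ Q) \to \ind\mD$ by sending each vertex to its corresponding indecomposable and each arrow to a chosen irreducible morphism; the mesh relations hold in the image because each mesh is the middle term sequence of an AR triangle in $\mD$, whose alternating sum of compositions vanishes by the defining property of almost split triangles. Faithfulness and fullness of $F$ then follow from comparing $\Hom$-dimensions: on the mesh side these are explicitly computable from paths modulo meshes in $\ZZ\Delta$, and on the $\mD$-side they are given by the well-known Dynkin $\Hom$-formulas (or equivalently by Riedtmann's knitting algorithm), so the two match vertex-by-vertex. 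The main technical obstacle, and the step I would be most careful with, is verifying that the mesh relations generate \emph{all} relations in $\ind\mD$; this is the heart of Riedtmann's structure theorem and in the simply-laced Dynkin setting it reduces to the absence of multiple edges (making the polarization unique) together with the fact that $\mathrm{mod}\,\kk Q$ is standard, so I would either invoke Riedtmann's theorem directly or perform the dimension comparison above to close the argument.
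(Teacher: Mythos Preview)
The paper does not prove this proposition at all: it is explicitly stated as being \emph{recalled} from \cite{buan2006tilting, buan2007cluster} without proof, as part of the background material in Section~\ref{sec:cluster_category}. Your outline is a correct sketch of the classical arguments (BGP/APR tilting for (a), hereditariness plus Gabriel for (b), Riedtmann's structure theorem for (c)), and indeed you correctly identify the provenance in your opening sentence; but there is nothing to compare your approach against, since the paper simply cites the result.
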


We are interested in the functor $F = \tau^{-1}[1]$, where $\tau$ is the AR-translation.
It is possible to prove that the category $\mD /F$ is well-defined, the objects are $F$-orbits of objects in $\mD$ and the morphisms are given by
\[ \Hom_{\mD/F}(\Tilde{X}, \Tilde{Y}) = \bigsqcup_{i \in \ZZ} \Hom_\mD(F^i X,Y), \]
where $\Tilde{X}, \Tilde{Y}$ denote the objects in $\mD/F$ corresponding to the objects $X,Y$ in $\mD$.

\begin{proposition}
    The category $\mD/F$ satisfies the following properties:
    \begin{enumerate}[label=(\alph*)]
        \item[{\rm (a)}] $\mD/F$ is a triangulated Krull--Remak--Schmidt category;
        \item[{\rm (b)}] $\mD /F$ has almost split triangles induced by those in $\mD$ and the Auslander--Reiten quiver of $\mD /F$ is $\Gamma(\mD)/\varphi(F)$, where $\varphi(F)$ is the graph automorphism induced by $F$.
    \end{enumerate}
\end{proposition}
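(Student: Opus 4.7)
My overall strategy is to invoke Keller's general construction of triangulated orbit categories and then track how the Auslander--Reiten theory descends, exploiting the fact that in Dynkin type $F = \tau^{-1}[1]$ acts freely and nicely on the AR quiver $\ZZ Q$ of $\mD$.

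\textbf{Part (a), triangulated structure.} The first step is to verify the hypotheses of Keller's theorem on orbit categories: $F$ is a triangle auto-equivalence of $\mD$ (both $\tau^{-1}$ and $[1]$ are), and, since $Q$ is simply-laced Dynkin, the action of $F$ on isomorphism classes of indecomposables of $\mD$ is free and has orbits that meet any given bounded $\Hom$-finite region only finitely often. Under these conditions, Keller constructs a canonical triangulated structure on $\mD/F$ such that the projection $\pi: \mD \to \mD/F$ is triangulated; the shift is induced from $[1]$ on $\mD$, and distinguished triangles are those isomorphic to images of distinguished triangles of $\mD$. I would simply cite this construction and then check the residual properties.

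\textbf{Krull--Remak--Schmidt.} Given an indecomposable $X \in \mD$, I would compute
\[ \End_{\mD/F}(\widetilde{X}) \;=\; \bigoplus_{i \in \ZZ} \Hom_\mD(F^i X, X), \]
and use that for $i \neq 0$ we have $F^i X \not\cong X$ (free orbit) so each non-identity summand lands in the radical, while $\End_\mD(X)$ is local. Hence $\End_{\mD/F}(\widetilde{X})$ is local and $\widetilde{X}$ remains indecomposable in $\mD/F$. Combined with the fact that every object of $\mD/F$ is a finite direct sum of such $\widetilde{X}$ (since objects of $\mD$ are, and $\pi$ is additive and essentially surjective), this gives the Krull--Remak--Schmidt property.

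\textbf{Part (b), almost split triangles and AR quiver.} Given an almost split triangle $\tau Z \to Y \to Z \to \tau Z [1]$ in $\mD$, I would apply $\pi$ and argue that the resulting triangle in $\mD/F$ is almost split. The key point is the computation $\Hom_{\mD/F}(\widetilde W, \widetilde Z) = \bigoplus_i \Hom_\mD(F^i W, Z)$, together with the freeness of the $F$-action on indecomposables: any non-split map $\widetilde W \to \widetilde Z$ in $\mD/F$ with $\widetilde W$ indecomposable comes from a finite sum of non-isomorphisms $F^i W \to Z$ in $\mD$, each of which factors through $Y$ by the universal property in $\mD$; then summing the factorisations and projecting recovers the factorisation in the quotient. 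The AR quiver statement then follows formally: irreducible maps in $\mD/F$ correspond to the images of irreducible maps in $\mD$, the graph automorphism $\varphi(F)$ of $\Gamma(\mD) = \ZZ Q$ is induced by $F$, and the orbit description of $\Hom$-spaces identifies the arrows of $\Gamma(\mD/F)$ with $\varphi(F)$-orbits of arrows.

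\textbf{Main obstacle.} The conceptually non-trivial step is the triangulated structure of $\mD/F$: orbit categories are \emph{not} triangulated in general, and the result depends essentially on Keller's dg-enhancement argument. The rest of the proof, once freeness of the $F$-action on indecomposables is established, reduces to formal manipulations with the orbit $\Hom$-formula and the standard behaviour of AR triangles under nice quotient functors.
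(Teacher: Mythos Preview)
The paper does not actually prove this proposition: it is stated as a recalled result from \cite{buan2006tilting, buan2007cluster}, with no argument given. Your proof plan is correct and follows what has become the standard route---invoking Keller's theorem on triangulated orbit categories for part (a), then using the orbit $\Hom$-formula and freeness of the $F$-action on indecomposables to descend the AR theory for part (b). The original BMRRT argument for the triangulated structure was more hands-on (predating or contemporaneous with Keller's general result), but your approach via Keller is cleaner and is the one most expositions now adopt. One minor point: you assert freeness of the $F$-action on indecomposables as a consequence of $Q$ being simply-laced Dynkin; this is true, but in a full write-up you would want to justify it explicitly (e.g.\ via the explicit description of $F$ on the translation quiver $\ZZ Q$).
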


\noindent{\bf Cluster category.}
Let $\Bbbk Q$ be a finite-dimensional path algebra, $\mD = D^b(\Bbbk Q)$ the bounded derived category of finitely-generated $\Bbbk Q$-modules with shift functor $[1]$ and $F=\tau^{-1}[1]$. The category $\mC = \mD/F$ is called \emph{cluster category}.

If $\Bbbk Q$ is of finite representation type,
then $\mD$ only depends on the underlying undirected graph
of the quiver of $\Bbbk Q$. Hence, the same is true for the category $\mC$ which leads to a combinatorial description of $\ind \mC$.
More precisely, let $\mS = \ind(({\rm mod}\ \Bbbk Q) \vee (\Bbbk Q[1]) )$ be the set of the indecomposable $\Bbbk Q$-modules, together with the objects $P[1]$, where $P$ is an indecomposable projective $\Bbbk Q$-module. Then $\mS$ is a fundamental domain for the action of $F$ on $\ind \mD$, containing exactly one representative from each $F$-orbit on $\ind \mD$.

\subsection{Tilting theory in cluster categories}\label{sec:tilting theory}

Cluster categories were introduced in \cite{buan2006tilting} to give a new, more general setting for the theory of tilting modules. We will now briefly recall this theory and the relations with cluster algebras.

Let $\Bbbk Q$ be a finite-dimensional path algebra $\mD=D^b(\Bbbk Q)$ and $F=\tau^{-1}[1]$ the functor defined as in \Cref{section: cluster category}. We are interested in studying $\Ext$-configurations, which were introduce in \cite{buan2006tilting} as analogues of $\Hom$-configurations.

\begin{definition}
A subset $\mT$ of non-isomorphic indecomposable objects in $\mD$ or $\mC$ is an \emph{$\Ext$-configuration} if:
    \begin{enumerate}[label=(\roman*)]
        \item $\Ext^1(X,Y)=0$ for all $X$ and $Y$ in $\mT$;
        \item for any indecomposable $Z \not\in \mT$ there is some $X \in \mT$, such that $\Ext^1(X,Z)=0$.
    \end{enumerate}
\end{definition}

The connection between $\Ext$-configurations in $\mD$ and in $\mC$ is given as follows.
\begin{proposition}
Let $\mT$ be an $\Ext$-configuration in $\mD$. Then $\Tilde{\mT} = \{ \Tilde{X} \mid X \in \mT\}$ is an $\Ext$-configuration in $\mC = \mD /F$.
\end{proposition}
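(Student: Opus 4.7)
The plan is to reduce both conditions to the standard Ext-formula for cluster categories of hereditary algebras, namely
\[
\Ext^1_{\mC}(\widetilde{X}, \widetilde{Y}) \;\cong\; \Ext^1_{\mD}(X,Y) \,\oplus\, D\Ext^1_{\mD}(Y,X),
\]
where $D$ denotes $\kk$-linear duality. Once this isomorphism is in hand, both (i) and (ii) become immediate translations of the corresponding statements in $\mD$, so most of the work is in securing the formula itself.

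To derive the formula, I would start from the Hom description already recalled in the text,
\[
\Hom_{\mD/F}(\widetilde{X},\widetilde{Y}) = \bigsqcup_{i\in\ZZ}\Hom_{\mD}(F^i X,Y),
\]
applied to $Y[1]$ in place of $Y$, so that $\Ext^1_{\mC}(\widetilde{X},\widetilde{Y})$ unpacks as a sum of groups $\Hom_\mD(\tau^{-i}X[i], Y[1])$. The $i=0$ summand is $\Ext^1_\mD(X,Y)$ on the nose. For the $i=1$ summand I would rewrite $\Hom_\mD(\tau^{-1}X[1],Y[1])=\Hom_\mD(\tau^{-1}X,Y)$ and then apply Auslander--Reiten/Serre duality for the Serre functor $\mathbb{S}=\tau[1]$ to obtain $D\Ext^1_\mD(Y,X)$. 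All remaining terms vanish: because $\kk Q$ is hereditary of Dynkin type, $\Hom_\mD(M[a],N[b])=0$ unless $b-a\in\{0,1\}$ for indecomposable $M,N\in\mathrm{mod}\,\kk Q$, and an inspection of how $F=\tau^{-1}[1]$ moves things across the fundamental domain $\mS$ rules out all other indices $i$.

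Granted the formula, condition (i) is immediate: if $X,Y\in\mT$, then both $\Ext^1_\mD(X,Y)$ and $\Ext^1_\mD(Y,X)$ vanish because $\mT$ is an $\Ext$-configuration in $\mD$, hence $\Ext^1_{\mC}(\widetilde{X},\widetilde{Y})=0$. For condition (ii), given an indecomposable $\widetilde{Z}\notin\widetilde{\mT}$, every $F$-orbit representative $Z\in\ind\mD$ satisfies $Z\notin\mT$; the maximality condition for $\mT$ in $\mD$ then produces some $X\in\mT$ with $\Ext^1_\mD(X,Z)\neq 0$ (reading the defining condition in the usual way, as non-vanishing), and the formula upgrades this to $\Ext^1_{\mC}(\widetilde{X},\widetilde{Z})\neq 0$.

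The main obstacle is bookkeeping in the verification of the Ext-formula: one has to track indecomposable objects in $\mD$ as shifts of modules, identify where $\tau^{-i}X[i]$ lives relative to the fundamental domain $\mS=\ind((\mathrm{mod}\,\kk Q)\vee(\kk Q[1]))$, and confirm that only the two indices $i=0,1$ contribute. Once the reduction is done, choosing representatives $X\in\mS$ (so that $\widetilde{Z}\notin\widetilde{\mT}$ really forces $Z\notin\mT$) is the only subtlety left, and it is handled exactly as in the finite acyclic setting of \cite{buan2006tilting}.
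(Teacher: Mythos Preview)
The paper does not give its own proof of this proposition; it is recalled from \cite{buan2006tilting}, and your approach is precisely the one taken there.

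There is one gap in your treatment of condition~(i). The two-term formula $\Ext^1_{\mC}(\widetilde X,\widetilde Y)\cong \Ext^1_\mD(X,Y)\oplus D\Ext^1_\mD(Y,X)$, as you derive it, holds only for representatives $X,Y\in\mS$: the vanishing of the $i\neq 0,1$ summands relies on this placement. But an Ext-configuration $\mT\subset\ind\mD$ need not lie in $\mS$, so you cannot apply the formula directly with $X,Y\in\mT$. The missing step is $F$-stability of $\mT$ (that is, $F\mT=\mT$), which guarantees that every $\widetilde X\in\widetilde\mT$ has a representative in $\mT\cap\mS$. Stability follows from the same Serre-duality identity $\Ext^1_\mD(FA,B)\cong D\Ext^1_\mD(B,A)$ combined with maximality: if $F^{-1}X\notin\mT$ for some $X\in\mT$, then condition~(ii) produces $W\in\mT$ with $\Ext^1_\mD(W,F^{-1}X)\neq 0$, yet this equals $\Ext^1_\mD(FW,X)\cong D\Ext^1_\mD(X,W)=0$; hence $F^{-1}\mT\subseteq\mT$, and iterating gives the other inclusion. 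You flag the representative issue only for condition~(ii), but it is in~(i) that it actually bites.

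Your argument for condition~(ii) is fine and does not even require the two-term formula: $\Ext^1_\mD(X,Z)$ is the $i=0$ summand of $\Ext^1_{\mC}(\widetilde X,\widetilde Z)=\bigoplus_i\Ext^1_\mD(F^iX,Z)$ for \emph{any} choice of representatives, so its nonvanishing already forces $\Ext^1_{\mC}(\widetilde X,\widetilde Z)\neq 0$. You are also right that condition~(ii) as printed should read $\Ext^1(X,Z)\neq 0$.
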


We are interested in studying $\Ext$-configurations because they are closely related to the tilting theory of path algebras.
\begin{definition}
    Let $\Bbbk Q$ be a path algebra. A $\Bbbk Q$-module $T$ is called \emph{rigid} if $\Ext(T,T)=0$.
    Additionally, $T$ is said to be a \emph{tilting module} if one of the following equivalent conditions is satisfied:
    \begin{enumerate}[label=(\roman*)]
        \item $T$ is rigid, and there is an exact sequence $0 \to H \to T_0 \to T_1 \to 0$ with $T_0,T_1$ in $\add T$;
        \item $T$ is rigid and has $n$ non-isomorphic indecomposable direct summands, where $n$ is the number of non-isomorphic simple modules;
        \item $T$ is rigid and has a maximum number of non-isomorphic indecomposable direct summands.
    \end{enumerate}
    A tilting module is said to be \emph{basic} if all of its direct summands are non-isomorphic.
    A \emph{tilting set} is a set of non-isomorphic indecomposable objects $\mT$ in $\mD$ or $\mC$ such that it is a rigid set, that is $\Ext^1(T,T')=0$ for every $T,T' \in \mT$, and it is maximal with this property. Finally, an object $T$ in $\mC$ is a \emph{tilting object} if $\Ext^1_\mC(T,T)=0$ and $T$ has a maximal number of non-isomorphic direct summands.
\end{definition}

In the cluster category $\mC$, the following relation between $\Ext$-configurations and tilting sets holds. Note that in general this is not true in the category $\mD$.
\begin{proposition}
    Let $\Tilde{\mT}$ be a set of non-isomorphic objects in $\ind \mC$. Then $\Tilde{\mT}$ is a tilting set if and only if it is an $\Ext$-configuration.
\end{proposition}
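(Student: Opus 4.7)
The plan is to prove both implications separately, with the key tool being the 2-Calabi-Yau structure that the cluster category $\mC = \mD/F$ inherits from the construction with $F = \tau^{-1}[1]$. This yields a functorial duality $\Ext^1_\mC(X,Y) \cong D\Ext^1_\mC(Y,X)$, which makes the rigidity condition $\Ext^1 = 0$ symmetric in its two arguments; this symmetry is what will allow conditions phrased with $Z$ in one slot to be converted into conditions with $Z$ in the other.

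The implication $\Ext$-configuration $\Rightarrow$ tilting set is essentially formal. Condition (i) is already the rigidity needed in the definition of a tilting set, so I only need to check maximality. Given any indecomposable $Z \notin \Tilde{\mT}$, I would invoke condition (ii) (read, in line with the standard literature on $\Ext$-configurations, as producing an $X \in \Tilde{\mT}$ with nontrivial $\Ext^1$ against $Z$) to exhibit a witness showing $\Tilde{\mT} \cup \{Z\}$ is not rigid. Hence $\Tilde{\mT}$ cannot be properly extended while preserving rigidity, which is the maximality clause.

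For the converse, tilting set $\Rightarrow$ $\Ext$-configuration, condition (i) is again rigidity. For condition (ii), fix an indecomposable $Z \notin \Tilde{\mT}$: by maximality $\Tilde{\mT} \cup \{Z\}$ is not rigid, so some pair $(T_1, T_2)$ drawn from $\Tilde{\mT} \cup \{Z\}$ has $\Ext^1_\mC(T_1, T_2) \neq 0$. Since $\Tilde{\mT}$ alone is rigid, at least one of $T_1, T_2$ equals $Z$, and the 2-Calabi-Yau duality converts a nontrivial $\Ext^1_\mC(Z, X)$ into a nontrivial $\Ext^1_\mC(X, Z)$, producing the required $X \in \Tilde{\mT}$.

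The main obstacle is the self-extension case $\Ext^1_\mC(Z, Z) \neq 0$, where the only offending pair breaking rigidity of $\Tilde{\mT} \cup \{Z\}$ could be $(Z, Z)$ itself, which does not yield an $X \in \Tilde{\mT}$. Ruling this out requires the finite representation type hypothesis in force throughout the subsection: by Proposition~\ref{prop: properties mD}, the indecomposables of $\mD$ are described by the mesh category $\kk(\ZZ Q)$, and after passing to the orbit category $\mD/F$ they are identified with the almost positive roots of the underlying Dynkin diagram, all of which are rigid. With self-extensions excluded, the case analysis in the previous paragraph is complete, and the equivalence follows in the spirit of the original argument in \cite{buan2006tilting}.
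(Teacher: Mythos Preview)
The paper does not supply a proof of this proposition; it is stated as a recalled result from \cite{buan2006tilting}, so there is nothing in the paper to compare against directly.

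Your argument is essentially correct under the Dynkin hypothesis you invoke, and you have correctly isolated the only nontrivial point, namely the self-extension case $\Ext^1_\mC(Z,Z)\neq 0$. However, your claim that finite representation type is ``in force throughout the subsection'' is not accurate: Section~2.3 begins with an arbitrary finite-dimensional path algebra $\Bbbk Q$, and only later specializes to Dynkin type. For a non-Dynkin $Q$ there exist non-rigid indecomposables in $\mC$, and for such a $Z$ maximality of $\Tilde{\mT}$ gives no information whatsoever---adding $Z$ already fails rigidity via the pair $(Z,Z)$, so you cannot conclude that some $X\in\Tilde{\mT}$ has $\Ext^1_\mC(X,Z)\neq 0$. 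Closing this gap in general requires the substantive result from \cite{buan2006tilting} that maximal rigid objects in $\mC$ are in fact cluster-tilting (equivalently, that they arise from tilting modules over some derived-equivalent hereditary algebra), which is considerably deeper than the 2-Calabi--Yau symmetry alone. So your proof is complete in the Dynkin case, which suffices for the paper's later purposes, but not for the proposition as stated.
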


Moreover, there is a clear relation between tilting sets, and hence $\Ext$-configurations, in $\mC$ and basic tilting modules over some hereditary algebra equivalent to $H$.

\begin{theorem}
Let $Q$ be a quiver whose path algebra $\Bbbk Q$ has $n$ simple representations.
Let $T$ be a basic tilting object in $\mC = D^b(\Bbbk Q) /F$.
Then:
\begin{enumerate}[label={\rm(\alph*)}]
    \item $T$ is induced by a basic tilting module over a hereditary algebra $H'$, derived equivalent to $\mathbb{k}Q$;
    \item  $T$ has $n$ indecomposable direct summands.
\end{enumerate}
Moreover, any basic tilting module over a hereditary algebra $H$ induces a basic tilting object for $\mC = D^b(\mathbb{k}Q) /F$.
\end{theorem}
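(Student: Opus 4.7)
The plan is to lift the basic tilting object $T$ from $\mC$ into the derived category $\mD$, land inside the fundamental domain $\mS$, and then pass to a hereditary algebra derived-equivalent to $\Bbbk Q$ under which the lift becomes an honest basic tilting module. Using the fact recalled just above, that $\mS = \ind((\mathrm{mod}\,\Bbbk Q)\vee(\Bbbk Q[1]))$ contains exactly one representative of each $F$-orbit in $\ind\mD$, each indecomposable summand $\tilde T_i$ of $T$ lifts uniquely to some $T_i\in\mS$, and we set $\widehat T = \bigoplus_i T_i$. Since $\Hom_{\mC}(\tilde X,\tilde Y)$ contains $\Hom_{\mD}(X,Y)$ as the $i=0$ direct summand in the orbit formula, rigidity of $T$ in $\mC$ forces $\widehat T$ to be rigid in $\mD$; in particular $\Ext^1_{\mD}(T_i,T_j)=0$ for all $i,j$.

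Next I would eliminate any shifted summands by iterated APR-reflection. Whenever some $T_i = P_i[1]$, the vertex $i$ is a sink of the current quiver, and reflecting at this sink produces a path algebra $\Bbbk Q'$ with $D^b(\Bbbk Q')\simeq D^b(\Bbbk Q)$ such that $P_i[1]$ is identified with the simple at the new source of $Q'$, now an honest $\Bbbk Q'$-module. Because $Q$ is Dynkin and $\widehat T$ has finitely many summands, iteration terminates at a hereditary algebra $H'$ derived equivalent to $\Bbbk Q$ for which the image $T'$ of $\widehat T$ lies entirely in $\mathrm{mod}\,H'$. Rigidity is preserved by derived equivalence, so $T'$ is a rigid $H'$-module.

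To conclude (a) and (b) I would invoke the classical fact that a rigid module over a hereditary algebra with $n$ simples has at most $n$ non-isomorphic indecomposable direct summands, with equality if and only if it is a basic tilting module. Combined with the preceding proposition identifying $\Ext$-configurations in $\mC$ with tilting sets, maximality of $T$ as a tilting object forces $T'$ to have exactly $n$ summands, so $T'$ is a basic tilting $H'$-module inducing $T$. For the \emph{Moreover} statement, a derived equivalence $D^b(H)\simeq D^b(\Bbbk Q)$ sends a basic tilting $H$-module to a rigid object of $\mD$ lying in a fundamental domain for $F$, and the proposition transporting $\Ext$-configurations from $\mD$ to $\mC$ shows its image in $\mC$ is a tilting object. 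The subtlest point will be the termination and correctness of the reflection procedure together with verifying that maximality descends from $\mC$ to $H'$ — this is exactly where the finite representation type hypothesis is essential, since it bounds the length of the reflection sequence and rules out the appearance of extra rigid orbits in $\mC$ beyond those visible in $\mathrm{mod}\,H'$.
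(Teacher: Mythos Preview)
The paper does not prove this theorem: it is stated in the expository Section~2.3 as a classical result, with the reader referred to \cite{buan2006tilting, buan2007cluster} for proofs. So there is no ``paper's own proof'' to compare against; your proposal is essentially a sketch of the original argument from \cite{buan2006tilting}.

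Your outline is broadly correct and follows BMRRT: lift to the fundamental domain, reflect away the shifted projectives, then use the bound on rigid summands over a hereditary algebra. Two points deserve correction. First, the theorem as stated does \emph{not} assume $Q$ is Dynkin or of finite representation type, and neither is needed. Termination of the reflection procedure follows simply because there are at most $n$ shifted indecomposable projectives in the fundamental domain, so at most $n$ reflections are required; finite representation type plays no role here, and your closing remark that it is ``essential'' for termination and for ruling out extra rigid orbits is mistaken. Second, the assertion ``whenever some $T_i = P_i[1]$, the vertex $i$ is a sink'' is not automatic for a single such summand; what one actually shows (using rigidity in $\mC$, not merely in $\mD$) is that the set of vertices $i$ with $P_i[1]$ a summand can be ordered as an admissible sink sequence, so that successive APR reflections are possible. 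This is Lemma~3.2 (and surrounding discussion) in \cite{buan2006tilting}, and it is the one genuinely delicate step you have glossed over.
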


If $\Bbbk Q$ is the path algebra of a simply laced quiver $Q$ of Dynkin type, with underlying graph $\Delta$, and $\kk$ is an algebraically closed field, we want to describe the relation between the cluster algebra $\mA = \mA(\Delta)$ and the cluster category $\mC = D^b(\Bbbk Q) /F$. In particular, there is a bijective correspondence between the clusters and the basic tilting objects in $\mC$. 
This is a consequence of the more general result conjectured in \cite{buan2006tilting} and proved in \cite{buan2007cluster}, stating that if $H$ is a finite-dimensional hereditary algebra over an algebraically closed field $\kk$, then there is a bijection between the set of cluster variables of the cluster algebra $\mA$ and the set of indecomposable rigid objects in $\mC$.

\begin{theorem}\label{thm:path_algebra}
Let $Q$ be a quiver with the finite-dimensional path algebra $\mathbb{k}Q$ 
and associated cluster category $\mC$ and cluster algebra $\mA$. Then there is a bijection between the set of isomorphism classes of indecomposable rigid objects in $\mC$ and the set of cluster variables in $\mA$. This induces bijections between clusters and tilting objects.
\end{theorem}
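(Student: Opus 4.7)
The plan is to construct the bijection via a \emph{cluster character} (Caldero--Chapoton map) $X_{?} \colon \ind \mC \to \kk[u_1^{\pm 1}, \dots, u_n^{\pm 1}]$ that sends an indecomposable rigid object to a Laurent polynomial in an initial cluster, and to prove that this assignment matches the combinatorial mutation on the categorical side (exchange triangles in $\mC$) with the Fomin--Zelevinsky mutation on the algebraic side. Since by Proposition \ref{thm:main_result A body} (the tilting/$\Ext$-configuration correspondence, stated right before this theorem) tilting sets in $\mC$ are exactly $\Ext$-configurations of size $n$, establishing the bijection on indecomposables and its compatibility with exchange will automatically induce the second bijection between clusters and tilting objects.

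First I would fix an acyclic orientation of the underlying Dynkin diagram $\Delta$ to obtain $Q$, and anchor the correspondence at the initial seed: send the simple modules $S_i$ (resp.\ the shifted projectives $P_i[1]$) to the initial cluster variables $u_i$, so that the canonical tilting object $T_0 = \bigoplus_{i=1}^{n} P_i[1]$ in $\mC$ corresponds to the initial cluster $\mathbf{x}$. Next, I would verify that along any almost complete tilting object $\bar T$ there are exactly two complements $M, M^\ast$ in $\mC$, connected by a pair of exchange triangles
\[
M \to B \to M^\ast \to M[1], \qquad M^\ast \to B' \to M \to M^\ast[1],
\]
and that the corresponding Laurent expressions satisfy the binomial exchange relation $X_M \cdot X_{M^\ast} = X_B + X_{B'}$ that matches the cluster algebra mutation rule coming from the skew-symmetric matrix $B$ attached to the quiver of $\End_\mC(\bar T \oplus M)^{\mathrm{op}}$. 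This step uses Proposition \ref{prop: properties mD}(b) to realize almost split and exchange triangles inside the mesh category $\kk(\ZZ Q)$, and in Dynkin type one can compute $X_B, X_{B'}$ explicitly in terms of dimension vectors.

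With the initial anchoring and the one-step exchange compatibility in place, I would conclude by a standard connectedness / induction argument: the tilting graph of $\mC$ (vertices = basic tilting objects, edges = single-summand exchanges) is connected, as is the exchange graph of clusters in $\mA$, and the exchange step above gives a graph morphism that is locally bijective on vertices around each edge. Starting from the initial anchor and propagating, every tilting object is sent to a cluster, every indecomposable summand to a cluster variable, and the map is surjective because every cluster variable is obtained from $\mathbf{x}$ by iterated mutation. Injectivity follows because if two indecomposables $M, M'$ in $\mC$ gave the same Laurent polynomial, one could complete each to a tilting object and use the uniqueness of the complement along an almost complete tilting object together with the Laurent phenomenon and the positivity/denominator vector invariants to force $M \cong M'$.

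The main obstacle is the exchange-compatibility step: producing the two exchange triangles with the correct middle terms $B, B'$ in $\mC$, and matching $X_B + X_{B'}$ with the combinatorial mutation of the skew-symmetric matrix (including keeping track of signs of arrows in the Gabriel quiver of $\End_\mC(T)^{\mathrm{op}}$ after passing to the cluster-tilted algebra). For Dynkin $Q$ this is tractable since the middle terms can be read off from the mesh relations in $\kk(\ZZ Q)$ and the $F$-orbit structure, but it is where the genuine content of the theorem sits; the remaining steps are bookkeeping on a connected graph once exchange compatibility is secured.
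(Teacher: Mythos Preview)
The paper does not actually prove this theorem. Theorem~\ref{thm:path_algebra} sits in the expository Section~\ref{sec:tilting theory}, where the authors are recalling classical results; the sentence immediately preceding the statement explains that it is a consequence of the more general result conjectured in \cite{buan2006tilting} and proved in \cite{buan2007cluster}. There is therefore no proof in the paper to compare your proposal against.

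As a sketch of an independent proof, your cluster-character strategy is reasonable and is essentially the Caldero--Chapoton / Caldero--Keller route to this bijection, which is one of the standard proofs in the literature (though not precisely the argument in the cited \cite{buan2007cluster}). Two small points: your cross-reference to ``Proposition~\ref{thm:main_result A body}'' is wrong---that label points to Theorem~\ref{thm:main_result A body} about $\mathcal{C}(\mathbb{D}_{n,\infty})$, not the tilting/$\Ext$-configuration proposition you mean; and your anchoring is slightly muddled (it is the shifted projectives $P_i[1]$, not the simples, that should be identified with the initial cluster variables). The genuine content, as you correctly identify, is the exchange-compatibility step, and for that you would need to actually carry out the computation rather than gesture at the mesh category.
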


Given the existence of such a bijection, we can study the existence of a mutation of tilting modules by studying the corresponding operation in the cluster setting, which is the cluster mutation. We can formalize this by recalling some definitions from \cite{buan2006tilting}.

\begin{definition}
Let $Q$ be a quiver with the finite-dimensional path algebra $\Bbbk Q$.
A $\Bbbk Q$-module $\Bar{T}$ is said to be an \emph{almost complete basic tilting module} if it is basic, rigid, and has $n-1$ indecomposable direct summands. Then there exists an indecomposable module $M$ such that $\Bar{T} \bigsqcup M$ is a basic tilting module and $M$ is called \emph{complement} to $\Bar{T}$.
\end{definition}

We note that $\Bar{T}$ can be completed to a basic tilting module in at most two different ways \cite{riedtmann1990open, unger1990schur}. Moreover it can be done in exactly two ways if and only if $\Bar{T}$ is sincere \cite{happel1989almost}, that is each simple module appears as a composition factor of $\Bar{T}$.
We can extend this notion to the cluster category $\mC$ as follows.
\begin{definition}
    Let $Q$ be a quiver with the finite-dimensional path algebra $\mathbb{k}Q$ and associated cluster category $\mC$.
    A basic rigid object $\Bar{T}$ in $\mC$ is an \emph{almost complete basic tilting object} if there is an indecomposable object $M$ in $\mC$ such that $\Bar{T}\bigsqcup M$ is a basic tilting object.
\end{definition}
These objects in the cluster category $\mC$ observe a more regular behaviour, as they satisfy the following result.

\begin{theorem}
Let $Q$ be a quiver with the finite-dimensional path algebra $\mathbb{k}Q$ and associated cluster category $\mC$. 
Let $\Bar{T}$ an almost complete basic tilting object in $\mC$.
Then $\Bar{T}$ can be completed to a basic tilting object in $\mC$ in exactly two different ways.
\end{theorem}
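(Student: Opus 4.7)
The plan is to combine the bijection of Theorem~\ref{thm:path_algebra} with the exchange triangle construction in $\mC$ to produce a second complement, and then to establish uniqueness by a rigidity argument.

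\textbf{Existence.} Fix a complement $M$, so $T := \bar{T}\oplus M$ is a basic tilting object with exactly $n$ indecomposable summands (as noted in the preceding theorems on tilting objects in $\mC$). Because $\bar{T}$ has finitely many indecomposable summands, a minimal right $\add\bar{T}$-approximation $\pi\colon B\to M$ exists in $\mC$, and completing $\pi$ to a triangle yields
\[ M^{*}\to B\xrightarrow{\pi} M\to M^{*}[1]. \]
Applying $\Hom_{\mC}(\bar{T},-)$ and using rigidity of $T$ together with the approximation property of $\pi$ gives $\Ext^{1}_{\mC}(\bar{T},M^{*})=0$. The $2$-Calabi-Yau symmetry of $\mC$ (which is built into the choice $F=\tau^{-1}[1]$) then upgrades this to $\Ext^{1}_{\mC}(M^{*},\bar{T})=0$, and a further long-exact-sequence computation applied to the same triangle gives $\Ext^{1}_{\mC}(M^{*},M^{*})=0$. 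Indecomposability of $M^{*}$ and $M^{*}\not\cong M$ follow from minimality of $\pi$ and from the fact that $\pi$ cannot be a split epimorphism---otherwise $M\in\add\bar{T}$, contradicting $T$ having $n$ distinct indecomposable summands. Hence $\bar{T}\oplus M^{*}$ is a second basic tilting object.

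\textbf{At most two.} Let $N\not\cong M$ be another complement. The maximality of $n$-summand basic tilting objects forces $\bar{T}\oplus M\oplus N$ to fail rigidity, so $\Ext^{1}_{\mC}(M,N)\neq 0$ (the other direction is symmetric by $2$-Calabi-Yau). A non-split triangle realising such an extension has its middle term in $\add\bar{T}$: applying $\Hom_{\mC}(\bar{T},-)$ and $\Hom_{\mC}(-,\bar{T})$ and invoking $\Ext^{1}_{\mC}(\bar{T},N)=\Ext^{1}_{\mC}(M,\bar{T})=0$ kills every summand of the middle term lying outside $\add\bar{T}$. This identifies the triangle with the minimal right $\add\bar{T}$-approximation of $M$ constructed above, and uniqueness of such approximations up to indecomposable summands forces $N\cong M^{*}$.

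\textbf{Main obstacle.} The genuinely new phenomenon compared with the module category $\mathrm{mod}\,\Bbbk Q$ is that the classical Happel-Unger result gives two complements only for \emph{sincere} almost complete tilting modules; for non-sincere $\bar{T}$, the second complement materialises in $\mC$ as a shifted projective $P_{i}[1]$ in the fundamental domain $\mS$, corresponding to the simple $S_{i}$ missing as a composition factor. The main technical hurdle is to verify that the exchange triangle always produces an $M^{*}$ that is non-degenerate and lies in $\mS$ (possibly in the $\Bbbk Q[1]$ part), which is precisely where the $2$-Calabi-Yau structure of $\mC$---rather than only the module-theoretic input---enters in an essential way.
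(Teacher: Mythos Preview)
The paper does not supply its own proof of this statement: it appears in the expository Section~2.3 and is quoted from \cite{buan2006tilting} without argument. So there is no in-paper proof to compare against; your sketch is essentially the original Buan--Marsh--Reineke--Reiten--Todorov argument, and the existence half (construction of $M^*$ via the approximation triangle, rigidity via the long exact sequence and $2$-Calabi--Yau duality) is correct.

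In the uniqueness half one step is not justified as written. From the long exact sequences you obtain $\Ext^1_{\mC}(\bar T,E)=0=\Ext^1_{\mC}(E,\bar T)$ for the middle term $E$ of the non-split triangle $N\to E\to M\to N[1]$, but this only says $\bar T\oplus E$ is rigid; it does not by itself ``kill every summand of $E$ lying outside $\add\bar T$''. To force $E\in\add\bar T$ you must also use that $\bar T\oplus M$ is \emph{maximal} rigid: an indecomposable summand $X$ of $E$ outside $\add\bar T$ would yield a rigid object $\bar T\oplus M\oplus X$ (one checks $\Ext^1_{\mC}(M,E)=0$ by applying $\Hom_{\mC}(M,-)$ to the triangle and using that the connecting map $\Hom_{\mC}(M,M)\to\Ext^1_{\mC}(M,N)$ hits the chosen extension class---here one needs $\dim_\Bbbk\Ext^1_{\mC}(M,N)=1$, which is the content of the exchange-pair characterisation stated just after this theorem in the paper). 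This is a fillable gap, not a wrong approach; once $E\in\add\bar T$, your identification of the triangle with the minimal approximation triangle and the conclusion $N\cong M^*$ are correct.
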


It is interesting to study how the two complements of an almost complete basic tilting object are related in the cluster category $\mC$. We say that two non-isomorphic indecomposable objects in $\mC$ form an \emph{exchange pair} if they are the complements of the same almost complete basic tilting object. 

\medskip

Let $\Bar{T}$ be an almost complete basic tilting object in $\mC$ and let $M$ be a complement. It is possible to prove that $\End_\mC(M)$ is a division ring. Then we can consider the factor ring
\[ D_M = \End_\mC(M) / \rad_\mC(M,M). \]
The following description of exchange pairs hold.
\begin{theorem}\label{thm:exchane_pair}
Let $Q$ be a quiver with the finite-dimensional path algebra $\mathbb{k}Q$ 
and associated cluster category $\mC$.
Two indecomposable objects $M$ and $M^*$ in $\mC$ form an exchange pair if and only if \[\dim_{D_M}\Ext^1_\mC(M,M^*)=1 = \dim_{D_{M^*}}\Ext^1_\mC(M^*,M).\]
\end{theorem}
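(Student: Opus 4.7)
The plan is to prove both implications using exchange triangles in $\mC$ together with the $2$-Calabi--Yau property, which furnishes a functorial duality $\Ext^1_\mC(X,Y) \cong D\Ext^1_\mC(Y,X)$. Since $M$ and $M^*$ are indecomposable in the Krull--Remak--Schmidt category $\mC$, their endomorphism rings are local and so $D_M$ and $D_{M^*}$ are honest division rings. The $2$-CY duality matches up the two Ext-space dimension conditions as bimodule duals, so throughout one may concentrate on a single exchange triangle and then transfer.

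\emph{Forward direction.} Suppose $M, M^*$ are the two complements of an almost complete basic tilting object $\bar{T}$. Forming a minimal right $\add\bar{T}$-approximation of $M$ produces a non-split exchange triangle
\[ M^* \to B \to M \to M^*[1] \]
with $B \in \add\bar{T}$, and symmetrically $M \to B' \to M^* \to M[1]$. These represent nonzero classes in $\Ext^1_\mC(M,M^*)$ and $\Ext^1_\mC(M^*,M)$. To conclude $\dim_{D_M}\Ext^1_\mC(M,M^*) = 1$, I argue by contradiction: if two classes were $D_M$-linearly independent, then the cones of their corresponding triangles would produce indecomposable rigid objects, non-isomorphic to $M^*$, that complete $\bar T$ to a basic tilting object, contradicting the fact that an almost complete basic tilting object has exactly two complements.

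\emph{Backward direction.} Given a nonzero $\xi \in \Ext^1_\mC(M,M^*)$ (unique up to $D_M$-scaling by hypothesis), build the triangle $M^* \to B \to M \to M^*[1]$. Applying $\Hom_\mC(B,-)$, $\Hom_\mC(-,M)$ and $\Hom_\mC(-,M^*)$ to this triangle and its rotations, and using the $1$-dimensionality of the Ext spaces together with $2$-CY duality, one shows that the relevant connecting maps are isomorphisms, which forces $\Ext^1_\mC(B,B) = \Ext^1_\mC(B,M) = \Ext^1_\mC(B,M^*) = 0$. Letting $\bar T$ be the direct sum of the distinct indecomposable summands of $B$, both $\bar T \oplus M$ and $\bar T \oplus M^*$ are rigid; by Theorem~\ref{thm:path_algebra} and the count of indecomposable summands of a basic tilting object, they are in fact basic tilting objects, so $\bar T$ is almost complete with $M$ and $M^*$ among its complements.

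\emph{Main obstacle.} The delicate point is the backward direction: showing that the \emph{same} object $\bar T$ appears when one repeats the construction starting from a generator of $\Ext^1_\mC(M^*,M)$, and that $B$ carries exactly the right number of non-isomorphic indecomposable summands. The matching of the two exchange triangles follows from the $2$-CY duality, which identifies $\Ext^1_\mC(M,M^*) \cong D\Ext^1_\mC(M^*,M)$ compatibly with the $D_M$- and $D_{M^*}$-actions, so the two constructions produce $\add$-equivalent middle terms. The summand count is the most technical step: minimality of the approximation ensures $B$ has no repeated summands, and the rigidity/Ext-vanishing above together with maximality of tilting objects force $\bar T$ to have $n-1$ indecomposable summands. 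Once these are in place, the long exact Ext sequences and Krull--Remak--Schmidt assemble the result.
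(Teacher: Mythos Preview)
The paper does not prove this theorem; it is recalled from \cite{buan2006tilting} without proof, so there is no ``paper's own proof'' to compare against. I therefore assess your argument on its own merits.

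Your forward direction is not correct as written. You claim that if $\eta_1,\eta_2\in\Ext^1_\mC(M,M^*)$ were $D_M$-independent, then the middle terms of the two triangles $M^*\to B_i\to M\xrightarrow{\eta_i}M^*[1]$ would furnish new indecomposable complements of $\bar T$. But nothing forces $B_i$ to lie in $\add\bar T$, to be indecomposable, or to be a complement; the exchange triangle is singled out precisely because its middle term is a minimal $\add\bar T$-approximation, and an arbitrary extension has no such property. The standard argument instead applies $\Hom_\mC(M,-)$ to the exchange triangle $M^*\to B\xrightarrow{g} M\xrightarrow{h} M^*[1]$: since $B\in\add\bar T$ and $\bar T\oplus M$ is tilting, $\Ext^1_\mC(M,B)=0$, so $h_*\colon\End_\mC(M)\to\Ext^1_\mC(M,M^*)$ is surjective; non-splitness gives $\operatorname{Im}(g_*)\subseteq\rad\End_\mC(M)$, and one then identifies the quotient with $D_M$.

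Your backward direction has two genuine gaps. First, you never establish that $M$ and $M^*$ are rigid: for $\bar T\oplus M$ to be a tilting object you need $\Ext^1_\mC(M,M)=0$, and your long exact sequence from $\Hom_\mC(M,-)$ only gives $\Ext^1_\mC(M,B)\hookrightarrow\Ext^1_\mC(M,M)$, not the vanishing of either. In the original source this is handled by reducing to the module category of a hereditary algebra derived equivalent to $\Bbbk Q$ and invoking module-theoretic input, not by pure triangle-chasing in $\mC$. Second, your claim that $B$ has exactly $n-1$ non-isomorphic indecomposable summands appeals to ``minimality of the approximation'', but you constructed $B$ from an arbitrary nonzero extension class, not from an approximation, so there is no minimality to invoke; the summand count requires a separate argument (again typically via the lift to modules and a comparison with almost complete tilting modules there).
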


\begin{definition}\label{def:mutation}
    Let $Q$ be a quiver with the finite-dimensional path algebra $\mathbb{k}Q$ and associated cluster category $\mC$ and cluster algebra $\mA$.
    Let $\bar{T}$ be an almost complete basic tilting module and let $M,M^*$ be the complements.
    The act of replacing $M$ in $\bar{T}\bigsqcup M$ with $M^*$ to obtain $\bar{T}\bigsqcup M^*$ is called \emph{mutation}. 
\end{definition}

For types $\mathbb{A}_n$ and $\mathbb{D}_n$, mutation of of basic tilting objects can be modeled using triangulations of the $(n+3)$-gon or triangulations of the punctured $n$-gon, respectively \cite{caldero2006quivers,schiffler2008geometric}.
See Figures~\ref{fig:mut_A5} and \ref{fig:mut_D5} for an example of this mutation in the case of $\mathbb{A}_5$ and $\mathbb{D}_5$, respectively.

\begin{figure}[h]
\centering
    \tikzset{every picture/.style={line width=0.75pt}} 
\begin{tikzpicture}[x=0.75pt,y=0.75pt,yscale=-1,xscale=1]

\draw [color={rgb, 255:red, 208; green, 2; blue, 27 }  ,draw opacity=1 ][line width=1.5]    (378.63,49.54) -- (470.26,87.49) ;
\draw [color=red!40!blue  ,draw opacity=1 ][line width=1.5]    (44.67,141.17) -- (134.75,49.54) ;
\draw [color={rgb, 255:red, 0; green, 0; blue, 0 }  ,draw opacity=1 ][line width=0.75]    (44.67,141.17) -- (174.26,87.49) ;
\draw  [line width=1.5]  (174.26,141.17) -- (136.31,179.13) -- (82.63,179.13) -- (44.67,141.17) -- (44.67,87.49) -- (82.63,49.54) -- (136.31,49.54) -- (174.26,87.49) -- cycle ;
\draw  [fill={rgb, 255:red, 0; green, 0; blue, 0 }  ,fill opacity=1 ] (131.2,49.54) .. controls (131.2,47.58) and (132.79,45.99) .. (134.75,45.99) .. controls (136.71,45.99) and (138.31,47.58) .. (138.31,49.54) .. controls (138.31,51.5) and (136.71,53.09) .. (134.75,53.09) .. controls (132.79,53.09) and (131.2,51.5) .. (131.2,49.54) -- cycle ;
\draw  [fill={rgb, 255:red, 0; green, 0; blue, 0 }  ,fill opacity=1 ] (170.71,87.49) .. controls (170.71,85.53) and (172.3,83.94) .. (174.26,83.94) .. controls (176.22,83.94) and (177.81,85.53) .. (177.81,87.49) .. controls (177.81,89.46) and (176.22,91.05) .. (174.26,91.05) .. controls (172.3,91.05) and (170.71,89.46) .. (170.71,87.49) -- cycle ;
\draw  [fill={rgb, 255:red, 0; green, 0; blue, 0 }  ,fill opacity=1 ] (170.71,141.17) .. controls (170.71,139.21) and (172.3,137.62) .. (174.26,137.62) .. controls (176.22,137.62) and (177.81,139.21) .. (177.81,141.17) .. controls (177.81,143.14) and (176.22,144.73) .. (174.26,144.73) .. controls (172.3,144.73) and (170.71,143.14) .. (170.71,141.17) -- cycle ;
\draw  [fill={rgb, 255:red, 0; green, 0; blue, 0 }  ,fill opacity=1 ] (132.75,179.13) .. controls (132.75,177.17) and (134.34,175.58) .. (136.31,175.58) .. controls (138.27,175.58) and (139.86,177.17) .. (139.86,179.13) .. controls (139.86,181.09) and (138.27,182.68) .. (136.31,182.68) .. controls (134.34,182.68) and (132.75,181.09) .. (132.75,179.13) -- cycle ;
\draw  [fill={rgb, 255:red, 0; green, 0; blue, 0 }  ,fill opacity=1 ] (79.07,179.13) .. controls (79.07,177.17) and (80.66,175.58) .. (82.63,175.58) .. controls (84.59,175.58) and (86.18,177.17) .. (86.18,179.13) .. controls (86.18,181.09) and (84.59,182.68) .. (82.63,182.68) .. controls (80.66,182.68) and (79.07,181.09) .. (79.07,179.13) -- cycle ;
\draw  [fill={rgb, 255:red, 0; green, 0; blue, 0 }  ,fill opacity=1 ] (41.12,141.17) .. controls (41.12,139.21) and (42.71,137.62) .. (44.67,137.62) .. controls (46.63,137.62) and (48.22,139.21) .. (48.22,141.17) .. controls (48.22,143.14) and (46.63,144.73) .. (44.67,144.73) .. controls (42.71,144.73) and (41.12,143.14) .. (41.12,141.17) -- cycle ;
\draw  [fill={rgb, 255:red, 0; green, 0; blue, 0 }  ,fill opacity=1 ] (41.12,87.49) .. controls (41.12,85.53) and (42.71,83.94) .. (44.67,83.94) .. controls (46.63,83.94) and (48.22,85.53) .. (48.22,87.49) .. controls (48.22,89.46) and (46.63,91.05) .. (44.67,91.05) .. controls (42.71,91.05) and (41.12,89.46) .. (41.12,87.49) -- cycle ;
\draw  [fill={rgb, 255:red, 0; green, 0; blue, 0 }  ,fill opacity=1 ] (79.07,49.54) .. controls (79.07,47.58) and (80.66,45.99) .. (82.63,45.99) .. controls (84.59,45.99) and (86.18,47.58) .. (86.18,49.54) .. controls (86.18,51.5) and (84.59,53.09) .. (82.63,53.09) .. controls (80.66,53.09) and (79.07,51.5) .. (79.07,49.54) -- cycle ;
\draw  [line width=1.5]  (470.26,141.17) -- (432.31,179.13) -- (378.63,179.13) -- (340.67,141.17) -- (340.67,87.49) -- (378.63,49.54) -- (432.31,49.54) -- (470.26,87.49) -- cycle ;
\draw  [fill={rgb, 255:red, 0; green, 0; blue, 0 }  ,fill opacity=1 ] (427.2,49.54) .. controls (427.2,47.58) and (428.79,45.99) .. (430.75,45.99) .. controls (432.71,45.99) and (434.31,47.58) .. (434.31,49.54) .. controls (434.31,51.5) and (432.71,53.09) .. (430.75,53.09) .. controls (428.79,53.09) and (427.2,51.5) .. (427.2,49.54) -- cycle ;
\draw  [fill={rgb, 255:red, 0; green, 0; blue, 0 }  ,fill opacity=1 ] (466.71,87.49) .. controls (466.71,85.53) and (468.3,83.94) .. (470.26,83.94) .. controls (472.22,83.94) and (473.81,85.53) .. (473.81,87.49) .. controls (473.81,89.46) and (472.22,91.05) .. (470.26,91.05) .. controls (468.3,91.05) and (466.71,89.46) .. (466.71,87.49) -- cycle ;
\draw  [fill={rgb, 255:red, 0; green, 0; blue, 0 }  ,fill opacity=1 ] (466.71,141.17) .. controls (466.71,139.21) and (468.3,137.62) .. (470.26,137.62) .. controls (472.22,137.62) and (473.81,139.21) .. (473.81,141.17) .. controls (473.81,143.14) and (472.22,144.73) .. (470.26,144.73) .. controls (468.3,144.73) and (466.71,143.14) .. (466.71,141.17) -- cycle ;
\draw  [fill={rgb, 255:red, 0; green, 0; blue, 0 }  ,fill opacity=1 ] (428.75,179.13) .. controls (428.75,177.17) and (430.34,175.58) .. (432.31,175.58) .. controls (434.27,175.58) and (435.86,177.17) .. (435.86,179.13) .. controls (435.86,181.09) and (434.27,182.68) .. (432.31,182.68) .. controls (430.34,182.68) and (428.75,181.09) .. (428.75,179.13) -- cycle ;
\draw  [fill={rgb, 255:red, 0; green, 0; blue, 0 }  ,fill opacity=1 ] (375.07,179.13) .. controls (375.07,177.17) and (376.66,175.58) .. (378.63,175.58) .. controls (380.59,175.58) and (382.18,177.17) .. (382.18,179.13) .. controls (382.18,181.09) and (380.59,182.68) .. (378.63,182.68) .. controls (376.66,182.68) and (375.07,181.09) .. (375.07,179.13) -- cycle ;
\draw  [fill={rgb, 255:red, 0; green, 0; blue, 0 }  ,fill opacity=1 ] (337.12,141.17) .. controls (337.12,139.21) and (338.71,137.62) .. (340.67,137.62) .. controls (342.63,137.62) and (344.22,139.21) .. (344.22,141.17) .. controls (344.22,143.14) and (342.63,144.73) .. (340.67,144.73) .. controls (338.71,144.73) and (337.12,143.14) .. (337.12,141.17) -- cycle ;
\draw  [fill={rgb, 255:red, 0; green, 0; blue, 0 }  ,fill opacity=1 ] (337.12,87.49) .. controls (337.12,85.53) and (338.71,83.94) .. (340.67,83.94) .. controls (342.63,83.94) and (344.22,85.53) .. (344.22,87.49) .. controls (344.22,89.46) and (342.63,91.05) .. (340.67,91.05) .. controls (338.71,91.05) and (337.12,89.46) .. (337.12,87.49) -- cycle ;
\draw  [fill={rgb, 255:red, 0; green, 0; blue, 0 }  ,fill opacity=1 ] (375.07,49.54) .. controls (375.07,47.58) and (376.66,45.99) .. (378.63,45.99) .. controls (380.59,45.99) and (382.18,47.58) .. (382.18,49.54) .. controls (382.18,51.5) and (380.59,53.09) .. (378.63,53.09) .. controls (376.66,53.09) and (375.07,51.5) .. (375.07,49.54) -- cycle ;
\draw    (82.63,179.13) -- (174.26,87.49) ;
\draw    (136.31,179.13) -- (174.26,87.49) ;
\draw    (44.67,141.17) -- (82.63,49.54) ;
\draw    (340.67,141.17) -- (378.63,49.54) ;
\draw    (340.67,141.17) -- (470.26,87.49) ;
\draw    (378.63,179.13) -- (470.26,87.49) ;
\draw    (432.31,179.13) -- (470.26,87.49) ;
\draw    (240,110) -- (278.18,110.48) ;
\draw [shift={(280.18,110.5)}, rotate = 180.71] [color={rgb, 255:red, 0; green, 0; blue, 0 }  ][line width=0.75]    (10.93,-3.29) .. controls (6.95,-1.4) and (3.31,-0.3) .. (0,0) .. controls (3.31,0.3) and (6.95,1.4) .. (10.93,3.29)   ;

\draw (76,185.4) node [anchor=north west][inner sep=0.75pt]    {$1$};
\draw (134,185.88) node [anchor=north west][inner sep=0.75pt]    {$2$};
\draw (183,133.88) node [anchor=north west][inner sep=0.75pt]    {$3$};
\draw (181,76.88) node [anchor=north west][inner sep=0.75pt]    {$4$};
\draw (131,25.4) node [anchor=north west][inner sep=0.75pt]    {$5$};
\draw (75,25.4) node [anchor=north west][inner sep=0.75pt]    {$6$};
\draw (24,76.4) node [anchor=north west][inner sep=0.75pt]    {$7$};
\draw (24,134.4) node [anchor=north west][inner sep=0.75pt]    {$8$};
\draw (372,185.4) node [anchor=north west][inner sep=0.75pt]    {$1$};
\draw (430,185.88) node [anchor=north west][inner sep=0.75pt]    {$2$};
\draw (479,133.88) node [anchor=north west][inner sep=0.75pt]    {$3$};
\draw (477,76.88) node [anchor=north west][inner sep=0.75pt]    {$4$};
\draw (427,25.4) node [anchor=north west][inner sep=0.75pt]    {$5$};
\draw (371,25.4) node [anchor=north west][inner sep=0.75pt]    {$6$};
\draw (320,76.4) node [anchor=north west][inner sep=0.75pt]    {$7$};
\draw (320,134.4) node [anchor=north west][inner sep=0.75pt]    {$8$};
\end{tikzpicture}
    \caption{Mutation of a triangulation on the octagon.
    The black diagonals correspond to indecomposable summands of an almost complete basic tilting object $\bar{T}$.
    The purple and red diagonals correspond to complements $M$ and $M^*$, respectively.
    We replace the purple edge with the red edge, mutating the triangulation, which corresponds to replacing $M$ with $M^*$.
    Every mutation of triangulations of the octagon models a cluster mutation for the cluster algebra and cluster category of type $\mathbb{A}_5$.}\label{fig:mut_A5}
\end{figure}
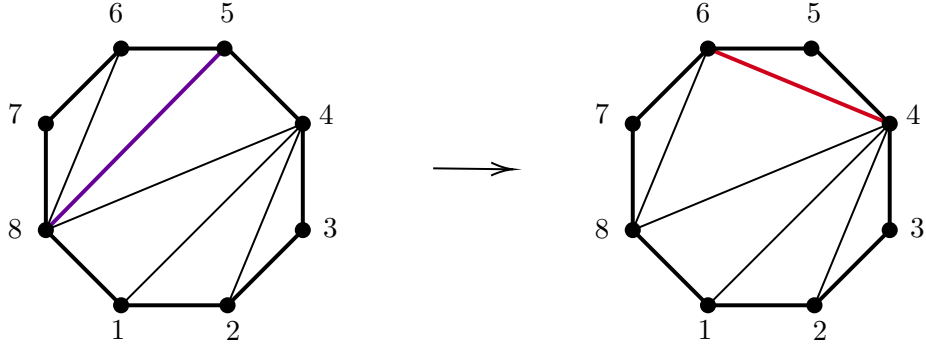

\begin{figure}[h]
\centering
    \tikzset{every picture/.style={line width=0.75pt}} 

\begin{tikzpicture}[x=0.75pt,y=0.75pt,yscale=-1,xscale=1]

\draw [color={rgb, 255:red, 208; green, 2; blue, 27 }  ,draw opacity=1 ][line width=1.5]    (405,141) -- (473.57,121.37) ;
\draw  [line width=1.5]  (112,73) -- (178.57,121.37) -- (153.14,199.63) -- (70.86,199.63) -- (45.43,121.37) -- cycle ;
\draw  [fill={rgb, 255:red, 0; green, 0; blue, 0 }  ,fill opacity=1 ] (149.59,199.63) .. controls (149.59,197.67) and (151.18,196.08) .. (153.14,196.08) .. controls (155.11,196.08) and (156.7,197.67) .. (156.7,199.63) .. controls (156.7,201.59) and (155.11,203.18) .. (153.14,203.18) .. controls (151.18,203.18) and (149.59,201.59) .. (149.59,199.63) -- cycle ;
\draw  [fill={rgb, 255:red, 0; green, 0; blue, 0 }  ,fill opacity=1 ] (175.02,121.37) .. controls (175.02,119.41) and (176.61,117.82) .. (178.57,117.82) .. controls (180.54,117.82) and (182.13,119.41) .. (182.13,121.37) .. controls (182.13,123.33) and (180.54,124.92) .. (178.57,124.92) .. controls (176.61,124.92) and (175.02,123.33) .. (175.02,121.37) -- cycle ;
\draw  [fill={rgb, 255:red, 0; green, 0; blue, 0 }  ,fill opacity=1 ] (108.45,73) .. controls (108.45,71.04) and (110.04,69.45) .. (112,69.45) .. controls (113.96,69.45) and (115.55,71.04) .. (115.55,73) .. controls (115.55,74.96) and (113.96,76.55) .. (112,76.55) .. controls (110.04,76.55) and (108.45,74.96) .. (108.45,73) -- cycle ;
\draw  [fill={rgb, 255:red, 0; green, 0; blue, 0 }  ,fill opacity=1 ] (106.45,141) .. controls (106.45,139.04) and (108.04,137.45) .. (110,137.45) .. controls (111.96,137.45) and (113.55,139.04) .. (113.55,141) .. controls (113.55,142.96) and (111.96,144.55) .. (110,144.55) .. controls (108.04,144.55) and (106.45,142.96) .. (106.45,141) -- cycle ;
\draw    (110,141) -- (71.41,199.63) ;
\draw    (45.43,121.37) -- (110,141) ;
\draw [color=red!40!blue  ,draw opacity=1 ][line width=1.5]    (45.43,121.37) .. controls (146,117.97) and (167,123.97) .. (71.41,199.63) ;
\draw    (45.43,121.37) .. controls (85.43,91.37) and (159,104.97) .. (178.57,121.37) ;
\draw    (70.86,199.63) .. controls (112,199.97) and (171,148.97) .. (178.57,121.37) ;
\draw  [fill={rgb, 255:red, 0; green, 0; blue, 0 }  ,fill opacity=1 ] (41.87,121.37) .. controls (41.87,119.41) and (43.46,117.82) .. (45.43,117.82) .. controls (47.39,117.82) and (48.98,119.41) .. (48.98,121.37) .. controls (48.98,123.33) and (47.39,124.92) .. (45.43,124.92) .. controls (43.46,124.92) and (41.87,123.33) .. (41.87,121.37) -- cycle ;
\draw  [fill={rgb, 255:red, 0; green, 0; blue, 0 }  ,fill opacity=1 ] (67.86,199.63) .. controls (67.86,197.67) and (69.45,196.08) .. (71.41,196.08) .. controls (73.37,196.08) and (74.96,197.67) .. (74.96,199.63) .. controls (74.96,201.59) and (73.37,203.18) .. (71.41,203.18) .. controls (69.45,203.18) and (67.86,201.59) .. (67.86,199.63) -- cycle ;
\draw  [line width=1.5]  (407,73) -- (473.57,121.37) -- (448.14,199.63) -- (365.86,199.63) -- (340.43,121.37) -- cycle ;
\draw  [fill={rgb, 255:red, 0; green, 0; blue, 0 }  ,fill opacity=1 ] (444.59,199.63) .. controls (444.59,197.67) and (446.18,196.08) .. (448.14,196.08) .. controls (450.11,196.08) and (451.7,197.67) .. (451.7,199.63) .. controls (451.7,201.59) and (450.11,203.18) .. (448.14,203.18) .. controls (446.18,203.18) and (444.59,201.59) .. (444.59,199.63) -- cycle ;
\draw  [fill={rgb, 255:red, 0; green, 0; blue, 0 }  ,fill opacity=1 ] (470.02,121.37) .. controls (470.02,119.41) and (471.61,117.82) .. (473.57,117.82) .. controls (475.54,117.82) and (477.13,119.41) .. (477.13,121.37) .. controls (477.13,123.33) and (475.54,124.92) .. (473.57,124.92) .. controls (471.61,124.92) and (470.02,123.33) .. (470.02,121.37) -- cycle ;
\draw  [fill={rgb, 255:red, 0; green, 0; blue, 0 }  ,fill opacity=1 ] (403.45,73) .. controls (403.45,71.04) and (405.04,69.45) .. (407,69.45) .. controls (408.96,69.45) and (410.55,71.04) .. (410.55,73) .. controls (410.55,74.96) and (408.96,76.55) .. (407,76.55) .. controls (405.04,76.55) and (403.45,74.96) .. (403.45,73) -- cycle ;
\draw  [fill={rgb, 255:red, 0; green, 0; blue, 0 }  ,fill opacity=1 ] (401.45,141) .. controls (401.45,139.04) and (403.04,137.45) .. (405,137.45) .. controls (406.96,137.45) and (408.55,139.04) .. (408.55,141) .. controls (408.55,142.96) and (406.96,144.55) .. (405,144.55) .. controls (403.04,144.55) and (401.45,142.96) .. (401.45,141) -- cycle ;
\draw    (405,141) -- (366.41,199.63) ;
\draw    (340.43,121.37) -- (405,141) ;
\draw    (340.43,121.37) .. controls (380.43,91.37) and (454,104.97) .. (473.57,121.37) ;
\draw    (365.86,199.63) .. controls (407,199.97) and (466,148.97) .. (473.57,121.37) ;
\draw  [fill={rgb, 255:red, 0; green, 0; blue, 0 }  ,fill opacity=1 ] (336.87,121.37) .. controls (336.87,119.41) and (338.46,117.82) .. (340.43,117.82) .. controls (342.39,117.82) and (343.98,119.41) .. (343.98,121.37) .. controls (343.98,123.33) and (342.39,124.92) .. (340.43,124.92) .. controls (338.46,124.92) and (336.87,123.33) .. (336.87,121.37) -- cycle ;
\draw  [fill={rgb, 255:red, 0; green, 0; blue, 0 }  ,fill opacity=1 ] (362.86,199.63) .. controls (362.86,197.67) and (364.45,196.08) .. (366.41,196.08) .. controls (368.37,196.08) and (369.96,197.67) .. (369.96,199.63) .. controls (369.96,201.59) and (368.37,203.18) .. (366.41,203.18) .. controls (364.45,203.18) and (362.86,201.59) .. (362.86,199.63) -- cycle ;
\draw    (239,119.23) -- (278,119.23) ;
\draw [shift={(280,119.23)}, rotate = 180] [color={rgb, 255:red, 0; green, 0; blue, 0 }  ][line width=0.75]    (10.93,-3.29) .. controls (6.95,-1.4) and (3.31,-0.3) .. (0,0) .. controls (3.31,0.3) and (6.95,1.4) .. (10.93,3.29)   ;

\draw (62,205.4) node [anchor=north west][inner sep=0.75pt]    {$1$};
\draw (150,205.4) node [anchor=north west][inner sep=0.75pt]    {$2$};
\draw (184,112.4) node [anchor=north west][inner sep=0.75pt]    {$3$};
\draw (105,51.4) node [anchor=north west][inner sep=0.75pt]    {$4$};
\draw (27,112.4) node [anchor=north west][inner sep=0.75pt]    {$5$};
\draw (358,205.4) node [anchor=north west][inner sep=0.75pt]    {$1$};
\draw (445,205.4) node [anchor=north west][inner sep=0.75pt]    {$2$};
\draw (482,112.4) node [anchor=north west][inner sep=0.75pt]    {$3$};
\draw (399,51.4) node [anchor=north west][inner sep=0.75pt]    {$4$};
\draw (324,112.4) node [anchor=north west][inner sep=0.75pt]    {$5$};
\end{tikzpicture}
    \caption{Mutation of a triangulation on the punctured pentagon.
    The black diagonals correspond to indecomposable summands of an almost complete basic tilting object $\bar{T}$.
    The purple and red diagonals correspond to complements $M$ and $M^*$, respectively.
    We replace the purple edge with the red edge, mutating the triangulation, which corresponds to replacing $M$ with $M^*$.
    Every mutation of triangulations on the punctured pentagon models a mutation for the cluster algebra and cluster category of type $\mathbb{D}_5$.}\label{fig:mut_D5}
\end{figure}
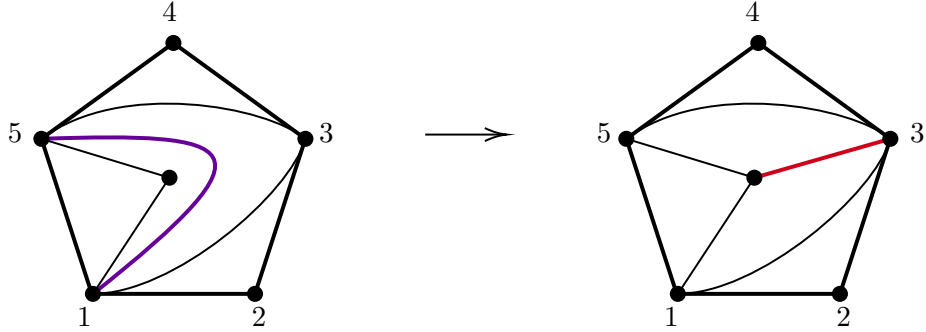

\section{Infinite representations and infinite cluster categories}\label{sec:infinite}
In this section, we move from the finite setting to the infinite.
We start by looking at representations of thread quivers (Section~\ref{sec:thread quivers}).
Then we discuss infinite cluster categories (Section~\ref{sec:infinite  cluster categories}).
We conclude with the further abstractions of weak cluster structures and cluster theories (Sections~\ref{sec:cluster structures} and \ref{sec:cluster theories}, respectively).

\subsection{Pointwise finite-dimensional representations of thread quivers}\label{sec:thread quivers}
We recall the notion of thread quivers from \cite{berg2014threaded}. They are introduced as an (infinite) generalization of quivers. One of their significant properties is that every $\kk$-linear hereditary category with Serre duality and enough projectives is equivalent to the category of finitely presented representations of a thread quiver. 

\medskip
\begin{definition}\label{def:thread quiver}
A thread quiver consists of the following data:
\begin{itemize}
\item 
A quiver $\QQ = (\QQ_0, \QQ_1)$ where $\QQ_0$ is the set of vertices and $\QQ_1$ is the set of arrows.
\item A decomposition $\QQ_1 = \QQ_s \bigsqcup\QQ_t$, where arrows in $\QQ_s$ are called standard arrows, and arrows in $\QQ_t$ are called thread arrows.
\item An associated linearly ordered set $P_t$ (possibly empty) for every thread arrow $t$. 
    \end{itemize}
To every thread arrow $t : x \xrightarrow {P_t}y$ in $\QQ$ we associate a 
linearly ordered poset with a minimal and a maximal element.
Usually, we take a locally discrete set.
This means that every vertex has a predecessor (except the minimum) and successor (except the maximum).
More precisely, either we use $L_t = \NN\cdot(J_t\overrightarrow{\times} \ZZ)\cdot-\NN$, where $J_t \overrightarrow{\times}\ZZ$ is the poset $J_t \times \ZZ$ with the lexicographical ordering, or some finite set.
We will use $L_t$ later.
Moreover, the $L_t$ is considered as a category, and $\kk L_t$ is the associated $\kk$-linear additive category.
\end{definition}
If a subset of a thread arrow has no maximum (or minimum) element, we call the supremum (or infimum) an accumulation point, even if it is not in the linearly ordered poset.

Any quiver $Q$ is a trivial example of a thread quiver where the set $Q_t$ is empty.
We study a family of examples in Section~\ref{sec:D infinity}.

\medskip

Let $\QQ$ be a thread quiver.
By $\Rep_\Bbbk(\QQ)$ we denote the category of functors $\QQ\to \Bbbk$-vec.
By $\Rep_\Bbbk^\text{pwf}(\QQ)$ we denote the full subcategory of $\Rep_\Bbbk(\QQ)$ that factors through $\Bbbk$-vec (finite-dimensional $\Bbbk$-vector spaces).
By $\rep_\Bbbk(\QQ)$ we denote the full subcategory of $\Rep_\Bbbk^\text{pwf}(\QQ)$ whose objects are finitely generated by projective objects.
Finally, by $\rep_\Bbbk^\text{fp}(\QQ)$ we denote the full subcategory of $\rep_\Bbbk(\QQ)$ whose objects are finitely presented.
This means each object in $\rep_\Bbbk^\text{fp}(\QQ)$ has a projective cover that is a finite direct sum of representable projectives whose kernel is the direct sum of finitely many representable projectives.
Each of these subcategories is also a wide subcategory of the category in which it is defined.

\begin{proposition}\label{prop:distinct subcategories}
    Let $\QQ$ be a thread quiver where $\QQ_1$ is finite.
    Then \[\rep_\Bbbk^{\fp}(\QQ) \subseteq \rep_\Bbbk(\QQ) \subsetneq \Rep_\Bbbk^{\pwf}(\QQ) \subsetneq\Rep_\Bbbk(\QQ),\]
    where ``$\mathcal A\subsetneq \mathcal B$'' means ``$\mathcal A$ is a proper subcategory of $\mathcal B$.''
    The inclusion $\rep_\Bbbk^{\fp}(\QQ) \subseteq \rep_\Bbbk(\QQ)$ is strict if and only if $\QQ$ has a thread arrow with an accumulation point.
\end{proposition}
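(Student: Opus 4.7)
My plan is to split the statement into four parts: the four non-strict containments (by definition), the strict inclusion $\Rep_\Bbbk^{\pwf}(\QQ) \subsetneq \Rep_\Bbbk(\QQ)$, the strict inclusion $\rep_\Bbbk(\QQ) \subsetneq \Rep_\Bbbk^{\pwf}(\QQ)$, and the biconditional for the first inclusion. The four containments hold by definition, each layer adding one further finiteness condition. The rightmost strictness is witnessed by concentrating an infinite-dimensional vector space at any single vertex (with zero transitions). For the middle strictness I will use that the (extended) vertex set of $\QQ$ is infinite once a thread arrow is present; the direct sum $M := \bigoplus_{v} S_v$ over all vertices of the simple representations is pwf, but any morphism from a finite direct sum $\bigoplus_{i=1}^n P_{v_i}$ of representable projectives factors through $\bigoplus_{i=1}^n S_{v_i}$—every transition in $M$ vanishes—so no such morphism can be surjective and $M$ is not finitely generated.

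The main work is the biconditional. For the direction $(\Leftarrow)$, I will choose a thread arrow $t$ carrying an accumulation point $p$; by symmetry I may assume that $L_{>p} := \{v \in L_t : v > p\}$ has no minimum. Taking $P_0 := P_{\min L_t}$, which is one-dimensional at every vertex lying on the thread, I let $N \subseteq P_0$ be the subrepresentation concentrated on $L_{>p}$; this is a genuine subrepresentation because $L_{>p}$ is upward-closed in the thread's poset. Then $M := P_0/N$ is finitely generated as a quotient of a single representable projective, and I will argue that it is not finitely presented by showing $N$ itself is not finitely generated: if $N$ were covered by representable projectives at finitely many vertices $u_1,\ldots,u_n \in L_{>p}$, then $\min_i u_i$ would be a minimum of $\supp(N) = L_{>p}$, contradicting the defining property of the accumulation point.

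For the direction $(\Rightarrow)$, assume $\QQ$ has no accumulation points. Each locally discrete thread-arrow poset $L_t$ with minimum and maximum is then forced to be finite: the chain of successors starting at $\min L_t$ either reaches $\max L_t$ in finitely many steps or produces an infinite bounded chain whose supremum would be an accumulation point. Combined with the finiteness of $\QQ_1$, the total vertex set of $\QQ$ is finite, $\Bbbk\QQ$ is a finite-dimensional hereditary algebra, and every finitely generated representation over such an algebra is finitely presented, so $\rep_\Bbbk^{\fp}(\QQ) = \rep_\Bbbk(\QQ)$.

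The main obstacle I anticipate is the $(\Leftarrow)$ direction of the biconditional: I must verify carefully that the candidate $N$ is genuinely a subrepresentation of $P_0$ and not merely a subfunctor of vector spaces, and then convert the order-theoretic failure of a minimum in $L_{>p}$ into a categorical non-finite-generation statement for $N$ inside $\rep_\Bbbk(\QQ)$. Once that bridge is in place, the remaining work is essentially bookkeeping.
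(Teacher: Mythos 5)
Your proof is correct, and for most of the statement it follows the same route as the paper: the four inclusions are definitional, the strictness of $\Rep^{\pwf}_\Bbbk(\QQ)\subsetneq\Rep_\Bbbk(\QQ)$ is witnessed by an object with an infinite-dimensional fibre at a vertex, and the strictness of $\rep_\Bbbk(\QQ)\subsetneq\Rep^{\pwf}_\Bbbk(\QQ)$ is witnessed by exactly the same object the paper uses, the direct sum of all simples (you supply the factoring argument that the paper leaves implicit). The one place you genuinely diverge is the $(\Leftarrow)$ direction of the biconditional. The paper's witness is the non-representable projective $P_j$ sitting ``at'' the accumulation point: it is projective but not representable, hence lies in $\rep_\Bbbk(\QQ)$ while admitting no finite presentation by representable projectives. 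Your witness is instead the quotient $M=P_{\min L_t}/N$, where $N$ is the subrepresentation of $P_{\min L_t}$ supported on the upward-closed set $L_{>p}$: $M$ is manifestly a quotient of a single representable projective, and its syzygy $N$ is indecomposable with support lacking a minimum, hence not a finite direct sum of representable projectives. Both arguments work, but yours has the advantage that membership of the witness in $\rep_\Bbbk(\QQ)$ is immediate, whereas the paper's requires accepting that the non-representable $P_j$ is itself a legitimately finitely generated projective object; your version also isolates the order-theoretic content (an upward-closed set with no minimum) explicitly, which is exactly what fails when there is no accumulation point. Your $(\Rightarrow)$ direction --- no accumulation points plus local discreteness forces each thread poset to be finite, so $\QQ$ is an ordinary finite quiver and finitely generated equals finitely presented over the resulting finite-dimensional hereditary algebra --- is the same argument the paper gives in compressed form. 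One small caveat: your ``by symmetry'' reduction to the case where $L_{>p}$ has no minimum is harmless only because, for the locally discrete posets with minimum and maximum that the paper actually allows, every accumulation point is automatically two-sided; it is worth saying this rather than appealing to symmetry.
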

\begin{proof}
    We see that $\Rep_\Bbbk^\text{pwf}(\QQ) \subsetneq\Rep_\Bbbk(\QQ)$ by noting that the object $\bigoplus_{i\in\ZZ} M$, for any other object $M$, is in $\Rep_\Bbbk(\QQ)$ but is not in $\Rep_\Bbbk^\text{pwf}(\QQ)$.
    Next we see that the direct sum of all simple representations $\bigoplus_{j\in\QQ} S_j$ is in $\Rep_\Bbbk^\text{pwf}(\QQ)$ but is not in $\rep_\Bbbk(\QQ)$.
    
    Suppose $\QQ$ has an accumulation point and let $j$ be an accumulation point in $\QQ$.
    Then there is a projective $P_j$, but it is not representable.
    Thus, the object $P_j$ is in $\rep_\Bbbk(\QQ)$ but it is not in $\rep_\Bbbk^\text{fp}(\QQ)$.
    
    If $\QQ$ has no accumulation points, then all projective objects are representable because $\QQ$ is equivalent to a finite quiver.
\end{proof}

\begin{remark}\label{rmk:bounded category}
    There is another wide subcategory between $\rep_\Bbbk(\QQ)$ and $\Rep_\Bbbk^\text{pwf}(\QQ)$ called the bounded subcategory, denoted by $\Rep_\Bbbk^\text{B}(\QQ)$.
    This is a different notion of ``bounded'' from the bounded derived category, hence the capital `B' instead of lowercase `b.'
    For each object $M$ in $\Rep_\Bbbk^\text{B}(\QQ)$ there exists an $n\in\NN$ such that $\dim_\Bbbk M(j) < n$ for all $j\in\QQ$.
If we index the vertices of $\QQ$ (including those in the thread arrows) by $\NN$, then the object $\bigoplus_{j\in\NN_{>0}} (\bigoplus_{k=1}^j S_j)$ is in $\Rep_\Bbbk^\text{pwf}(\QQ)$ but it is not in $\Rep_\Bbbk^\text{B}(\QQ)$.
    While we do not use $\Rep_\Bbbk^\text{B}(\QQ)$ in this paper, the category is interesting in its own right.
\end{remark}

\subsection{Infinite cluster categories}\label{sec:infinite  cluster categories}

\subsubsection{The infinity-gons}

\begin{definition}\label{def:unpunctured infinity-gon}
    Let $D$ be the closed 2-dimensional ball $D=\{(x,y)\in\RR^2\text{ such that } ||(x,y)||\leq 1\}$.
    \begin{itemize}
    \item The $\infty$-gon, denoted $\mathcal{U}_\infty$, is given by $D$ together with a set of marked points $\mathcal{M}$ such that $\mathcal{M}\cap \interior D=\emptyset$ and $\mathcal{M}\cap \partial D\simeq \ZZ$.
    See Figure~\ref{fig:unpuctured infty-gon} (left).
    \item For $n\in\NN_{>0}$ the $(n,\infty)$-gon $\mathcal{U}_{n,\infty}$ is given by $D$ together with a set of marked points $\mathcal{M}$ such that $\mathcal{M}\cap \interior D=\emptyset$ and $M\cap \partial D\simeq \ZZ$ has no one-sided accumulation points and has $n$ two-sided accumulation points $\{a_1,\ldots,a_n\}$ such that $a_i\notin \mathcal{M}$ for every $i=1,\ldots,n$.
    Notice $\mathcal{U}_\infty=\mathcal{U}_{1,\infty}$. See Figure~\ref{fig:unpuctured infty-gon} (right).
    \item The completed $(n,\infty)$-gon, denoted $\mathcal{U}_{\overline{n,\infty}}$ is defined by taking $\mathcal{U}_{n,\infty}$ and including $a_1,\ldots,a_n$ in the set $\mathcal{M}$. See Figure~\ref{fig:unpuctured completed infty-gon}.
    \end{itemize}
\end{definition}

\begin{figure}[h]
    \centering
    \input{U_1infty_with_numbers}
    \caption{On the left $\mathcal{U}_\infty=\mathcal{U}_{1,\infty}$ and on the right $\mathcal{U}_{3,\infty}$ from Definition~\ref{def:unpunctured infinity-gon}.}
    \label{fig:unpuctured infty-gon}
\end{figure}

\begin{figure}[h]
    \centering
    \input{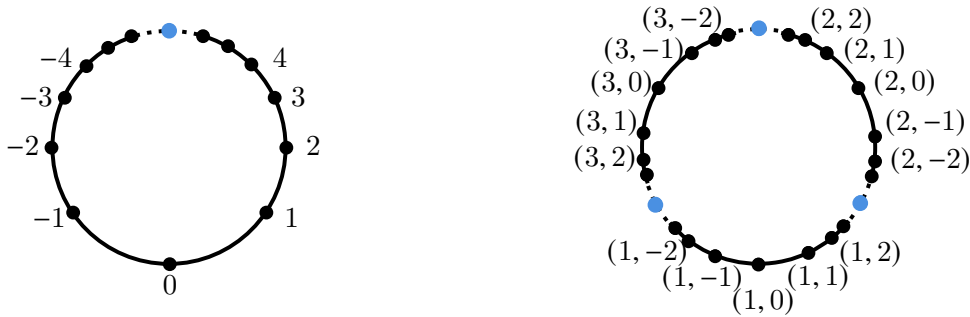}
    \caption{On the left $\mathcal{U}_{\overline{1,\infty}}$ and on the right $\mathcal{U}_{\overline{3,\infty}}$. The blue marked points are two-sided accumulation points. See Definition~\ref{def:unpunctured infinity-gon}.}
    \label{fig:unpuctured completed infty-gon}
\end{figure}

In \cite{holm2012cluster}, Holm and J{\o}rgensen studied a cluster category from the infinity-gon, which generalized type $\mathbb{A}_n$ cluster categories and their connection to polygons.
They proved that, when we omit certain clusters, we obtain a cluster structure.
See Figure~\ref{fig:mut_A_inf} for an example of a mutation in this setting.
\begin{figure}[h]
\centering
    \input{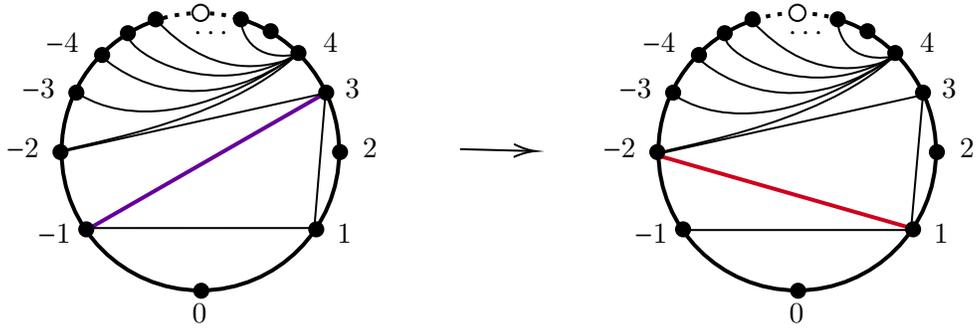}
    \caption{Mutation of a triangulation on the $\infty$-gon.
    We replace the purple edge with the red edge.
    The cluster category of type $\mathbb{A}_\infty$ and the corresponding combinatorial model on the $\infty$-gon was introduced by Holm and J{\o}rgensen.}
    \label{fig:mut_A_inf}
\end{figure}
See Figure~\ref{fig:weak cluster holm-jorgensen} for an example of a cluster that must be omitted.
The cluster in Figure~\ref{fig:weak cluster holm-jorgensen} is not functorially finite and thus fails one of the properties required for a cluster structure.
See \cite{holm2012cluster} for the same example and more discussion.

In \cite{igusa2015cyclic}, Igusa and Todorv generalized Holm and J{\o}rgensen's construction from $\mathcal{U}_\infty=\mathcal{U}_{1,\infty}$ to $\mathcal{U}_{n,\infty}$ for any $n\in\NN_{>0}$.
They did this by considering the marked points of $\mathcal{U}_{n,\infty}$ as a cyclic poset.
Their construction allows many other possibilities but we only emphasize the $(n,\infty)$-gons here.

\subsubsection{The completed infinity-gons}
In \cite{baur2018infinity}, Baur and Graz created a cluster category from the completion of the $\infty$-gon by adding separate points for $-\infty$ and $+\infty$.
Each of their triangulations then have the \emph{generic arc} between $-\infty$ and $+\infty$, since it could cross no other arc.
This may be viewed, in some sense, as a side of the infinite polygon.
However, this completion has two one-sided accumulation points, instead of one two-sided accumulation point.

In \cite{paquette2021completions}, Paquette and Y{\i}ld{\i}r{\i}m created (weak) cluster categories from a completion of the $(n,\infty)$-gon used by Igusa and Todorov in \cite{igusa2015cyclic} (and thus the completion of the $\infty$-gon used by Holm and J{\o}rgensen in \cite{holm2012cluster}).
In their work, Paquette and Y{\i}ld{\i}r{\i}m added one new point for each accumulation point in Igusa and Todorov's work, obtaining what we call $\mathcal{U}_{\overline{n,\infty}}$, by replacing each accumulation point with another copy of $\ZZ$ and then collapsing each new copy of $\ZZ$ to a point.
We take inspiration from this technique later.
When $n=1$, this results in a different completion of the $\infty$-gon from Baur and Graz's work since only one point is added instead of two; the single accumulation point is two-sided.

\subsubsection{Continuous cluster categories}
For completeness we discuss some work on continuous cluster categories.
However, as our intent is to construct discrete cluster categories, we will be brief.

The continuous cluster category, for type $\mathbb{A}$, was introduced and studied by Igusa and Todorov in \cite{igusa2015continuous} and also appears in \cite{igusa2015cyclic}.
This generalizes cluster categories for discrete type $\mathbb{A}_n$.
In particular, they generalized triangulations of polygons to laminations of the hyperbolic pane, which can be thought of as the $\mathbb{R}$-gon, where every point is also an accumulation point.
Igusa, Todorov, and the second author introduced a weak continuous cluster category by starting with all representations of continuous type $\mathbb{A}$ \cite{igusa2022continuous}.
In a weak cluster category, not every element in a cluster need to be mutable.
In \cite{rock2022continuous}, the second author generalized laminations of the hyperbolic plane to a new geometric model for the weak cluster categories in \cite{igusa2022continuous}.

\subsubsection{Infinite cluster categories of type $\mathbb{D}$}

As far as the authors know, there are only two existing infinite cluster categories of type $\mathbb{D}$.
The first is a continuous version introduced by Igusa and Todorov in 2013 \cite{igusa2013continuous}.
Their construction is made using Frobenius categories.

The second is a cluster category of type $\mathbb{D}_\infty$ by Yang in 2017 \cite{yang2017cluster}.
Yang uses an infinite quiver of type $\mathbb{D}$ with a zigzag orientation on the long tail and thus works with modules of finite dimension.
\begin{displaymath}
    \xymatrix@C=3ex@R=4ex{
    \bullet \\
    \bullet \ar[u] \ar[d] \ar[r] & \bullet & \bullet \ar[l] \ar[r] & \bullet & \bullet \ar[l] \ar[r] & \bullet & \bullet \ar[l] \ar[r] & \bullet & \bullet \ar[l] \ar[r] & \bullet & \bullet \ar[l] \ar[r] & \cdots \\
    \bullet
    }
\end{displaymath}
In contrast, our construction uses the straight orientation on the long tail and so we have pointwise finite-dimensional.
Both Yang's construction and our construction consider finitely-presented representations.

\subsection{Weak cluster structures}\label{sec:cluster structures}

Cluster categories can be generalized to cluster structures as shown in \cite{buan2009cluster}. In particular cluster structures preserve some important properties of cluster algebra, such as the unique exchange property and the existence of associated exchange triangles.
However, a cluster structure is too stringent for our requirements.
Instead, we need weak cluster structures.

\medskip

First we recall the definitions of approximations and weakly cluster tilting subcategory.
\begin{definition}\label{def:approximations}
Let $\mC$ be an additive category, $\mathcal{X}$ a full subcategory of $\mC$, and $M$ and object in $\mC$ but not in $\add(\mathcal{X})$.
\begin{itemize}
    \item A left $\add(\mathcal{X})$-approximation of $M$ is a map $f:M\to X$ in $\mC$, where $X$ is in $\add(\mathcal{X})$.
    Furthermore, if $f':M\to X'$ is any other map in $\mC$, where $X'$ is in $\add(\mathcal{X})$, there exists a unique $h:X\to X'$ such that $f'=hf$.
    \item A right $\add(\mathcal{X})$-approximation of $M$ is a map $g:X\to M$ in $\mC$, where $X$ is in $\add(\mathcal{X})$.
    Furthermore, if $g':X'\to M$ is any other map in $\mC$, where $X'$ is in $\add(\mathcal{X})$, there exists a unique $h:X'\to X$ such that $f'=fh$.
\end{itemize}
\end{definition}

The following is the $d=1$ case of a more general definition in \cite{holm2015weak}.

\begin{definition}\label{def:weak cluster tilting}
    Let $\mC$ be a triangulated category and $T$ a full subcategory of $\mC$.
    We say $T$ is \emph{weakly cluster tilting} if, for all objects $X,Y$ in $T$, \[\Ext(X,Y)=0=\Ext(Y,X).\]
\end{definition}

We now present the definition of a weak cluster structure without the notion of coefficients.
We refer the reader to \cite{buan2009cluster} for the version with coefficients.
\begin{definition}\label{def:weak cluster structure}
    Let $\mC$ be a Krull--Remak--Scmidt triangulated category and let $\mathcal{T}$ be a set of weakly cluster tilting subcategories in $\mC$.
    Then $\mathcal{T}$ forms a weak cluster structure if the following hold.
    \begin{enumerate}[label=(\roman*)]
        \item For each weakly cluster tilting subcategory $T\in\mathcal{T}$ and each indecomposable $M\in T$, there exists a unique indecomposable object $M^* \not\cong M$ such that replacing $M$ by $M^*$ gives a new weakly cluster tilting subcategory $T^*\in\mathcal{T}$.
        \item For each indecomposable $M$ in a $T\in\mathcal{T}$, there are triangles $M^* \overset{f}{\to} B \overset{g}{\to} M\to M^*[1]$ and $M \overset{s}{\to} B' \overset{t}{\to} M^*\to M[1]$, where $g$ and $t$ are minimal right $\add(T\backslash\{M\})$-approximations and $f$ and $s$ are minimal left $\add(T\backslash\{M\})$-approximations.
    \end{enumerate}
\end{definition}

We point out that the requirements for a cluster structure are more strict.
In particular, each cluster must be functorially finite.
However, in our case, this will not happen.
We will have cluster such as those in Figure~\ref{fig:weak cluster holm-jorgensen} that are not functorially finite.
\begin{figure}[h]
    \centering
    \input{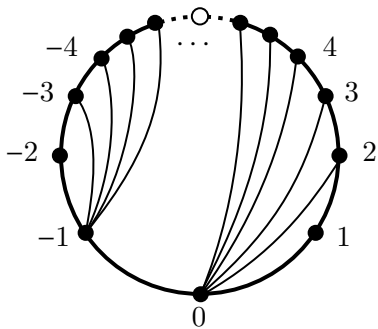}
    \caption{An example of a cluster in Holm and J{\o}rgensen's construction that is not functorially finite in its cluster category.
    Such a cluster cannot be included a cluster structure.
    The lack of functorial finiteness comes from the fact that the two ``fountains'' at $-1$ and $0$ do not coincide.}
    \label{fig:weak cluster holm-jorgensen}
\end{figure}
Thus, we only concern ourselves with weak cluster structures.

\subsection{Cluster theories}\label{sec:cluster theories}
In this section we discuss cluster theories, which generalize cluster categories and weak cluster structures, and embeddings of cluster theories.
These were introduced by Igusa, Todorov, and the second author in \cite{igusa2022continuous} in order to work around the obstruction mentioned at the end of this section.

\begin{definition}\label{def:cluster theory}
	Let $\mathcal{C}$ be a skeletally small, Krull--Remak--Schmidt, additive category and $\mathbf P:\ind(\mathcal{C})\times \ind(\mathcal{C}) \to \{0,1\}$ a pairwise compatibility condition. 
	Suppose that for every maximally $\mathbf P$-compatible set $T\subset \ind(\mathcal{C})$ and every $X\in T$ there exists zero or one $Y\neq X$ in $\ind(\mathcal{C})$ such that $(T\setminus\{X\})\cup\{Y\}$ is a $\mathbf P$-compatible set.
	\begin{itemize}
		\item We call maximally $\mathbf P$-compatible sets \emph{$\mathbf P$-clusters}.
		\item The function $\mu:T\to (T\setminus\{X\})\cup\{Y\}$ that takes $X\mapsto Y$ and $Z\mapsto Z$ whenever $Z\neq X$ is called a \emph{$\mathbf P$-mutation at $X$} or just a \emph{$\mathbf P$-mutation}.
		\item The groupoid whose objects are $\mathbf P$-clusters and whose morphisms are generated by $\mathbf P$-mutations (and identity functions) is called the \emph{$\mathbf P$-cluster theory of $\mathcal{C}$} and is denoted $\mathscr{T}_{\mathbf P}(\mathcal{C})$.
		\item We denote by $I_{\mathbf P,\mathcal{C}}$ the functor $\mathscr{T}_{\mathbf P}(\mathcal{C})\to \mathcal{S}ets$ that takes each set to itself and each function to itself.
	\end{itemize}
\end{definition}

All cluster categories and cluster structures in the sense of \cite{buan2006tilting, buan2009cluster} yield a cluster theory by using $\Ext$-orthogonality as the pairwise compatibility condition.
The cluster category in \cite{igusa2015continuous} yields a cluster theory which we denote by $\mathscr{T}_{\mathbf{N}_{\mathbb{R}}}(\mathcal{C}_\pi)$.
In particular, cluster theories allow us to consider all the triangulations of $\mathcal{U}_{\overline{n,\infty}}$ (Definition~\ref{def:unpunctured infinity-gon}), even those that have arcs we cannot mutate (see \cite{paquette2021completions}).

\begin{definition}
	Let $\mathscr{T}_{\mathbf P}(\mathcal{C})$ and $\mathscr{T}_{\mathbf Q}(\mathcal{D})$ be cluster theories.
	An \emph{embedding of cluster theories} is a pair $(F,\eta)$ where
	\begin{itemize}
		\item $F$ is a functor $F:\mathscr{T}_{\mathbf P}(\mathcal{C})\to \mathscr{T}_{\mathbf Q}(\mathcal{D})$ such that $F$ takes $\mathbf P$-mutations to $\mathbf Q$-mutations and is injective on both objects and morphisms
		\item $\eta$ is a natural transformation $\eta:I_{\mathbf{P},\mathcal{C}}\to I_{\mathbf{Q},\mathcal{D}}\circ F$ such that each component morphism $\eta_T:I_{\mathbf{P},\mathcal{C}}(T)\to I_{\mathbf{Q},\mathcal{D}}\circ F(T)$ is injective.
	\end{itemize}
\end{definition}

The weak cluster category in \cite{igusa2022continuous} yields a cluster theory which we denote by $\mathscr{T}_{\mathbf{E}}(\mathcal{C}(A_{\mathbb{R},S}))$.
This leads us to the following definition.
\begin{definition}\label{def:weak cluster category}
    Let $\mathcal{D}$ be a triangulated category and $\mathcal{C}$ a Krull--Remak--Schmidt, triangulated, orbit category of $\mathcal{D}$.
    Let $\mathbf P$ be a pairwise compatibility condition on the indecomposable objects in $\mathcal{C}$.
    If $\mathbf P$ induces a cluster theory then we call $\mathcal{C}$ a \emph{weak cluster category}.
\end{definition}

\noindent{\bf The Obstruction.}
Cluster theories and embeddings of cluster theories were introduced to work around the following obstruction.
As discussed in \cite[Remark 5.3.9]{igusa2022continuous}, there is no functorial way to embed the continuous cluster category from \cite{igusa2015continuous} into the weak cluster category from \cite{igusa2022continuous}, but the combinatorics appear to be compatible.
However, by \cite[Theorem 5.3.8]{igusa2022continuous} there is an embedding of cluster theories $\mathscr{T}_{\mathbf{N}_{\mathbb{R}}}(\mathcal{C}_\pi)\to \mathscr{T}_{\mathbf{E}}(\mathcal{C}(A_{\mathbb{R},S})$.
In fact, there is a large commutative diagram of embeddings of cluster theories which relates many cluster theories of type $\mathbb{A}$, including all those mentioned in the present paper so far \cite{rock2021cluster}.

Another cluster theory of type $\mathbb{A}$ was introduced by Kulkarni, Matherne, Mousavand, and the second author \cite{kulkarni2021associahedron} by constructing a continuous associahedron of type $\mathbb{A}$.
We denote this cluster theory by $\mathscr{T}_{\mathbf{T}}(\mathcal{C}_\mathcal{Z})$.
By \cite[Theorem 6.18]{kulkarni2021associahedron} and \cite[Section 3.1]{rock2021cluster}, these are embeddings of cluster theories of type $\mathbb{A}_n$ into $\mathscr{T}_{\mathbf{T}}(\mathcal{C}_\mathcal{Z})$.
As of writing, it is not known if there is an embedding of cluster theories that relates $\mathscr{T}_{\mathbf{T}}(\mathcal{C}_\mathcal{Z})$ to either $\mathscr{T}_{\mathbf{N}_{\mathbb{R}}}(\mathcal{C}_\pi)$ or $\mathscr{T}_{\mathbf{E}}(\mathcal{C}(A_{\mathbb{R},S})$.

\section{Families of infinite type $\mathbb{D}$ (weak) cluster categories}\label{sec:main}
In this section, we introduce the main construction of the paper. We will first introduce our main combinatorial objects of punctured $\infty$-gons, which are natural generalizations of the combinatorial notions in \cite{schiffler2008geometric} for the finite case. We then show that our proposed combinatorial models are correct, in the sense that, in each case, there is a family of infinite type $\mathbb{D}$ cluster categories such that the cluster structure of each category is encoded in the corresponding punctured $\infty$-gon.
We note that our construction differs from Igusa and Todorov's in \cite{igusa2013continuous} as their construction produces a category that is either finite or continuous and our construction is neither.

\subsection{Punctured infinity-gons}\label{subsec:tagged_edges}

\begin{definition}\label{def:punctured infty-gon}
Let $D$ be the closed $2$-dimensional ball $D = \{(x,y) \in \RR^2 \text{ such that } ||(x,y)|| \leq 1\} $. 
\begin{itemize}
    \item The \emph{punctured $\infty$-gon} $\Pinfty$ is given by $D$ together with a set of marked points $\mathcal{M}$ such that $\mathcal{M} \cap \interior D = \{(0,0)\}$ and $\mathcal{M} \cap \partial D \simeq \ZZ$ has no one-sided accumulation points and one two-sided accumulation point $a$ such that $a \not\in \mathcal{M}$. See \Cref{fig:inftygon} (left).
    
    \item For $n \in \NN_{>0}$, the \emph{punctured $(n,\infty)$-gon} $\mP_{n,\infty}$ is given by $D$ together with a set of marked points $M$ such that $\mathcal{M} \cap \interior D = \{(0,0)\}$ and $\mathcal{M} \cap \partial D \simeq \ZZ$ has no one-sided accumulation points and $n$ two-sided accumulation points $\{a_1,\dots, a_n\}$ such that $a_i \not\in \mathcal{M}$ for every $i=1,\dots,n$.  See \Cref{fig:inftygon} (right).
    
    \item The \emph{punctured completed $(n,\infty)$-gon}, denoted by $\mP_{\overline{n,\infty}}$, is defined in an analogous way, but the two-sided accumulation points $a_1,\dots, a_n$ are marked points, that is $a_1,\dots,a_n \in \mathcal{M}$. See \Cref{fig:completed inftygon}.
\end{itemize}
\end{definition}

We always label the central marked point of the $\infty$-gon by $(0,0)$. For the labels of the other marked points in the boundary, see the  examples in Figures~\ref{fig:inftygon} and \ref{fig:completed inftygon}.
\begin{figure}[h] \centering
\input{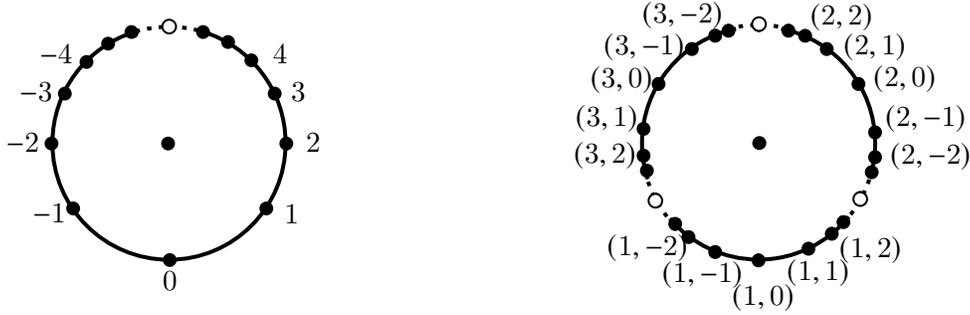}
\caption{On the left $\mP_\infty = \mP_{1,\infty}$ and on the right $\mP_{3,\infty}$ from Definition~\ref{def:punctured infty-gon}.}
\label{fig:inftygon}
\end{figure}

\begin{figure}[h]
    \centering
    \input{completed_infty_gons}
    \caption{On the left $\mP_{\overline{1,\infty}}$ and on the right $\mP_{\overline{3,\infty}}$. The blue marked points are two-sided accumulation points.}\label{fig:completed inftygon}
\end{figure}

\begin{definition}\label{def:tagged edges}
An \emph{edge} is a triple $((h,a),(k,b),\alpha)$ where $(h,a),(k,b) \in \mathcal{M}\backslash \{(0,0)\}$ are distinct marked points and $\alpha: [0,1] \to D$ is a path from $(h,a)$ to $(k,b)$ such that:
\begin{enumerate}[label= (\roman*)]
    \item $\alpha$ is homotopic to the counterclockwise path from $(h,a)$ to $(k,b)$ along $\partial D$;
    \item $\alpha((0,1)) \subset \interior(D)\backslash\{(0,0)\}$, that is the image of the path, except the starting and end points, is contained in the interior of the punctured $(n,\infty)$-gon;
    \item $\alpha$ does not cross itself, that is there are no $t_1,t_2 \in [0,1]$ such that $\alpha(t_1)=\alpha(t_2)$ unless $t_1=0, t_2=1$ or vice versa.
    \item $(k,b) \neq (h,a+1)$.
\end{enumerate}

Two edges $((h_1,a_1),(k_1,b_1),\alpha)$ and $((h_2,a_2),(k_2,b_2),\beta)$ are said to be \emph{equivalent} if $((h_1,a_1),(k_1,b_1)) = ((h_2,a_2),(k_2,b_2))$ and $\alpha$ is homotopic to $\beta$. Let $\mathcal E_{n,\infty}$ be the set of equivalence classes of edges in $\mP_{n,\infty}$ and note that an element in $\mathcal E_{n,\infty}$ is uniquely determined by a pair of marked points $((h,a),(k,b))$.
We write elements of $\mathcal E_{n,\infty}$ as $E_{(h,a),(k,b)}$.
Then we define the set of \emph{tagged edges} as:
\[ \mathcal E_{n,\infty}' = \{ {E_{(h,a),(k,b)}^\epsilon} \mid E_{(h,a),(k,b)} \in E, \epsilon= \pm 1 \text{ and } \epsilon =1 \text{ if } (h,a) \neq (k,b) \}. \]
For $\mP_{\overline{n,\infty}}$, we define the set of tagged edges similarly and denote it $\mathcal{E}'_{\overline{n,\infty}}$.
\end{definition}

\begin{notation}\label{note:puncture edges}
We will draw tagged edges with same start and endpoint $E_{(h,a),(h,a)}^\epsilon$ as segments from the marked point $(h,a)$ to the puncture in the middle of $\mP_{n,\infty}$, (or $\mP_{\overline{n,\infty}}$)  with a tag on it if $\epsilon=-1$. See Figure~\ref{fig:punctured infinity-gon triangulations}.
\end{notation}

Elementary moves and translation are operations on the set of tagged edges, which are defined below.
These are natural generalizations of the notions in \cite{schiffler2008geometric} from finite to the infinite case. Note that the vertices are always arranged in a cyclic anticlockwise order. 
For the completed $(n,\infty)$-gon, we use the convention $\infty \pm k = \infty$, for any $k \in \ZZ$.

\begin{definition}\label{def:elementary move}
An \textit{elementary move} is a pair of distinct tagged edges $\left(E_{(h_1,a_1),(k_1,b_1)}^\epsilon, E_{(h_2,a_2),(k_2,b_2)}^\lambda\right)$ such that one of the following conditions is satisfied:
\begin{enumerate}[label=(\roman*)]
    \item if $h_1=k_1$ and $b_1=a_1+2$, then 
    $E_{(h_2,a_2),(k_2,b_2)}^\lambda=E_{(h_1,a_1-1),(h_1,b_1)}$;
    \item if $h_1 \neq k_1$ or $\left( h_1=k_1 \text{ and }a < a+3\leq b < a-1 \text{ in the cyclic order}\right)$, then
    $E_{(h_2,a_2),(k_2,b_2)}^\lambda=E_{(h_1,a_1-1),(k_1,b_1)}$ or\\ $E_{(h_2,a_2),(k_2,b_2)}^\lambda =E_{(h_1,a_1),(k_1,b_1-1)}$;
    \item if $h_1=k_1$ and $b_1=a_1-1$, then
    $E_{(h_2,a_2),(k_2,b_2)}^\lambda= E_{(k_1,a_1),(k_1,b_1-1)}$ or $E_{(h_2,a_2),(k_2,b_2)}^\lambda= E_{(k_1,b_1),(k_1,b_1)}^{+1}$ or $E_{(h_2,a_2),(k_2,b_2)}^\lambda= E_{(k_1,b_1),(k_1,b_1)}^{-1}$;
    \item if $(h_1,a_1)=(k_1,b_1)$, then $E_{(h_2,a_2),(k_2,b_2)}^\lambda= E_{(h_1,a_1),(h_1,a_1-1)}$.
\end{enumerate}
\end{definition}

\begin{definition}\label{def:translation}
The \textit{translation} $\tau$ is a bijection on the set of tagged edges $\tau:\mathcal E_{n,\infty}\to \mathcal E_{n,\infty}$ (or $\mathcal E_{\overline{n,\infty}}\to \mathcal E_{\overline{n,\infty}}$) defined by
\[ \tau(E_{(h,a),(k,b)}^\epsilon) =\begin{cases} E_{(h,a+1),(k,b+1)} & \text{ if } (h,a) \neq (k,b)\\
E_{(h,a+1),(h,a+1)}^{-\epsilon} & \text{ if } (h,a)=(k,b). \end{cases}\]
\end{definition}

The following relation between elementary moves and translation holds as it does in the finite case.

\begin{lemma}\label{lem:elementary move and translation}
Let $E_{(h_1,a_1),(k_1,b_1)}^\epsilon, E_{(h_2,a_2),(k_2,b_2)}^\lambda$ be in $\mathcal E'_{n,\infty}$. Then we have that $(E_{(h_1,a_1),(k_1,b_1)}^\epsilon, E_{(h_2,a_2),(k_2,b_2)}^\lambda)$ is an elementary move if and only if $ (\tau E_{(h_2,a_2),(k_2,b_2)}^\lambda ,E_{(h_1,a_1),(k_1,b_1)}^\epsilon)$ is an elementary move.
\end{lemma}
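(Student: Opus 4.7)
The plan is to establish both implications by a direct case-by-case verification, following the four cases of Definition~\ref{def:elementary move}. The key tool is the explicit formula for $\tau$ from Definition~\ref{def:translation}: on a non-loop edge it shifts both position indices by $+1$, and on a puncture-loop it shifts the index by $+1$ and flips the tag $\epsilon \mapsto -\epsilon$. Since the conditions defining an elementary move involve only the thread indices $h_i, k_i$, the differences $b_i - a_i$ and the relative positions of the two edges, shifting everything by $+1$ leaves the configuration invariant, and this is what drives the proof.

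The plan is to prove first the implication $(\Rightarrow)$: assuming $(E_1^\epsilon, E_2^\lambda)$ is an elementary move, compute $\tau E_2^\lambda$ explicitly and verify that the pair $(\tau E_2^\lambda, E_1^\epsilon)$ falls into one of the four cases of Definition~\ref{def:elementary move}. For instance, in case (i), where $b_1 = a_1 + 2$ and $E_2 = E_{(h_1, a_1 - 1),(h_1, a_1 + 2)}$, one computes $\tau E_2 = E_{(h_1, a_1),(h_1, a_1 + 3)}$, and the pair $(\tau E_2, E_1)$ matches the second option in case (ii). Similarly, in case (iv), where $E_1$ is a puncture-loop and $E_2 = E_{(h_1, a_1),(h_1, a_1 - 1)}$, one computes $\tau E_2 = E_{(h_1, a_1 + 1),(h_1, a_1)}$ which has $b = a - 1$, so $(\tau E_2, E_1)$ matches the loop-option in case (iii). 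Cases (ii) and (iii) each split into sub-options, and I would handle each by computing $\tau E_2$ and matching against the four-case list; the transition between cases (iii) and (iv) is where the tag-flip in the definition of $\tau$ on puncture-loops becomes essential and must be tracked carefully.

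For the converse $(\Leftarrow)$, the plan is to carry out the analogous computation: given $(\tau E_2^\lambda, E_1^\epsilon)$ an elementary move, I compute $\tau^{-1}$ applied to the first edge and match the resulting pair against the four cases. Because $\tau$ is a bijection on $\mathcal{E}'_{n,\infty}$ and each case of Definition~\ref{def:elementary move} is characterized by a purely local translation-invariant configuration, the case analysis runs in parallel with the forward direction, with cases mapped to each other by the involution that sends the forward case labels $(\mathrm{i}) \leftrightarrow (\mathrm{ii}), (\mathrm{iii}) \leftrightarrow (\mathrm{iv})$ (in the sense suggested by the case (i)–(ii) and (iii)–(iv) matching above).

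The main obstacle will be the bookkeeping in case (ii), where the cyclic-order condition $a < a + 3 \leq b < a - 1$ must be tracked under the $+1$ shift induced by $\tau$. When the two endpoints lie on the same thread this is straightforward, but when $h_1 \neq k_1$ one has to confirm that shifting position indices does not move an endpoint across an accumulation point into a different thread; this is handled by treating the thread index $h$ and the position index $a$ as independent coordinates, noting that $\tau$ acts only on the latter. Once these cyclic-order checks are in place, the tag-flip in the passage between cases (iii) and (iv) is a finite combinatorial check, and the lemma follows.
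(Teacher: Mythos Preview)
The paper does not actually prove this lemma: it states it with the remark that ``the following relation between elementary moves and translation holds as it does in the finite case,'' implicitly deferring to Schiffler's work \cite{schiffler2008geometric}. Your case-by-case verification is exactly the kind of argument needed to fill in that deferral, and your sample checks for cases (i) and (iv) are correct. The key structural observation you make---that the conditions in Definition~\ref{def:elementary move} depend only on thread labels and differences of position indices, hence are invariant under the uniform $+1$ shift that $\tau$ induces---is the right conceptual reason the lemma holds, and would let you compress the bookkeeping considerably if you wanted.

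One small correction: your claimed ``involution'' on case labels $(\mathrm{i}) \leftrightarrow (\mathrm{ii})$, $(\mathrm{iii}) \leftrightarrow (\mathrm{iv})$ is not quite a bijection of labels. Case (ii) with the option $E_2 = E_{(h_1,a_1),(k_1,b_1-1)}$ typically sends $(\tau E_2, E_1)$ back to case (ii), not case (i); and in case (iii) the first sub-option (the non-loop one) stays in case (iii) after applying $\tau$. So the case-flow is a directed graph among sub-options rather than a clean pairing. This does not affect correctness---each sub-option still lands in \emph{some} case of the definition---but you should track the sub-options individually rather than relying on a labelling symmetry.
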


In Figures~\ref{fig:translation quiver for infty-gon}, \ref{fig:translation quiver for (3,infty)-gon}, \ref{fig:translation quiver for completed infty-gon}, and \ref{fig:translation quiver for completed (3,infty)-gon} we see the quiver whose vertices are tagged edges and arrows are elementary moves for $\mP_\infty=\mP_{1,\infty}$, $\mP_{\overline{1,\infty}}$, $\mP_{3,\infty}$, and $\mP_{\overline{3,\infty}}$, respectively.
These are stable translation quivers.

\begin{figure}
\centering
    \input{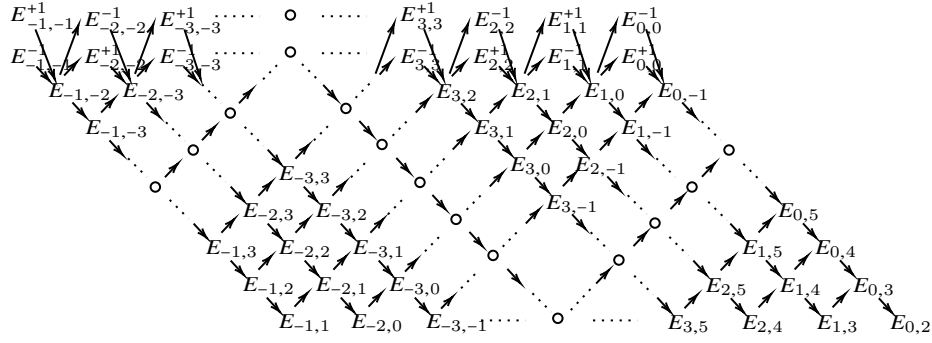}
    \caption{The stable translation quiver for $\mP_\infty=\mP_{1,\infty}$, where we have omitted the arrows from the last column to the first column.
    Each arrow correspond to an elementary move.
    The translation $\tau$ takes a vertex to the next vertex to the left without moving up or down.
    If $E_{i,j}$ has $i=-1$ and $j\neq -1$, then $\tau$ takes $E_{-1,j}$ to the vertex in the last column and same row: $E_{0,j+1}$.
    And, $\tau E^\epsilon_{-1,-1}=E^{-\epsilon}_{0,0}$.}
    \label{fig:translation quiver for infty-gon}
\end{figure}

\begin{figure}[h]
\centering
    \input{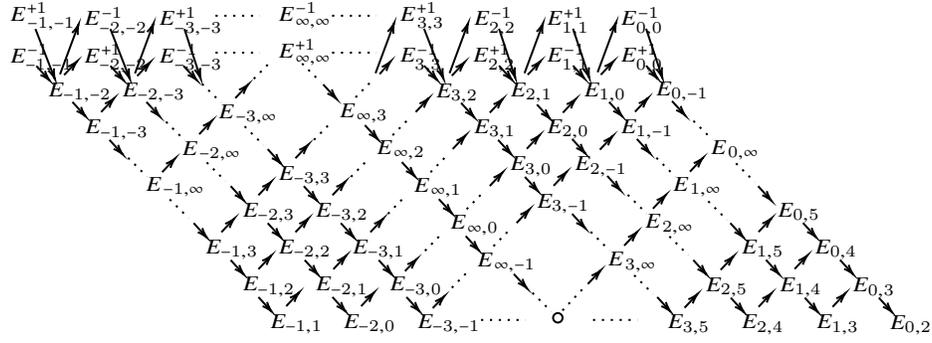}
    \caption{The stable translation quiver for $\mathcal P_{\overline{1,\infty}}$, where we have omitted the arrows from the last column to the first column.
    The translation $\tau$ slides $E^\epsilon_{i,j}$ up or down the diagonal if either $i$ or $j$ is $\infty$, but not both.
    The vertices $E^{-1}_{\infty,\infty}$ and $E^{+1}_{\infty,\infty}$ are swapped by $\tau$.
    Otherwise, $\tau$ acts the same as in Figure~\ref{fig:translation quiver for infty-gon}.}
    \label{fig:translation quiver for completed infty-gon}
\end{figure}

\begin{sidewaysfigure*}
    \includegraphics[width = 200mm]{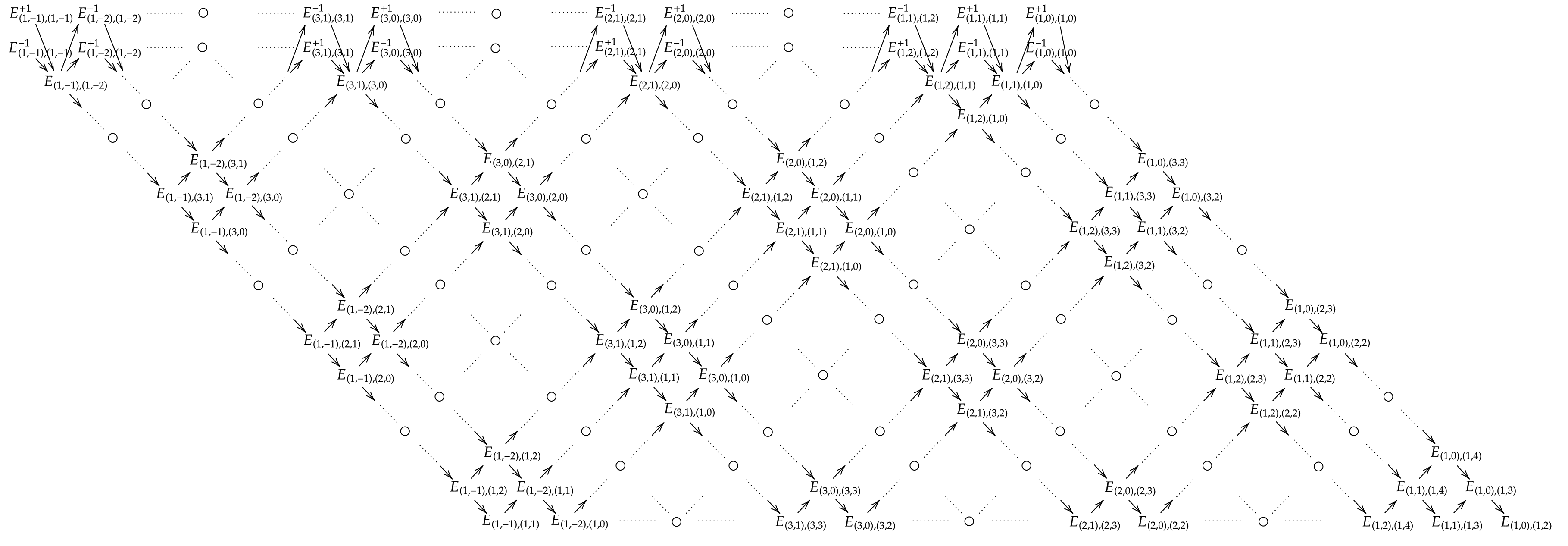}
    \caption{The stable translation quiver for $\mP_{3,\infty}$, where we have omitted the arrows from the last column to the first column.
    Each arrow correspond to an elementary move.
    The translation $\tau$ takes a vertex to the next vertex to the left without moving up or down.
    If $E_{(i,j)(k,l)}$ has $(i,j)=(1,-1)$ and $(k,l)\neq (1,-1)$, then $\tau$ takes $E_{(1,-1),(k,l)}$ to the vertex in the last column and same row: $E_{(1,0),(k,l+1)}$.
    And, $\tau E^\epsilon_{(1,-1)(1,-1)} = E^{-\epsilon}_{(1,0)(1,0)}$.}
    \label{fig:translation quiver for (3,infty)-gon}
\end{sidewaysfigure*}

\begin{sidewaysfigure*}
    \includegraphics[width = 200mm]{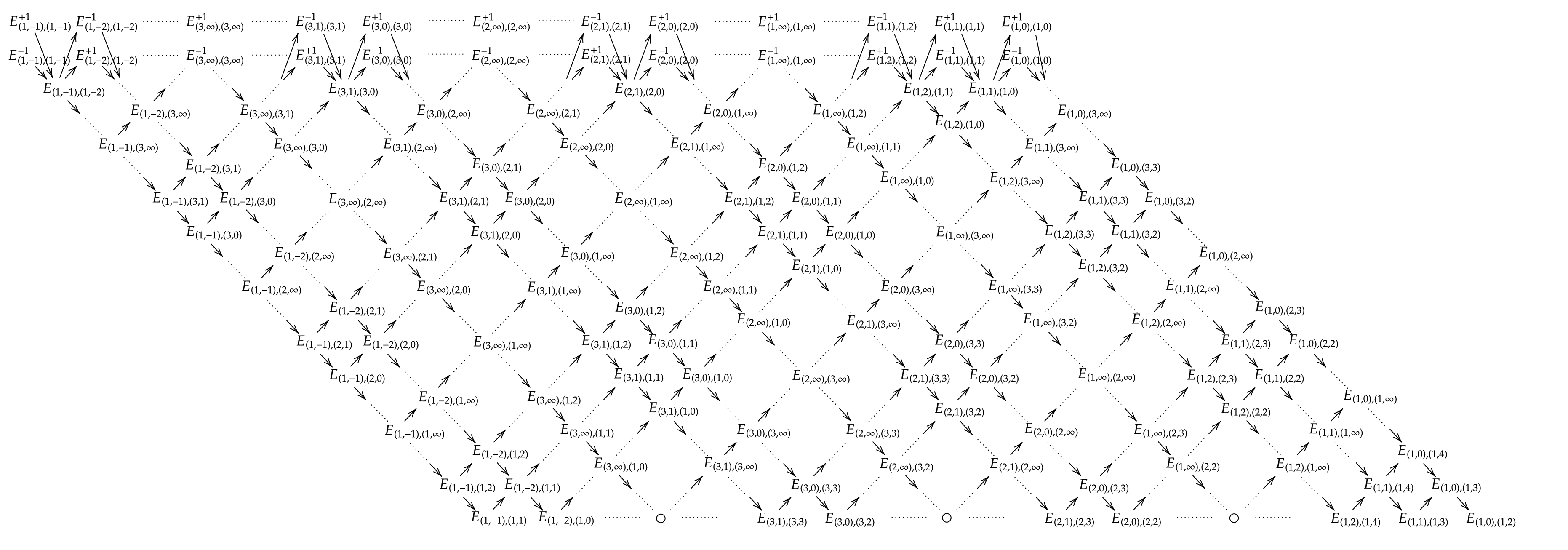}
    \caption{The stable translation quiver for $\mP_{\overline{3,\infty}}$, where we have omitted the arrows from the last column to the first column.
    Each arrow correspond to an elementary move.
    The translation $\tau$ slides vertices with $(i,\infty)$ in the subscript up and down the diagonals.
    Vertices of the form $E_{(i,\infty)(j,\infty)}$ are fixed by $\tau$ if $i\neq j$.
    Vertices of the form $E^\pm_{(i,\infty)(i,\infty)}$ are exchanged with $E^\mp_{(i,\infty)(i,\infty)}$.
    Otherwise, $\tau$ acts as it does in Figure~\ref{fig:translation quiver for (3,infty)-gon}.}
    \label{fig:translation quiver for completed (3,infty)-gon}
\end{sidewaysfigure*}

In order to introduce triangulations of $\infty$-gons, let us first recall the notion of crossing numbers. 

\begin{definition}\label{def:crossing number}
Let $E= E_{(h_1,a_1),(k_1,b_1)}^\epsilon$ and $F=E_{(h_2,a_2),(k_2,b_2)}^\lambda$  be two distinct tagged edges in $\mathcal E'_{n,\infty}$ (or $\mathcal E'_{\overline{n,\infty}}$). The \textit{crossing number} $\mathfrak{c}(E,F)$ is the number of intersections of $E$ and $F$ in the interior of the punctured $(n,\infty)$-gon $\interior(\mP_{n,\infty})$ (or completed $(n,\infty)$-gon $\interior(\mP_{n,\infty})$). More precisely:
\begin{enumerate}[label=(\roman*)]
    \item If $(h_1,a_1) \neq (k_1,b_1)$ and $(h_2,a_2) \neq (k_2,b_2)$, then
    \[ \mathfrak{c}(E,F) = \min\{ |\alpha \cap \beta \cap \interior(\mP_{n,\infty})| \mid ((h_1,a_1),(k_1,b_1),\alpha)\in E, ((h_2,a_2),(k_2,b_2),\beta)\in F\}; \]
    \item If $(h_1,a_1)=(k_1,b_1)$ and $(h_2,a_2)\neq (k_2,b_2)$, let $\alpha$ be the segment from $(h_1,a_1)$ to the puncture in $\mP_{n,\infty}$. Then
    \[ \mathfrak{c}(E,F) = \min\{ | \alpha \cap \beta \cap \interior(\mP_{n,\infty})| \mid ((h_2,a_2),(k_2,b_2),\beta)\in F \}; \]
    \item If $(h_1,a_1)\neq(k_1,b_1)$ and $(h_2,a_2)= (k_2,b_2)$, let $\beta$ be the segment from $(h_2,a_2)$ to the puncture in $\mP_{n,\infty}$. Then
    \[ \mathfrak{c}(E,F) = \min\{ | \alpha \cap \beta \cap \interior(\mP_{n,\infty})| \mid ((h_1,a_1),(k_1,b_1),\alpha)\in E \}; \]
    \item If $(h_1,a_1)=(k_1,b_1)$ and $(h_2,a_2)=(k_2,b_2)$, then
    \[ \mathfrak{c}(E,F) = \begin{cases} 1 & \text{ if } (h_1,a_1) \neq (h_2,a_2) \text{ and } \epsilon \neq \lambda\\
    0 & \text{ otherwise}. \end{cases}\]
\end{enumerate}
We say that $E$ \emph{crosses} $F$ if $\mathfrak{c}(E,F) \neq 0$.
\end{definition}

Note that, in Case (i), $\mathfrak{c}(E,F)\neq 0$ is equivalent to
\[(h_2,a_2)<(k_1,b_1)<(k_2,b_2)
\quad \text{and} \quad
(h_1,a_1)<(h_2,a_2)<(k_1,b_1),\]
in the cyclic order of the vertices on the boundary.

\begin{definition}\label{def:triangulation}
A \textit{triangulation} of $\mP_{n,\infty}$ (or $\mP_{\overline{n,\infty}}$) is a maximal set $T\subset \mathcal{E}'_{n,\infty}$ (or $T\subset\mathcal{E}'_{\overline{n,\infty}}$) of non-crossing tagged edges.
\end{definition} 

Note that any triangulation $T$ of $\mP_{n,\infty}$ contains infinitely many tagged edges. Indeed suppose by contradiction that $|T|$ is finite. Then $\supp(T)$ is finite, and so there exists $a \in \ZZ$ such that $(1,a),(1,a+1), (1,a+2) \not\in \supp(T)$. Since $\mathfrak{c}(E_{(1,a+2),(1,a)},E)=0$ for every $E \in T$, $T$ is not a maximal set of non-crossing tagged, i.e. $T$ is not a triangulation.

Note also that, unlike the finite case, it is possible to construct a triangulation of $\mP_{m,\infty}$ that does not contain any edges that are drawn touching the puncture.
See Figure~\ref{fig:punctured infinity-gon triangulations} (right) for an example.

\medskip

We now construct a family of triangulations of $\mP_{n,\infty}$. For any $h \in \{1,\dots,n\}$ and $a \in \ZZ$, let $T(a)$ be the set of tagged edges with starting point in $(h,a)$, that is
\[ T(h,a) = \{ E_{(h,a),(h,a)}^{+1}, E_{(h,a),(h,a)}^{-1} \} \cup \{ E_{(h,a),(k,b)} \mid (k,b) \in  \{1,\dots,n\} \times \ZZ, (k,b) \neq \{(h,a),(h,a-1)\} \}. \]

\begin{figure}[h]
    \centering
    \input{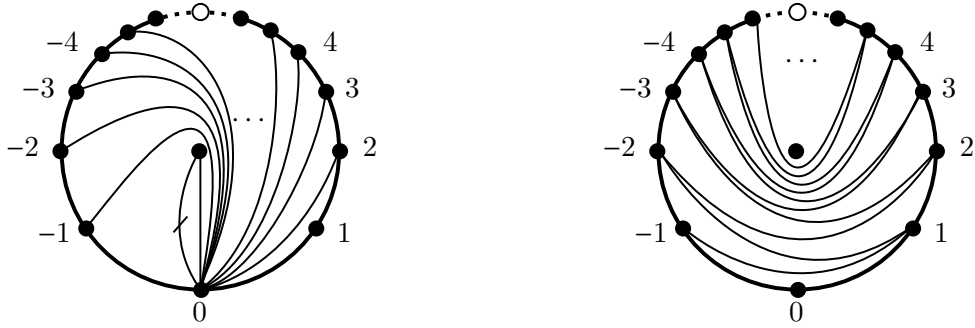}
    \caption{Two triangulations of $\Pinfty$.
    On the left is the triangulation $T(0)$.
    On the right is an example of a triangulation without an edge drawn to the puncture.}
    \label{fig:punctured infinity-gon triangulations}
\end{figure}

\begin{figure}[h]
    \centering
    \input{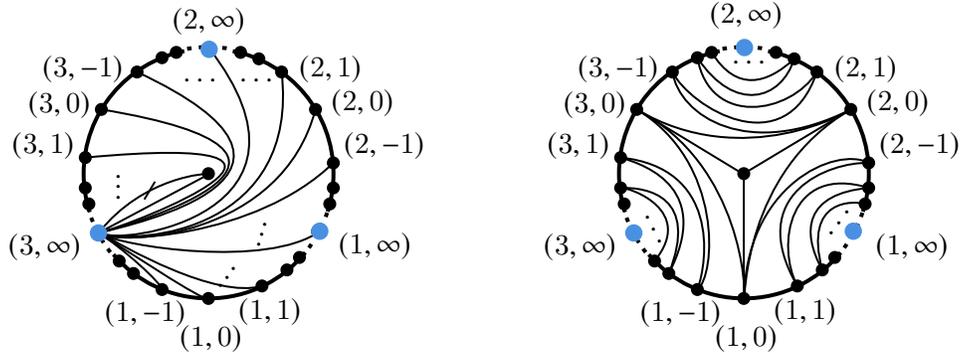}
    \caption{Two triangulations of $\mP_{\overline{3,\infty}}$.
    On the left is $T(3,\infty)$.
    On the right an example of a triangulation of without edges to the accumulation points on the boundary.
    }\label{fig:punctured 3-infinity-gon triangulations}
\end{figure}

\begin{lemma}
For any $h \in \{1,\dots,n\}$ and $a \in \ZZ$ ($a\in \ZZ\cup\{\infty\}$), $T(h,a)$ is a triangulation of $\mP_{n,\infty}$ (of $\mP_{\overline{n,\infty}}$).
\end{lemma}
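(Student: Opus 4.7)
The plan is to verify the two defining properties of a triangulation for $T(h,a)$: that its elements are pairwise non-crossing, and that $T(h,a)$ is maximal with this property.

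For non-crossing, I will use the fact that every tagged edge in $T(h,a)$ has $(h,a)$ as an endpoint. Two distinct non-puncture edges $E_{(h,a),(k_1,b_1)}$ and $E_{(h,a),(k_2,b_2)}$ can be represented by arcs sharing only the boundary point $(h,a)$, so their crossing number is zero by Definition~\ref{def:crossing number}(i). The two puncture tagged edges $E_{(h,a),(h,a)}^{+1}$ and $E_{(h,a),(h,a)}^{-1}$ have the same marked points and therefore crossing number zero by Definition~\ref{def:crossing number}(iv). For a puncture edge $E_{(h,a),(h,a)}^{\pm 1}$ and a non-puncture edge $E_{(h,a),(k,b)}$, the segment from $(h,a)$ to the puncture appearing in Definition~\ref{def:crossing number}(iii) and the arc from $(h,a)$ to $(k,b)$ can be drawn to meet only at $(h,a)$, giving crossing number zero.

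For maximality, I take an arbitrary tagged edge $F$ not in $T(h,a)$ and exhibit an element of $T(h,a)$ that crosses it. If $F=E_{(k,b),(k,b)}^\epsilon$ is a puncture tagged edge with $(k,b)\neq (h,a)$, I pick $\lambda\neq\epsilon$ and apply Definition~\ref{def:crossing number}(iv) to conclude $\mathfrak{c}(F,E_{(h,a),(h,a)}^\lambda)=1$. If $F=E_{(k_1,b_1),(k_2,b_2)}$ is a non-puncture edge whose endpoints both differ from $(h,a)$, the arc of $F$ separates the punctured disk into two pieces. In the sub-case where $(h,a)$ and the puncture lie on opposite sides, $F$ winds around the puncture relative to $(h,a)$ and therefore crosses each of the puncture edges $E_{(h,a),(h,a)}^{\pm 1}$. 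In the sub-case where $(h,a)$ and the puncture lie on the same side, the opposite cyclic arc contains infinitely many marked points; I pick one, $(k,b)$, strictly between $(k_1,b_1)$ and $(k_2,b_2)$ and different from $(h,a-1)$, so that $E_{(h,a),(k,b)}\in T(h,a)$ and a direct geometric check gives $\mathfrak{c}(F,E_{(h,a),(k,b)})\geq 1$. The edge cases where $F$ itself has $(h,a)$ as an endpoint but is excluded from $T(h,a)$, namely $F=E_{(h,a),(h,a-1)}$, are handled by the observation that this long arc must cross at least one of the puncture tagged edges $E_{(h,a),(h,a)}^{\pm 1}$.

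The main obstacle will be the witness-point argument in the last sub-case: one must guarantee that a valid marked point $(k,b)$ distinct from $(h,a-1)$ exists in the required portion of the cyclic order. This is straightforward here because the marked-point set of $\mathcal{P}_{n,\infty}$ (respectively $\mathcal{P}_{\overline{n,\infty}}$) contains infinitely many points between any two non-adjacent ones. The argument transfers verbatim to the completed case with $a\in\mathbb{Z}\cup\{\infty\}$ by treating the accumulation points as ordinary marked points in $\mathcal{M}$.
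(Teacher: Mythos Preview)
Your overall strategy is sound, and it differs from the paper's: the paper does not verify non-crossing at all and handles maximality with a single uniform witness, taking any $F=E_{(k,c),(\ell,d)}^\lambda\notin T(h,a)$, observing $(k,c)\neq(h,a)$, and checking that $E_{(h,a),(k,c-1)}\in T(h,a)$ crosses $F$. Your argument instead proceeds by a geometric case split on the type of $F$ and the relative position of $(h,a)$ and the puncture.

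There is, however, a genuine gap in your case enumeration. Recall that in $\mathcal{E}_{n,\infty}$ the pair of endpoints is \emph{ordered}: $E_{(h,a),(k,b)}$ and $E_{(k,b),(h,a)}$ are distinct tagged edges, lying on opposite sides of the puncture. The set $T(h,a)$ contains only the former. Hence the tagged edges with $(h,a)$ as an endpoint but not belonging to $T(h,a)$ are not just $E_{(h,a),(h,a-1)}$ as you claim, but also every $E_{(k,c),(h,a)}$ with $(k,c)\notin\{(h,a),(h,a-1)\}$. Your phrase ``namely $F=E_{(h,a),(h,a-1)}$'' therefore overlooks an entire infinite family, and these edges do not fall under your earlier sub-case ``both endpoints differ from $(h,a)$''.

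The fix is straightforward and fits your own framework. For $F=E_{(k,c),(h,a)}$, the region bounded by $F$ and the short clockwise boundary arc $(k,c),(k,c-1),\dots,(h,a+1),(h,a)$ contains the puncture. Any edge $E_{(h,a),(\ell,m)}\in T(h,a)$ with $(\ell,m)$ lying on the \emph{other} (counterclockwise) boundary arc from $(k,c)$ to $(h,a)$ must pass from the puncture side to the non-puncture side and therefore crosses $F$; one concrete choice is $(\ell,m)=(k,c+1)$, which works whenever it is distinct from $(h,a-1)$. The finitely many residual cases (e.g.\ $(k,c)=(h,a-2)$) are handled by choosing a different $(\ell,m)$, which is always possible since the relevant boundary arc contains infinitely many marked points. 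Alternatively, you could simply adopt the paper's witness $E_{(h,a),(k,c-1)}$ for this family.
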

\begin{proof}
We prove the lemma for $\mP_{n,\infty}$ as the proof for $\mP_{\overline{n,\infty}}$ is similar.

Suppose by contraction that there exists $E_{(k,c),(\ell,d)}^\lambda \in \mathcal{E}'_{n,\infty}\backslash T(h,a)$ such that $\mathfrak{c}(M,E_{c,d}^\lambda) = 0$ for every $M \in T$. Note that $(k,c) \neq (h,a)$ since $E_{(k,c),(\ell,d)}^\lambda \not\in T$. But then $E_{(h,a),(k,c-1)}\in T(h,a)$ since $(k,c-1) \neq a-1$ and $\mathfrak{c}(E_{(h,a),(k,c-1)}, E_{(k,c),(\ell,d)}) = 1$. It follows that $T(a)$ is a maximal set of non-crossing edges.
\end{proof}

\begin{remark}
Note that, there is no $m \in \ZZ$ such that $\tau^m = \id$.
More generally, there is no tagged edge $E \in \mathcal{E}_{n,\infty}$ (or $E \in \mathcal{E}_{\overline{n,\infty}}$) and there is no $m \in \ZZ$ such that $\tau^m(E)=E$.
That is, it is not possible to apply the translation $\tau$ a finite number of times to get back to the tagged edge we started with. This is one of the main differences with the finite case studied in \cite{schiffler2008geometric}.
\end{remark}

\begin{definition}\label{def:mutation on punctured infinity-gon}
    Let $T$ be a triangulation of $\mP_{n,\infty}$ (or $\mP_{\overline{n,\infty}}$).
    A \emph{mutation} is the act of removing some $E\in T$ and replacing it with $E^*\notin T$ such that $(T\setminus\{E\})\cup\{E^*\}$ is also a triangulation of $\mP_{n,\infty}$ (or $\mP_{\overline{n,\infty}}$).
\end{definition}

See Figures~\ref{fig:mutation on infty-gon}, \ref{fig:mutation on completed infty-gon}, and \ref{fig:mutation on completed (3,infty)-gon} for examples of mutation of triangulations in $\mP_\infty=\mP_{1,\infty}$, $\mP_{\overline{1,\infty}}$, and $\mP_{\overline{3,\infty}}$, respectively.

\begin{figure}[h]
    \centering
    \input{Mutation_triangulation_1,infty}
    \caption{Mutation of a triangulation of $\mP_{1,\infty}$.
    We replace the purple edge in the left picture with the red edge in the right picture.}
    \label{fig:mutation on infty-gon}
\end{figure}

\begin{figure}[h]
    \centering
    \input{Mutation_triangulation_compl_1,infty}
    \caption{Mutation of a triangulation of $\mP_{\overline{1,\infty}}$.
    We replace the purple edge in the left picture with the red edge in the right picture.}
    \label{fig:mutation on completed infty-gon}
\end{figure}

\begin{figure}[h]
    \centering
    \input{Mutation_triangulation_3,_infty}
    \caption{Mutation of a triangulation of $\mP_{3,\infty}$.
    We replace the purple edge in the left picture with the red edge in the right picture.}
    \label{fig:mutation on completed (3,infty)-gon}
\end{figure}

\subsection{Thread quivers of type $\mathbb{D}_{n,\infty}$ and $\mathbb{D}_{\overline{n,\infty}}$}\label{sec:D infinity}
\begin{definition}\label{def:D infinity}
    Let $i\in\NN_{>0}$ and let $Q$ be the quiver of type $D_4$ with one arrow in to the center point and two arrows out.
    Denote the arrow in by $\alpha$; denote the other two arrows by $\beta$ and $\gamma$.
We also denote $\mathbb{D}_{n,\infty}$ for the thread quiver obtained from $Q$ by threading $\alpha$ with $\mathcal{L}_i$.
    When $i=1$ we write $\mathbb{D}_\infty := \mathbb{D}_{1,\infty}$.
\end{definition}

We will label our quiver $\mathbb{D}_{n,\infty}$ as follows (with a bend to save space):\label{D}
\begin{displaymath}{\small 
  \xymatrix@C=2ex@R=3ex{
    (1,-1) \\
    (1,-2) \ar[u] \ar[d] &
    (1,-3) \ar[l] &
    \cdots \circ \cdots \ar[l] &
    (i,1) \ar[l] & (i,0) \ar[l] & (i,-1) \ar[l] &
    \cdots \ar[l] &
    (2,1) \ar[l] & (2,0) \ar[l] \\
    (1,-1'). & & & & & (1,1) \ar[r] & (1,2) \ar[r] & \cdots \circ \cdots \ar[r] & (2,-1) \ar[u]
    }}
\end{displaymath}
Using similar methods as in 
\cite{hanson2020decomposition}, we see that the indecomposable pointwise finite-dimensional representations of $\mathbb{D}_{n,\infty}$ have three forms.
\begin{enumerate}
    \item The first is $P_{(j,k)}$, a projective indecomposable. For $(1,1)$ and $(1,1')$ we have
    \begin{align*}
        P_{(1,-1)} &= \xymatrix@C=2ex@R=3ex{\Bbbk \\ 0 \ar[u] \ar[d] & 0 \ar[l] & \cdots \ar[l] & 0\ar[l] \\ 0}
        &
        P_{(1,-1')} &= \xymatrix@C=2ex@R=3ex{0 \\ 0 \ar[u] \ar[d] & 0 \ar[l] & \cdots \ar[l] & 0\ar[l] \\ \Bbbk}
    \end{align*}
    For all other $(j,k)$ we have
    \begin{displaymath}
        P_{(j,k)} = \xymatrix@C=2ex@R=3ex{\Bbbk \\ \Bbbk \ar[u] \ar[d] & \cdots \ar[l] & \Bbbk \ar[l] & 0 \ar[l] & \cdots \ar[l] & 0\ar[l] \\ \Bbbk,}
    \end{displaymath}
    where the furthest $\Bbbk$ to the right is at $(j,k)$.
    \item Next we have $M_{[j,k,\ell,m]}$ where there is a path $(\ell,m) \to (j,k)$ and $(\ell,m)$ is neither $(1,-1)$ nor $(1,-1')$.
    If $(\ell,m)=(1,1)$ then $M_{[j,k,\ell,m]}$ is injective.
    The $[j,k,\ell,m]$ subscript is short for the pair $(j,k)$ and $(\ell,m)$.
    When $(j,k)$ is either $(1,-1)$ or $(1,-1')$ we have
    \begin{align*}
        M_{[1,-1,\ell,m]} &= \xymatrix@C=2ex@R=3ex{\Bbbk \\ \Bbbk \ar[u] \ar[d] & \cdots \ar[l] & \Bbbk \ar[l] & 0 \ar[l] & \cdots \ar[l] & 0\ar[l] \\ 0}
        \\
        M_{[1,-1',\ell,m]} &= \xymatrix@C=2ex@R=3ex{0 \\ \Bbbk \ar[u] \ar[d] & \cdots \ar[l] & \Bbbk \ar[l] & 0 \ar[l] & \cdots \ar[l] & 0\ar[l] \\ \Bbbk,}
    \end{align*}
    where the last copy of $\Bbbk$ to the right is at $(\ell,m)$.
    For all other $(j,k)$, the representation $M_{[j,k,\ell,m]}$ is the bar representation from $(\ell,m)$ to $(j,k)$.
    \item Finally we have $M_{(j,k)^2(\ell,m)}$ where $(j,k)\notin\{(1,-1),(1,-1')\}$ is strictly to the left of $(\ell, m)$ in the following sense.
    \begin{displaymath}
        M_{(j,k)^2(\ell,m)} = \xymatrix@C=2ex@R=3ex{
        \Bbbk \\
        \Bbbk^2 \ar[u] \ar[d] & \cdots \ar[l] & \Bbbk^2 \ar[l] & \Bbbk \ar[l] & \cdots \ar[l] & \Bbbk \ar[l] & 0 \ar[l] & \cdots \ar[l] & 0 \ar[l] \\
        \Bbbk.
        }
    \end{displaymath}
   The furthest $\Bbbk^2$ to the right is at $(j,k)$ and the furthest $\Bbbk$ to the right is at $(\ell,m)$.
\end{enumerate}
These indecomposable representations are the infinite analogues to the indecomposable representations of a finite type $\mathbb{D}$ quiver.

\subsubsection{Local Dual}
In the classical case, one works with $\Hom_\Bbbk(M,\Bbbk)$ where $M$ is a finite-dimensional representation of a finite quiver.
However, this will not work for us.
In general, when we move a coproduct in the first variable of $\Hom$ to the outside, it turns into a product.
We do not want to work with $\prod_{n\in \mathbb{D}_{n,\infty}}\Hom_\Bbbk(M_n,\Bbbk)$.
Instead, we define our local dual to be $\bigoplus_{n\in \mathbb{D}_{n,\infty}}\Hom_\Bbbk(M_n,\Bbbk)$ on objects.
This definition yields a pair of local dual functors:
\begin{align*}
    D: \Rep_\Bbbk^\text{pwf}(\mathbb{D}_{n,\infty})&\to \Rep_\Bbbk^\text{pwf}(\mathbb{D}_{n,\infty}^\text{op})
    &
    D: \Rep_\Bbbk^\text{pwf}(\mathbb{D}_{n,\infty}^\text{op}) &\to
    \Rep_\Bbbk^\text{pwf}(\mathbb{D}_{n,\infty}).
\end{align*}
It is straight forward to check that both local dual functors are an equivalence of abelian categories.
This is a generalization of the local dual in the finite case where we pull out the direct sum before generalizing.

\subsubsection{Auslander--Bridger Transpose}
We know $\Hom$ behaves with respect to coproducts in the second variable.
Let $M$ be an object in $\rep_\Bbbk^\text{fp}(\mathbb{D}_{n,\infty})$.
Then $M$ has a projective resolution in $\rep_\Bbbk^\text{fp}(\mathbb{D}_{n,\infty})$, \[\bigoplus_{l=1}^n P_\ell \hookrightarrow \bigoplus_{j=1}^m P_j \twoheadrightarrow M,\] where each $P_j$ and $P_\ell$ are representable.
Then we may consider the cokernel $Tr M$ of the following map in $\rep_\Bbbk^\text{fp}(\mathbb{D}_{n,\infty})$: \[ TrM \twoheadleftarrow \bigoplus_{l=1}^n \left[\Hom_{\rep_\Bbbk^\text{fp}(\mathbb{D}_{n,\infty})}\left(P_\ell, \bigoplus_{k\in \mathbb{D}_{n,\infty}} P_k\right) \right]\hookleftarrow \bigoplus_{j=1}^m \left[\Hom_{\rep_\Bbbk^\text{fp}(\mathbb{D}_{n,\infty})}\left(P_j, \bigoplus_{k\in \mathbb{D}_{n,\infty}} P_k\right)\right].\]
It follows from similar proofs to the classical case that there are functors
\begin{align*}
    Tr: \rep_\Bbbk^\text{fp}(\mathbb{D}_{n,\infty}) &\to  \rep_\Bbbk^\text{fp}(\mathbb{D}_{n,\infty}^\text{op})
    &
    Tr: \rep_\Bbbk^\text{fp}(\mathbb{D}_{n,\infty}^\text{op}) &\to \rep_\Bbbk^\text{fp}(\mathbb{D}_{n,\infty}).
\end{align*}
These are \emph{not} equivalences.
In particular $Tr M = 0$ if and only if $M$ is projective.

\subsubsection{Auslander--Reiten translation}
Analogous to the finite case, we define $\tau:\rep_\Bbbk^\text{fp}(\mathbb{D}_{n,\infty})\to\rep_\Bbbk^\text{fp}(\mathbb{D}_{n,\infty})$ to be the composition $DTr$: \[ \rep_\Bbbk^\text{fp}(\mathbb{D}_{n,\infty}) \underbrace{\stackrel{Tr}{\to} \rep_\Bbbk^\text{fp}(\mathbb{D}_{n,\infty}^\text{op}) \stackrel{D}{\to}}_{\tau} \rep_\Bbbk^\text{fp}(\mathbb{D}_{n,\infty}).\]

By \cite[Proposition 7.20]{berg2014threaded}, $k\mathbb{D}_{n,\infty}$ is a semi-hereditary dualizing $k$-variety, for all $1\leq i < \infty$.
In particular, this means that $\mathcal{D}^b(\mathbb{D}_{n,\infty}):=\mathcal{D}^b(\rep_\Bbbk^\text{fp}(\mathbb{D}_{n,\infty}))$ is a triangulated Krull--Remak--Schmidt category whose indecomposable objects are shifts of indecomposable objects in $\rep_\Bbbk^\text{fp}(\mathbb{D}_{n,\infty})$.

Let $M[n]$ be an indecomposable object in $\mathcal{D}^b(\mathbb{D}_{n,\infty})$.
We define 
\[\tau M[n] := \begin{cases}
    (\tau M)[n] & M \text{ is not projective in }\rep_\Bbbk^\text{fp}(\mathbb{D}_{n,\infty}) \\
    I_j[n-1] & M=P_j \text{ is projective in }\rep_\Bbbk^\text{fp}(\mathbb{D}_{n,\infty}).
\end{cases}\]

\subsection{Cluster categories of infinite type $\mathbb{D}$}
From \cite{berg2014threaded} we know that there exists some Serre functor on $\mathcal{D}^b(\mathbb{D}_{n,\infty})$.
We will now show that this functor is $\tau^{-1}[1]$.

Let $M[m]$ and $N[n]$ be indecomposables in $\mathcal{D}^b(\mathbb{D}_{n,\infty})$.
Consider also $N'[n']:= \tau^{-1} N[n]$.
In $\rep_\Bbbk^\text{fp}(\mathbb{D}_{n,\infty})$ we have the projective resolutions
\begin{align*}
    P' \hookrightarrow &P \twoheadrightarrow M \\
    Q' \hookrightarrow &Q \twoheadrightarrow N \\
    R' \hookrightarrow &R \twoheadrightarrow N'
\end{align*}
where, $P$, $P'$, $Q$, $Q'$, $R$, and $R'$ are each a finite sum of representable indecomposable projectives.
In particular, representable indecomposable projectives have a top.
Let $k$ be the number of elements in the set \[ \left\{j\in \mathbb{D}_{n,\infty}\, \left|\, j\text{ satisfies any of }\begin{array}{l} j=t(\beta) \\ j=t(\gamma) \\ j=s(\beta)=s(\gamma) \\ j=s(\alpha) \\ j\text{ is the top of an ind.~sum'd~of }P\oplus P'\oplus Q\oplus Q'\oplus R\oplus R'\end{array} \right. \right\} \]
and consider the fully faithful, additive, and exact embedding $F:\rep_\Bbbk (\mathbb{D}_k)\to \rep_\Bbbk^\text{fp}(\mathbb{D}_{n,\infty})$ given by sending $P_i$ in $\rep_\Bbbk (\mathbb{D}_k)$ to $P_i$ in $\rep_\Bbbk^\text{fp}(\mathbb{D}_{n,\infty})$.
Lift this embedding to a fully faithful, additive, and triangulated embedding $\widetilde{F}:\mathcal{D}^b(\mathbb{D}_k)\to \mathcal{D}^b(\mathbb{D}_{n,\infty})$ given by $L[\ell]\mapsto (F L)[\ell]$.

We see that $M$, $N$, and $N'$ are in the image of $F$.
By overloading notation, let $M$, $N$, and $N'$ be indecomposables in $\rep_\Bbbk(\mathbb{D}_k)$ such that $FM\cong M$, $FN\cong N$, and $FN'\cong N'$.
(That is, we think of $\mathcal{D}^b(\mathbb{D}_k)$ as a thick subcategory of $\mathbb{D}^b(\mathbb{D}_{n,\infty})$.)
By definition, notice that
\begin{align*}
    \tau N[n] &\cong \tau F(N[n])\\
    &\cong F(\tau N[n]) \\
    &\cong F(N'[n']).
\end{align*}
We already know that, in the classical case,
\[ \Hom_{\mathcal{D}^b(\mathbb{D}_k)}(N[n],M[m])
\cong
\Hom_{\mathcal{D}^b(\mathbb{D}_k)}(M[m],\tau^{-1}N[n+1]).\]
Since $\widetilde{F}$ is fully faithful,
\[ \Hom_{\mathcal{D}^b(\mathbb{D}_{n,\infty})}(N[n],M[m])
\cong
\Hom_{\mathcal{D}^b(\mathbb{D}_{n,\infty})}(M[m],\tau^{-1}N[n+1]).\]
Therefore, $\tau^{-1}[1]$ is the desired Serre functor and $\mathcal{D}^b(\mathbb{D}_{n,\infty})$ is 2-Calabi--Yau.
The category $\mathcal{C}(\mathbb{D}_{n,\infty}):= \mathcal{D}^b(\mathbb{D}_{n,\infty}) / \tau^{-1}[1]$ is also triangulated.

\subsection{The combinatorial model is correct}

Recall the notion of $\mathcal{P}_{n,\infty}$ from \S\ref{subsec:tagged_edges}, which is the punctured disk with infinite marked points on its boundary and $i$ two-sided accumulation points of the marked points. 
Here we show $\tau$ on each type of indecomposable in $\mathcal{C}(\mathbb{D}_{n,\infty})$:
\begin{align*}
    \tau P_{(j,k)} &= P_{(j,k)}[1] & \tau P_{(j,k)}[1] &= I_{(j,k)}=M_{[j,k,1,1]} \\
    \tau M_{[j,k,\ell,m]} &=M_{[j,k+1,\ell,m+1]} & \tau M_{(j,k)^2(\ell,m)} &= M_{(j,k+1)^2(\ell,m+1)} &\text{if } (j,k)\neq (1,-2) \\
    \tau M_{[1,-2,\ell,m]} &= M_{(\ell,m+1)^2(1,1)} & \tau M_{(1,-2)^2(\ell,m)} &= P_{(\ell,m+1)} &\text{if }(\ell,m)\neq (1,-2)\\
    \tau M_{[1,-1,\ell,m]} &= M_{[1,-1',\ell,m+1]} & \tau M_{[1,-1',\ell,m]} &= M_{[1,-1,\ell,m+1]} &\text{if }(\ell,m)\neq(1,-2) \\
    \tau M_{[1,-1,1,-2]} &= P_{(1,-1')} &  \tau M_{[1,-1',1,-2]} &= P_{(1,-1)} \\
    \tau M_{[1,-2,1,-2]} &= P_{(1,1)}.
\end{align*}

The bijection $\Phi$ from $\ind(\mathcal{C}(\mathbb{D}_{n,\infty}))$ to tagged edges in $\mathcal{P}_{n,\infty}$ is given as follows.
First we consider indecomposables sent to tagged edges of the form $E_{**}^\pm$:
\begin{align*}
    P_{(1,-1)} &\mapsto E_{(1,-1)(1,-1)}^+ & P_{(1,-1')} &\mapsto E_{(1,-1)(1,-1)}^- \\
    M_{[1,-1,\ell,m]} &\mapsto E_{(\ell,m)(\ell,m)}^+ & M_{[1,-1',\ell,m]} &\mapsto E_{(\ell,m)(\ell,m)}^- \\
    P_{(1,-1)}[1] &\mapsto E_{(1,0)(1,0)}^- & P_{(1,-1')}[1] &\mapsto E_{(1,0)(1,0)}^+.
\end{align*}
Notice the difference in sign between the projectives and shifted projectives.

And now we cover the remaining tagged edges:
\begin{align*}
    P_{(j,k)} &\mapsto E_{(1,-1)(j,k)} &
    M_{[j,k,\ell,m]} &\mapsto E_{(\ell,m)(j,k+2)} \\
    M_{(j,k)^2(\ell,m)} &\mapsto E_{(j,k)(\ell,m)} &
    P_{(j,k)}[1] &\mapsto E_{(1,0)(j,k+1)}.
\end{align*}

\begin{proposition}\label{prop:phi}
    The defined function $\Phi$ is a bijection from $\ind(\mathcal{C}(\mathbb{D}_{n,\infty}))$ to tagged edges in $\mathcal{P}_{n,\infty}$.
    Furthermore, $\tau\Phi = \Phi\tau$ and elementary moves correspond to irreducible maps.
\end{proposition}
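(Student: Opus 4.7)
The plan is to verify the three claims by an explicit case analysis, partitioning $\ind(\mathcal{C}(\mathbb{D}_{n,\infty}))$ according to the four families appearing in \S\ref{sec:D infinity}: the projectives $P_{(j,k)}$, the shifted projectives $P_{(j,k)}[1]$, the intervals $M_{[j,k,\ell,m]}$, and the ``band'' modules $M_{(j,k)^2(\ell,m)}$. These four families give a complete set of representatives of $\tau^{-1}[1]$-orbits in $\mathcal{D}^b(\mathbb{D}_{n,\infty})$, so indecomposables in $\mathcal{C}(\mathbb{D}_{n,\infty})$ correspond bijectively to their representatives in this fundamental domain, and every statement to be proved can be checked at the level of this domain.

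For bijectivity, injectivity is immediate from the explicit formulas defining $\Phi$: the four families are sent into disjoint subsets of $\mathcal{E}'_{n,\infty}$, distinguished by whether the edge is puncture-incident, by which of the coordinates $(1,-1)$ or $(1,0)$ appears as an endpoint, and by whether the two endpoints coincide. For surjectivity, I would read off from each tagged edge $E^{(\epsilon)}_{(h,a)(k,b)}$ the unique indecomposable mapping to it by inverting the formulas case by case, using the constraint $(k,b)\neq(h,a+1)$ in Definition~\ref{def:tagged edges} to exclude precisely those pairs that would force a zero module. For $\tau$-equivariance, I would march line-by-line through the table of $\tau$-values on indecomposables listed just above the statement of $\Phi$ and compare with Definition~\ref{def:translation}. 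The generic cases match by an immediate index shift; the interesting cases are when an indecomposable is (shifted) projective, so that $\tau$ is defined through a separate clause involving injectives, and when an interval with $(j,k)=(1,-2)$ collapses under $\tau$ onto a band module touching the two puncture vertices $(1,-1),(1,-1')$. These are exactly the cases in which the sign $\epsilon$ on the corresponding tagged edge flips, matching the puncture-incident clause of Definition~\ref{def:translation}.

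The main obstacle is the third claim: elementary moves on tagged edges correspond to irreducible maps in $\mathcal{C}(\mathbb{D}_{n,\infty})$. My plan is to reduce this to an AR-quiver computation. Since $\Bbbk\mathbb{D}_{n,\infty}$ is a semi-hereditary dualizing $\Bbbk$-variety, $\ind\mathcal{D}^b(\mathbb{D}_{n,\infty})$ is described by a mesh category analogous to Proposition~\ref{prop: properties mD}(c), with irreducible maps corresponding to arrows of the AR-quiver, and since $F=\tau^{-1}[1]$ acts as a graph automorphism the AR-quiver of $\mathcal{C}(\mathbb{D}_{n,\infty})$ is its quotient. It then suffices to identify the AR-quiver of $\mathcal{D}^b(\mathbb{D}_{n,\infty})$ with the stable translation quiver on $\mathcal{E}'_{n,\infty}$ whose arrows are elementary moves (as in Figures~\ref{fig:translation quiver for infty-gon} and \ref{fig:translation quiver for (3,infty)-gon}) via $\Phi$. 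For each indecomposable $X$, I would compute the almost-split triangle ending at $X$ in $\mathcal{D}^b(\mathbb{D}_{n,\infty})$ and read off the irreducible maps into and out of $X$, then verify on each of the four families that these match the elementary moves incident to $\Phi(X)$ listed in Definition~\ref{def:elementary move}. The delicate local calculations are at the $D_4$-type vertex of the thread quiver, where $\Phi(X)$ is puncture-incident and the mesh around $X$ has four terms matching the four clauses of Definition~\ref{def:elementary move}, and at each accumulation vertex, where there are no meshes and one must instead check that the infinite sequences of elementary moves crossing the accumulation point agree with sequences of irreducible morphisms between the associated thread-quiver representations. Once these local checks pass, $\tau$-equivariance and the construction of the orbit category transport the correspondence to $\mathcal{C}(\mathbb{D}_{n,\infty})$.
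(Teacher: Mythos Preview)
Your proposal is correct and is essentially the same approach as the paper's, which in fact dispatches the proposition in a single sentence: ``The proposition follows from straightforward bookkeeping.'' Your outline is precisely a detailed plan for that bookkeeping---partitioning into the four families, inverting $\Phi$ case by case, matching the listed $\tau$-table against Definition~\ref{def:translation}, and identifying the AR-quiver with the stable translation quiver on tagged edges.

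One small overcomplication: in $\mathbb{D}_{n,\infty}$ (as opposed to $\mathbb{D}_{\overline{n,\infty}}$) the accumulation points are \emph{not} vertices of the thread quiver, so there are no ``accumulation vertices'' requiring special handling; the poset $L_t$ is locally discrete everywhere, every vertex has a predecessor and successor, and the AR-quiver is locally of shape $\ZZ\mathbb{D}$ throughout. The delicate local check you flag at the $D_4$-type branch vertex is genuine, but the accumulation-point issue only arises in the completed case treated later.
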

\begin{proof}
    The proposition follows from straightforward bookkeeping.
\end{proof}

\begin{proposition}\label{prop:ext equals crossing}
    Let $M, N$ be in $\ind(\mathcal{C}(\mathbb{D}_{n,\infty}))$.
    Then, $\Phi(M)$ crosses $\Phi(N)$ if and only if
    \[ \dim_{\Bbbk}(\Ext(M,N)\oplus \Ext(N,M)) > 0. \]
\end{proposition}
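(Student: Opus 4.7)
The strategy is to reduce the claim to the finite type $\mathbb{D}_k$ case, where the correspondence between Ext-vanishing and non-crossing of tagged arcs in the punctured polygon is established by Schiffler \cite{schiffler2008geometric}. First, since $\mathcal{C}(\mathbb{D}_{n,\infty})$ inherits the Serre functor $\tau^{-1}[1]$ from $\mathcal{D}^b(\mathbb{D}_{n,\infty})$ and is therefore 2-Calabi--Yau, Serre duality yields
\[
\Ext^1_\mathcal{C}(M,N) \cong D\Ext^1_\mathcal{C}(N,M),
\]
so the quantity $\dim_\Bbbk(\Ext(M,N)\oplus\Ext(N,M))$ is positive if and only if $\Ext(M,N)\neq 0$. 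Since $\mathfrak{c}$ is symmetric in its arguments by Definition~\ref{def:crossing number}, it suffices to prove $\Ext(M,N)\neq 0 \iff \mathfrak{c}(\Phi(M),\Phi(N))\neq 0$.

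Second, I would fix representatives of $M$ and $N$ in $\mathcal{D}^b(\mathbb{D}_{n,\infty})$ and choose $k\in\NN_{>0}$ large enough so that their supports, together with those of their minimal projective resolutions, are contained in a single type-$\mathbb{D}_k$ subquiver of $\mathbb{D}_{n,\infty}$. This yields a fully faithful triangulated embedding $\widetilde{F}:\mathcal{D}^b(\mathbb{D}_k)\hookrightarrow\mathcal{D}^b(\mathbb{D}_{n,\infty})$ (constructed in the course of verifying $\tau^{-1}[1]$ is a Serre functor) such that $M,N$ both lie in its essential image. Full faithfulness lets us compute $\Ext^1_\mathcal{D}(M,N)$ inside $\mathcal{D}^b(\mathbb{D}_k)$. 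Passing to the orbit category is then handled by noting that $\Hom_\mathcal{C}(M,N[1]) = \bigoplus_{i\in\ZZ} \Hom_\mathcal{D}(F^iM,N[1])$ has only finitely many nonzero summands: the support and support of the projective resolution of $F^iM$ eventually have empty intersection with that of $N$, so for $k$ taken sufficiently large all relevant $F^iM$ remain inside the embedding.

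Third, using Proposition~\ref{prop:phi} (which guarantees $\Phi\tau=\tau\Phi$ and identifies elementary moves with irreducible maps), the restriction of $\Phi$ to the indecomposables in the image of $\widetilde{F}$ coincides, after a natural identification of the marked points of Schiffler's punctured $k$-gon with the relevant finite window of marked points of $\mathcal{P}_{n,\infty}$, with Schiffler's bijection for the finite type $\mathbb{D}_k$ cluster category. Because crossing in $\mathcal{P}_{n,\infty}$ is a local property — two edges cross in the infinite model if and only if the corresponding finite arcs cross after restriction — Schiffler's theorem transfers directly to give the equivalence.

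The main obstacle will be the tagged edges incident to the puncture, namely $E^\pm_{(j,k)(j,k)}$, together with the indecomposables $P_{(1,-1)}, P_{(1,-1')}, P_{(1,-1)}[1], P_{(1,-1')}[1]$, and the two families $M_{[1,-1,\ell,m]}, M_{[1,-1',\ell,m]}$ that $\Phi$ sends to puncture edges. The crossing rule at the puncture (Definition~\ref{def:crossing number}(iv)) depends subtly on the signs $\pm 1$, and these signs encode the choice of branch at the fork of the type-$\mathbb{D}$ quiver. I would handle these cases one at a time, using the explicit $\tau$-table in Section~\ref{sec:D infinity} together with the case definition of $\Phi$, to verify that vanishing of $\Ext(M,N)$ between pairs of these special indecomposables matches non-crossing of the associated tagged puncture edges. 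Once these boundary cases are dispatched, the finite-embedding argument handles all remaining pairs uniformly.
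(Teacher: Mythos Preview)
Your approach is genuinely different from the paper's: the paper gives a direct case-by-case computation, splitting into twelve subcases according to whether $\Phi(M)$ or $\Phi(N)$ touches the puncture, and checking $\Ext$-vanishing against crossing explicitly in each. Your proposal instead reduces both sides to the finite $\mathbb{D}_k$ situation and invokes Schiffler's theorem. The $\Ext$ side of your reduction is essentially sound: full faithfulness of $\widetilde{F}$ together with the observation that only $i\in\{-1,0\}$ contribute to $\Hom_{\mathcal C}(M,N[1])=\bigoplus_i\Hom_{\mathcal D}(M,F^iN[1])$ in a hereditary setting lets you match the cluster-category $\Ext$ groups, provided you enlarge $k$ to also contain the projective resolution of $\tau N$ (the paper's choice of $k$ only accounts for $\tau^{-1}N$).

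The real gap is on the geometric side. You assert that the restriction of $\Phi$ to the image of $\widetilde{F}$ agrees with Schiffler's bijection after identifying the punctured $k$-gon with a window of $\mathcal P_{n,\infty}$, and that crossing is ``local''. Neither claim is automatic. First, the embedding $F$ does not send a $\mathbb{D}_k$-module to the $\mathbb{D}_{n,\infty}$-module with the same support: the simple projective at the source of $\mathbb{D}_k$ is sent to the infinite-dimensional projective $P_{(1,1)}$, and $\Phi$ assigns these very different arcs. Second, the paper's chosen vertex set for $\mathbb{D}_k$ always contains both $(1,-1)$ and the source $(1,1)$, so the corresponding polygon vertices are not a contiguous window but a selection spanning the full cyclic order; establishing that Schiffler's arc for each indecomposable lands on $\Phi(F(-))$ under this identification, and that cyclic interleaving (hence crossing) is preserved, requires its own bookkeeping. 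That bookkeeping is roughly as involved as the paper's direct check --- so your route trades one case analysis for another rather than eliminating it. If you pursue this line, the cleanest fix is to prove once that $\Phi\circ F$ and the polygon-embedding of Schiffler's bijection are both determined by commuting with $\tau$ and sending irreducible maps to elementary moves (Proposition~\ref{prop:phi}), and hence coincide; but you would need to argue that these properties pin down the bijection uniquely.
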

\begin{proof}
    We separate cases based on the types of the tagged edges $\Phi(M)$ and $\Phi(N)$.
    If neither $\Phi(M)$ nor $\Phi(N)$ go to the puncture, then the result follows from straightforward computations.
    This is similarly true if \emph{both} $\Phi(M)$ and $\Phi(N)$ go to the puncture.
    So, we prove the result when $\Phi(M)$ touches the puncture but $\Phi(N)$ does not.
    
    We classify $\Phi(M)$ into three cases and $\Phi(N)$ into four cases.
    \begin{align*}
        \text{(i)\ } & \Phi(M)=E_{(1,-1)(1,-1)}^\pm &
        \text{(a)\ } & \Phi(N)=E_{(1,-1)(j,k)} \\
        \text{(ii)\ } & \Phi(M)=E_{(1,0)(1,0)}^\pm &
        \text{(b)\ } & \Phi(N)=E_{(1,0)(j,k+1)} \\
        \text{(iii)\ } & \text{other} &
        \text{(c)\ } & \Phi(N) = E_{(l,m)(j,k+2)} \\
        & & \text{(d)\ } & \Phi(N) = E_{(j,k)(l,m)},
    \end{align*}
    where there is a path from $(l,m)$ to $(j,k)\notin\{(1,-1),(1,-1')\}$ in $\mathbb{D}_{n,\infty}$ for cases (c) and (d).
    \begin{itemize}
        \item[\textbf{(i)}]
    For case (i) we know $M=P_{(1,-1)}$ or $M=P_{(1,-1')}$.
    We will assume $M=P_{(1,-1)}$ as $M=P_{(1,-1')}$ is similar.
    
    \begin{itemize}
        \item[\textbf{(a)}]
    These tagged edges never cross.
    In this case $N=P_{(j,k)}$.
    In $\mathcal{C}(\mathbb{D}_{n,\infty})$, $P_{(1,-1)}$ and $P_{(j,k)}$ do not have any Ext groups.
    
   \item[\textbf{(b)}]
    These tagged edges do not cross since $(j,k)\neq (1,-1)$.
    In this case $N=P_{(j,k)}[1]$ where $(j,k)\notin\{(1,-1),(1,-1')\}$.
    In $\mathcal{C}(\mathbb{D}_{n,\infty})$, $P_{(1,-1)}$ and $P_{(j,k)}[1]$ do not have any Ext groups.
    
    \item[\textbf{(c)}]
    These tagged edges will only if and only if $(j,k)=(1,-2)$.
    In this case $N=M_{[j,k,l,m]}$.
    In $\mathcal{C}(\mathbb{D}_{n,\infty})$, $P_{(1,-1)}$ and $M_{[j,k,l,m]}$ have Ext groups if and only if $(j,k)=(1,-2)$.
    
    \item[\textbf{(d)}]
    These tagged edges always cross.
    Here $N=M_{(j,k)^2(l,m)}$.
    In $\mathcal{C}(\mathbb{D}_{n,\infty})$, the group $\Ext(M_{(j,k)^2(l,m)},P_{(1,-1)})\neq 0$.
    \end{itemize}
    
        \item[\textbf{(ii)}] Again we will assume $M=P_{(1,-1)}[1]$ as $M=P_{(1,-1')}[1]$ is similar.

   \begin{itemize}
       \item[\textbf{(a)}]
    These tagged edges always cross.
    In this case $N=P_{(j,k)}$.
    In $\mathcal{C}(\mathbb{D}_{n,\infty})$, the group $\Ext(P_{(1,-1)}[1],P_{(j,k)})\neq 0$.
    
    \item[\textbf{(b)}]
    These tagged edges do not cross.
    In this case $N=P_{(j,k)}[1]$ where $(j,k)\notin\{(1,-1),(1,-1')\}$.
    As with case (i)(a), in $\mathcal{C}(\mathbb{D}_{n,\infty})$ there are no Ext groups between $P_{(1,-1)}[1]$ and $P_{(j,k)}[1]$.
    
    \item[\textbf{(c)}]
    These tagged edges never cross.
    In this case $N=M_{[j,k,l,m]}$ where $(j,k)\notin\{(1,-1),(1,-1')\}$.
    In $\mathcal{C}(\mathbb{D}_{n,\infty})$ there are no Ext groups between $P_{(1,-1)}[1]$ and $M_{[j,k,l,m]}$.
    
    \item[\textbf{(d)}]
    These tagged edges always cross.
    In this case $N=M_{(j,k)^2(l,m)}$.
    Moreover, in $\mathcal{C}(\mathbb{D}_{n,\infty})$ the group $\Ext(M_{(j,k)^2(l,m)},P_{(1,-1)}[1])\neq 0$.
    \end{itemize} 
    
    \item[\textbf{(iii)}]
    We now assume $M=M_{[1,-1,j',k']}$ as $M=M_{[1,-1',j',k']}$ is similar, for some $(j',k')\notin\{(1,-1),(1,-1')\}$.
    
    \begin{itemize}
    \item[\textbf{(a)}]
    These tagged edges only cross if $(1,-1)< (j',k') <(j,k)$ in the cyclic order of the boundary of $\mathcal{P}_{n,\infty}$.
    In this case $N=P_{(j,k)}$.
    In $\mathcal{C}(\mathbb{D}_{n,\infty})$, we only have $\Ext(M,N)\neq 0$ if $(j',k')$ is to the left of $(j,k)$ in $\mathbb{D}_{i,\infty}$.
    \item[\textbf{(b)}]
    These tagged edges only cross if $(1,0)< (j',k') <(j,k+1)$ in the cyclic order of the boundary of $\mathcal{P}_{n,\infty}$.
    In this case $N=P_{(j,k)}[1]$ where $(j,k)\notin\{(1,-1),(1,-1')\}$.
    In $\mathcal{C}(\mathbb{D}_{n,\infty})$, we only have $\Ext(M,N)\neq 0$ or $\Ext(N,M)\neq 0$ if $(j',k')$ is equal to or to the left of $(j,k)$ in $\mathbb{D}_{i,\infty}$.
    \item[\textbf{(c)}]
    These tagged edges only cross if $(l,m) < (j',k') < (j,k+2)$ in the cyclic order of the boundary of $\mathcal{P}_{n,\infty}$.
    In this case $N=M_{[j,k,l,m]}$ where $(j,k)\notin\{(1,-1),(1,-1')\}$.
    Notice there is a path from $(l,m)$ to $(j',k')$ to $(j,k)$ in $\mathbb{D}_{n,\infty}$.
    In $\mathcal{C}(\mathbb{D}_{n,\infty})$, the group $\Ext(M_{[j,k,l,m]},M_{[1,-1,j',k']})\neq 0$ if and only if $(j',k')=(j,k+1)$ or $(j',k')$ is contained in the support of $M_{[j,k,l,m]}$ without $(l,m)$.
    The group $\Ext(M_{[1,-1,j',k']},M_{[j,k,l,m]})=0$.
    
    \item[\textbf{(d)}]
    These tagged edges only cross if $(j,k)<(j',k')<(l,m)$ in the cyclic order of the boundary of $\mathcal{P}_{n,\infty}$.
    There two possibilities for paths in $\mathbb{D}_{n,\infty}$.
    The first is $(l,m)$ to $(j,k)$ to $(j',k')$.
    The other is $(j',k')$ to $(l,m)$ to $(j,k)$.
    In this case $N=M_{(j,k)^2(l,m)}$.
    In $\mathcal{C}(\mathbb{D}_{n,\infty})$, the group $\Ext(M_{[1,-1,j',k']},M_{(j,k)^2(l,m)})\neq 0$ if and only if $(j',k')\neq (l,m)$ and there is a path $(j',k')$ to $(l,m)$ in $\mathbb{D}_{n,\infty}$.
    Moreover, we have that the group $\Ext(M_{(j,k)^2(l,m)},M_{[1,-1,j',k']})\neq 0$ if and only if $(j',k')\neq (j,k)$ and there is a path $(j,k)$ to $(j',k')$ in $\mathbb{D}_{n,\infty}$.
    \end{itemize}
    \end{itemize}
    
    \noindent \textbf{Conclusion.}
    We see that, in all cases, Ext-orthogonality is equivalent to the corresponding tagged edges not crossing, which completes the proof.
\end{proof}

We now state our first main result. 
\begin{theorem}\label{thm:main_result A body}
    There is a family of infinite type $\mathbb{D}$ cluster categories $\{\mathcal{C}(\mathbb{D}_{n,\infty}) \mid n\in\NN_{>0}\}$.
    Each cluster in each $\mathcal{C}(\mathbb{D}_{n,\infty})$ is weakly cluster tilting and each $\mathcal{C}(\mathbb{D}_{n,\infty})$ has a cluster theory.
    Furthermore, for each $\mathcal{C}(\mathbb{D}_{n,\infty})$, the combinatorial data is encoded in $\mathcal{P}_{n,\infty}$.
\end{theorem}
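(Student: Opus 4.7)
The plan is to assemble Theorem~\ref{thm:main_result A body} as a straightforward synthesis of the preceding work: the combinatorial model of $\mP_{n,\infty}$ (triangulations and mutations from Section~\ref{subsec:tagged_edges}), the bijection $\Phi$ and $\tau$-equivariance of Proposition~\ref{prop:phi}, and the Ext-versus-crossing dictionary of Proposition~\ref{prop:ext equals crossing}. First I would verify that $\mathcal{C}(\mathbb{D}_{n,\infty}) = \mathcal{D}^b(\mathbb{D}_{n,\infty})/\tau^{-1}[1]$ is a well-defined, triangulated, Krull--Remak--Schmidt category. The existence and triangulated structure follow because $\mathbb{k}\mathbb{D}_{n,\infty}$ is a semi-hereditary dualizing $\mathbb{k}$-variety (cited from \cite{berg2014threaded}), and the argument above (embedding $\mathcal{D}^b(\mathbb{D}_k) \hookrightarrow \mathcal{D}^b(\mathbb{D}_{n,\infty})$ for large enough $k$) identifies $\tau^{-1}[1]$ as the Serre functor, so $\mathcal{D}^b(\mathbb{D}_{n,\infty})$ is $2$-Calabi--Yau and the orbit quotient is triangulated.

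Next I would set the pairwise compatibility condition $\mathbf{P}$ on $\ind(\mathcal{C}(\mathbb{D}_{n,\infty}))$ to be Ext$^1$-orthogonality, and use $\Phi$ to transport this to the set of tagged edges. By Proposition~\ref{prop:ext equals crossing}, $\mathbf{P}$-compatibility of a pair $M,N$ corresponds exactly to non-crossing of $\Phi(M),\Phi(N)$. Consequently, maximal $\mathbf{P}$-compatible subsets of $\ind(\mathcal{C}(\mathbb{D}_{n,\infty}))$ are in natural bijection with triangulations of $\mP_{n,\infty}$ (Definition~\ref{def:triangulation}), and each such subset is weakly cluster tilting in the sense of Definition~\ref{def:weak cluster tilting}.

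To obtain a cluster theory (Definition~\ref{def:cluster theory}), I would verify that for every $\mathbf{P}$-cluster $T$ and every $X \in T$ there is at most one $Y \neq X$ with $(T\setminus\{X\}) \cup \{Y\}$ a $\mathbf{P}$-cluster. Translating under $\Phi$, this amounts to the statement that a tagged edge in a triangulation of $\mP_{n,\infty}$ can be flipped in at most one way (Definition~\ref{def:mutation on punctured infinity-gon}). I would argue this combinatorially by distinguishing the standard case (the edge borders two ideal triangles, giving a unique diagonal flip) from the degenerate cases that arise in the infinite setting: an edge to the puncture carries a tag which is swapped (yielding the unique partner), and an edge adjacent to an accumulation point may admit no flip at all. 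These local analyses are finite in nature since crossings depend only on the bounding region of the edge. In all cases the outcome is zero or one replacement, yielding the required mutation. Combined with $\Phi$, this simultaneously gives the cluster theory and the assertion that the combinatorial data of clusters and mutations is encoded in $\mP_{n,\infty}$.

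The main obstacle I anticipate is the careful bookkeeping of the degenerate mutation cases near the puncture and near the accumulation points, exactly where the infinite geometry departs from Schiffler's finite model \cite{schiffler2008geometric}. In particular one must handle edges of the form $E_{(h,a)(h,a)}^{\pm 1}$ (which flip by tag change rather than by a diagonal rotation) and edges that accumulate at an $a_i$ (for which no flip exists, corresponding to the fact that the corresponding indecomposable has no mutation partner). These are the cases preventing a true cluster structure in the sense of \cite{buan2009cluster}, which is why the theorem claims only a cluster theory; I expect the verification that ``at most one replacement exists'' is uniform across these cases and can be read off directly from the quiver of elementary moves depicted in Figures~\ref{fig:translation quiver for infty-gon} and \ref{fig:translation quiver for (3,infty)-gon}.
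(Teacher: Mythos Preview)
Your proposal is correct and matches the paper's approach: the paper presents Theorem~\ref{thm:main_result A body} as a direct synthesis of the construction of $\mathcal{C}(\mathbb{D}_{n,\infty})$ (via the Serre functor argument and \cite{berg2014threaded}) together with Propositions~\ref{prop:phi} and \ref{prop:ext equals crossing}, without a separate proof block. One small correction: in $\mP_{n,\infty}$ the accumulation points are \emph{not} marked, so no tagged edge touches them and every edge in every triangulation admits a unique flip; the ``no flip'' degenerate case you flag only arises in $\mP_{\overline{n,\infty}}$ (Theorem~\ref{thm:main_result B body}), so that part of your case analysis is unnecessary here, though harmless.
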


As we have not included the computations to show that mutation is given by left- and right-approximations, we present the following statement as a conjecture.

\begin{conjecture}\label{conj:main_result A}
    The weakly cluster tilting subcategories of each cluster category $\mathcal{C}(\mathbb{D}_{n,\infty})$ form a weak cluster structure.
\end{conjecture}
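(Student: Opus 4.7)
The plan is to verify the three assertions by porting everything through the bijection $\Phi$ of Proposition~\ref{prop:phi} and the crossing/$\Ext^1$-correspondence of Proposition~\ref{prop:ext equals crossing}. Since $\mathcal{C}(\mathbb{D}_{n,\infty}) = \mathcal{D}^b(\mathbb{D}_{n,\infty})/\tau^{-1}[1]$ has already been constructed as a 2-Calabi--Yau triangulated Krull--Remak--Schmidt category in \S\ref{sec:D infinity}, I would take as the pairwise compatibility condition $\mathbf{P}$ on $\ind(\mathcal{C}(\mathbb{D}_{n,\infty}))$ the relation of $\Ext^1$-orthogonality in both directions. By Proposition~\ref{prop:ext equals crossing}, $\Phi$ induces a bijection between $\mathbf{P}$-compatible sets of indecomposables and sets of pairwise non-crossing tagged edges in $\mathcal{P}_{n,\infty}$, and hence between $\mathbf{P}$-clusters and triangulations of $\mathcal{P}_{n,\infty}$.

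For the weakly cluster tilting claim, I would observe that a $\mathbf{P}$-cluster $T$ is by definition a set of indecomposables satisfying $\Ext^1(X,Y) = 0 = \Ext^1(Y,X)$ for all $X,Y \in T$, which is exactly Definition~\ref{def:weak cluster tilting} (note that because the 2-Calabi--Yau property gives $\Ext^1(X,Y) \cong D\Ext^1(Y,X)$, the two-sided vanishing is automatic from one-sided vanishing, matching the symmetry in Proposition~\ref{prop:ext equals crossing}). Since every maximally $\mathbf{P}$-compatible subset is closed under isomorphism and extended to its additive hull remains Ext-orthogonal, this yields weakly cluster tilting subcategories. The bijection with triangulations then shows that every cluster corresponds to a triangulation of $\mathcal{P}_{n,\infty}$, and vice versa.

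For the existence of a cluster theory, I have to verify the uniqueness-of-replacement condition in Definition~\ref{def:cluster theory}: given a triangulation $T$ of $\mathcal{P}_{n,\infty}$ and an arc $E \in T$, there is at most one tagged edge $E^* \neq E$ such that $(T \setminus \{E\}) \cup \{E^*\}$ is again a triangulation. I would argue combinatorially by splitting into cases on $E$. When $E$ is a boundary-to-boundary arc bounded in $T$ by two other arcs forming an honest quadrilateral, the opposite diagonal of that quadrilateral is the unique replacement, exactly as in the finite case. When $E$ is a tagged arc to the puncture $E^{\pm}_{(h,a)(h,a)}$ bounded by the standard configuration at $(h,a)$, either flipping the tag or replacing by the unique crossing diagonal to the next puncture-adjacent marked point yields the unique alternative; in both sub-cases uniqueness is forced by the local non-crossing constraints (conditions (iii) and (iv) of Definition~\ref{def:elementary move} translated into triangulations). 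When $E$ is a ``limiting'' arc bounded on at least one side by an infinite fan converging to an accumulation point without a matching fan on the other side (the same obstruction phenomenon as in Figure~\ref{fig:weak cluster holm-jorgensen}), no replacement exists, and ``at most one'' still holds vacuously. The hard step here is the bookkeeping for the puncture-adjacent and fan-bounded cases; I expect the cleanest formulation is to show that any candidate $E^*$ must cross $E$ but no other member of $T$, and then enumerate when a single such $E^*$ exists.

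Finally, combining Proposition~\ref{prop:phi}, which shows that $\Phi$ is a $\tau$-equivariant bijection taking elementary moves to irreducible maps, with the previous two paragraphs, the full combinatorial data of clusters, mutations, and AR-translation in $\mathcal{C}(\mathbb{D}_{n,\infty})$ is encoded in the stable translation quiver on $\mathcal{E}'_{n,\infty}$ (cf.~Figures~\ref{fig:translation quiver for infty-gon} and \ref{fig:translation quiver for (3,infty)-gon}), proving the last assertion. The main obstacle is therefore the case analysis establishing the uniqueness of the mutation replacement $E^*$, particularly near accumulation points where the usual quadrilateral argument must be replaced by a limit/fountain analysis.
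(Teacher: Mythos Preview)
You are proving the wrong statement. Your proposal addresses Theorem~\ref{thm:main_result A body} (existence of the family, weakly cluster tilting clusters, cluster theory, combinatorial encoding), not Conjecture~\ref{conj:main_result A}. The Conjecture asserts that the weakly cluster tilting subcategories form a \emph{weak cluster structure} in the sense of Definition~\ref{def:weak cluster structure}, which has two requirements: (i) a unique mutation replacement $M^*$ exists for every indecomposable $M$ in every cluster $T$, and (ii) there are exchange triangles $M^* \stackrel{f}{\to} B \stackrel{g}{\to} M \to M^*[1]$ and $M \stackrel{s}{\to} B' \stackrel{t}{\to} M^* \to M[1]$ in which $g,t$ are minimal right $\add(T\setminus\{M\})$-approximations and $f,s$ are minimal left $\add(T\setminus\{M\})$-approximations. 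Your proposal never touches (ii). This omission is not incidental: the paper itself offers no proof of the Conjecture, and the sentence immediately preceding it says ``As we have not included the computations to show that mutation is given by left- and right-approximations, we present the following statement as a conjecture.'' So there is nothing in the paper to compare your argument to, and the essential content of the Conjecture --- the approximation property --- is absent from your sketch.

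A secondary issue is that even your treatment of (i) is calibrated to the wrong definition. You argue for ``at most one'' replacement, which is the cluster-theory condition of Definition~\ref{def:cluster theory}, and you explicitly allow arcs with \emph{no} replacement (your ``limiting arc bounded on at least one side by an infinite fan'' case, handled ``vacuously''). But a weak cluster structure requires \emph{exactly one} replacement for every indecomposable in every cluster; a case with no replacement would falsify the Conjecture, not satisfy it. Moreover, the limiting-arc phenomenon you invoke belongs to $\mathcal{P}_{\overline{n,\infty}}$ (where accumulation points are marked), not to $\mathcal{P}_{n,\infty}$; in the latter model there are no arcs ending at accumulation points, so your case analysis is importing an obstruction from the wrong setting.
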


\subsection{Infinite type $\mathbb{D}$ weak cluster categories  associated to $\mathbb{D}_{\overline{n,\infty}}$}
\begin{definition}\label{def:D infinity closed}
Consider $\mathbb{D}_{n,\infty}$ as in Definition~\ref{def:D infinity}.
The quiver $\mathbb{D}_{\overline{n,\infty}}$ is obtained by adding a point between every copy of $\ZZ$ or $\NN$ in the thread of $\mathbb{D}_{n,\infty}$.
For vertices that are not accumulation points, we index the vertices of $\mathbb{D}_{\overline{n,\infty}}$ the same way as for $\mathbb{D}_{n,\infty}$.
The $(j,\infty)$ vertices are indexed in increasing order from right to left:
\begin{displaymath}
    \xymatrix@C=1.8ex@R=3ex{
    \bullet \\
    \bullet \ar[u] \ar[d] &
    \bullet \ar[l] &
    \cdots (i,\infty) \cdots \ar[l] &
    \bullet \ar[l] &  \bullet \ar[l] &
    \cdots \ar[l] \cdots  (2,\infty) \cdots &
    \bullet \ar[l] &  \bullet \ar[l] &
    \cdots (1,\infty) \cdots \ar[l] &
    \bullet \ar[l] & \bullet \ar[l] \\
    \bullet
    }
\end{displaymath}
\end{definition}

There are representable projectives $P_{(j,\infty)}$ at each $(j,\infty)$.
In $\Rep^{\text{pwf}}(\mathbb{D}_{\overline{n,\infty}})$, the radical of each $P_{(j,\infty)}$ is projective but not representable.
Thus, there are no indecomposable bar representations in $\rep^{\text{fp}}(\mathbb{D}_{\overline{n,\infty}})$ from $(j,\infty)$ to some $(\ell,m)$, for any $1\leq j \leq i$.
Instead, there are indecomposable bar representations whose support is the half open interval with infimum $(j,\infty)$ and maximum $(\ell,m)$ for some $(\ell,m)$.
Note that the infimum $(j,\infty)$ is \emph{not} in the support of these modules.
We denote such a module by $M_{(j,\infty,\ell,m]}$ and its support is shown below
\begin{displaymath}
        M_{(j,\infty,\ell,m]} =
        \xymatrix@C=2ex@R=3ex{
        0 \\
        0 \ar[u] \ar[d] &
        0 \ar[l] &
        \cdots 0 \cdots  \ar[l] &
        \Bbbk \ar[l]&
        \cdots \ar[l] &
        \Bbbk  \ar[l] &
        0 \ar[l] &
        \cdots \ar[l] &
        0 \ar[l] \\
        0.
        }
\end{displaymath}
The furthest $\Bbbk$ to the right is permitted to be at $(\ell,\infty)$ for some $1\leq \ell<j$.
Moreover, we notice that, in $\rep^{\text{fp}}(\mathbb{D}_{\overline{n,\infty}})$, there is no simple module at $(j,\infty)$ for any $1\leq j \leq i$.

The rest of the indecomposables described for $\rep^\text{fp}(\mathbb{D}_{n,\infty})$ are defined analogously using the accumulation vertices of $\mathbb{D}_{\overline{n,\infty}}$ in precisely the same way.

In order to obtain $\mathcal{C}(\mathbb{D}_{\overline{n,\infty}})$, we take inspiration from \cite{paquette2021completions}.
First construct $\mathcal{C}(\mathbb{D}_{2i,\infty})$.
Then, formally invert any morphism in $\mathcal{C}(\mathbb{D}_{2i,\infty})$ whose cone comes from objects in $\rep^{\text{fp}}(\mathbb{D}_{2i,\infty})$ with support entirely contained in $\{(2j,k)\mid k\in\ZZ\}$, for some $1\leq j\leq i$.
Extend to the rest of $\mathcal{C}(\mathbb{D}_{2i,\infty})$ bilinearly.
Denote this localised category by $\mathcal{C}(\mathbb{D}_{\overline{n,\infty}})$ and denote the quotient functor by $\pi$.

We now define a partial translation $\tau$ on $\mathcal{C}(\mathbb{D}_{\overline{n,\infty}})$.
There is an inclusion $\iota:\mathcal{C}(\mathbb{D}_{\overline{n,\infty}})\hookrightarrow \mathcal{C}(\mathbb{D}_{2i,\infty})$.
On indecomposables that come from projectives in $\rep^{\text{fp}}(\mathbb{D}_{\overline{n,\infty}})$:
\begin{align*}
    P_{(j,k)}[n] &\stackrel{\iota}{\mapsto} P_{(2(j-1)+1,k)}[n] & P_{(j,\infty)}[n] &\stackrel{\iota}{\mapsto} P_{(2j,0)}[n],
\end{align*}
where $1\leq j \leq i$, $k\in\ZZ$, and $n\in\{0,1\}$.
The rest of the inclusion is defined using the additive and triangulated structure of $\mathcal{C}(\mathbb{D}_{\overline{n,\infty}})$, since each indecomposable is the cone of a morphism of (shifts of) projectives.

Straightforward computations show that $\pi\iota$ is a triangulated equivalence on $\mathcal{C}(\mathbb{D}_{\overline{n,\infty}})$ and, in particular, $\pi\iota(M)\cong M$ for any object $M$ in $\mathcal{C}(\mathbb{D}_{\overline{n,\infty}})$.
Thus, we define \[\tau_{\mathcal{C}(\mathbb{D}_{\overline{n,\infty}})}:= \pi \tau_{\mathcal{C}(\mathbb{D}_{2i,\infty})} \iota.\]

Notice that for any $M_*$ we have defined, that does not use some $(j,\infty)$ in the $*$, the value of $\tau M_*$ is precisely as we previously saw.
For $M_*$ with a $(j,\infty)$, $(\ell,\infty)$, or both in the $*$ we have:
\begin{align*}
    \tau P_{(j,\infty)} &= P_{(j,\infty)}[1] &
    \tau P_{(j,\infty)}[1] &= M_{(j,\infty,1,1]} \\
    \tau M_{(j,\infty,\ell,m]} &= M_{(j,\infty,\ell,m+1]} &
    \tau M_{[j,k,\ell,\infty]} &= M_{[j,k+1,\ell,\infty]} & \text{if }(j,k)\notin\{(1,-1),(1,-1')\} \\
    \tau M_{(j,\infty)^2(\ell,m)} &= M_{(j,\infty)^2(\ell,m+1)} &
    \tau M_{(j,k)^2(\ell,\infty)} &= M_{(j,k+1)^2(\ell,\infty)} \\
    \tau M_{(j,\infty,\ell,\infty]} &= M_{(j,\infty,\ell,\infty]} &
    \tau M_{(j,\infty)^2(\ell,\infty)} &= M_{(j,\infty)^2(\ell,\infty)} \\
    \tau M_{[1,-1,\ell,\infty]} &= M_{[1,-1',\ell,\infty]} &
    \tau M_{[1,-1',\ell,\infty]} &= M_{[1,-1,\ell,\infty]}.
\end{align*}
Note that the indecomposable module $M_{(j,\infty,1,1]}$ is injective in $\rep^{\text{fp}}(\mathbb{D}_{\overline{n,\infty}})$ and so we retain the property of sending shifted projectives to injectives via $\tau$.

We now extend the bijection $\Phi$ from $\ind(\mathcal{C}(\mathbb{D}_{n,\infty}))$ to the tagged edges of $\mathcal{P}_{n,\infty}$.
The only changes in the definition of $\Phi$ (from the paragraph before Proposition~\ref{prop:phi}) are
\begin{align*}
    M_{(j,\infty,\ell,m]} &\mapsto E_{(\ell,m)(j,\infty)} & 
    P_{(j,\infty)}[1] &\mapsto E_{(1,0),(j,\infty)},
\end{align*} since there is no way to incorporate ``$+1$'' or ``$+2$'' with $\infty$.
In all other cases, treat $(j,\infty)$ or $(\ell,\infty)$ like any other vertex.
We note that the ``missing'' tagged edges for $\mathcal{P}_{\overline{n,\infty}}$ are precisely where each $(j,\infty)$ simple would be sent, for $1\leq j \leq i$.
See, for example, the empty spaces in the bottom row of Figures~\ref{fig:translation quiver for completed infty-gon} and \ref{fig:translation quiver for completed (3,infty)-gon} on pages \pageref{fig:translation quiver for completed infty-gon} and \pageref{fig:translation quiver for completed (3,infty)-gon}, respectively.

The following propositions also follow from straightforward bookkeeping.

\begin{proposition}
    The function $\Phi$ defined above is a bijection from $\ind(\mathcal{C}(\mathbb{D}_{\overline{n,\infty}}))$ to tagged edges in $\mathcal{P}_{\overline{n,\infty}}$.
    Furthermore, $\tau\Phi = \Phi\tau$ and elementary moves correspond to irreducible maps.
\end{proposition}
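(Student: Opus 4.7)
The plan is to reduce the statement to a careful case analysis that extends the proof of Proposition~\ref{prop:phi}, leveraging the inclusion $\iota:\mathcal{C}(\mathbb{D}_{\overline{n,\infty}})\hookrightarrow \mathcal{C}(\mathbb{D}_{2i,\infty})$ and the fact that most of the bookkeeping has already been handled in the uncompleted setting. First I would show that $\Phi$ is well-defined by verifying that the list of indecomposables of $\rep_\Bbbk^{\fp}(\mathbb{D}_{\overline{n,\infty}})$ given before the statement is exhaustive. This requires noting that the only new indecomposables (beyond those in the $\mathbb{D}_{n,\infty}$ case) are $P_{(j,\infty)}$, $M_{(j,\infty,\ell,m]}$, $M_{[j,k,\ell,\infty]}$, $M_{(j,\infty)^2(\ell,m)}$, $M_{(j,k)^2(\ell,\infty)}$, $M_{(j,\infty,\ell,\infty]}$, and $M_{(j,\infty)^2(\ell,\infty)}$, together with their shifts in $\mathcal{C}(\mathbb{D}_{\overline{n,\infty}})$. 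The absence of simple modules at the accumulation vertices $(j,\infty)$ in $\rep_\Bbbk^{\fp}$ is the combinatorial analogue of the missing vertices in the bottom row of Figures~\ref{fig:translation quiver for completed infty-gon} and \ref{fig:translation quiver for completed (3,infty)-gon}.

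Next I would establish bijectivity. Injectivity follows by inspection of the case definition of $\Phi$, since each pair of boundary marked points in $\mathcal{P}_{\overline{n,\infty}}$ appears as the label of exactly one indecomposable. For surjectivity, I would enumerate the tagged edges of $\mathcal{E}'_{\overline{n,\infty}}$ according to whether endpoints are of the form $(1,-1)$, $(1,0)$, accumulation points $(j,\infty)$, or ordinary boundary points, and match each class to an explicit indecomposable via the formulas. The slightly subtle points are the tagged edges with endpoint at $(j,\infty)$: these correspond precisely to the half-open bar modules $M_{(j,\infty,\ell,m]}$ and to the projectives $P_{(j,\infty)}$ and their shifts, which is where the localization at morphisms supported on $\{(2j,k)\mid k\in\ZZ\}$ becomes essential.

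I would then verify $\tau\Phi = \Phi\tau$ by comparing the list of formulas for $\tau$ on the categorical side (displayed just before the bijection) with the formula for $\tau$ on tagged edges in Definition~\ref{def:translation}, case by case. Most of these reduce directly to the computation in Proposition~\ref{prop:phi} because $\pi\iota$ is a triangulated equivalence fixing objects; the genuinely new calculations are:
\begin{itemize}
\item $\tau M_{(j,\infty,\ell,m]} = M_{(j,\infty,\ell,m+1]}$ corresponds to $\tau E_{(\ell,m)(j,\infty)} = E_{(\ell,m+1)(j,\infty)}$, where the $(j,\infty)$ coordinate is fixed under $\tau$ by the convention $\infty+1=\infty$;
\item the loop formulas $\tau M_{(j,\infty,\ell,\infty]}=M_{(j,\infty,\ell,\infty]}$ and $\tau M_{(j,\infty)^2(\ell,\infty)}=M_{(j,\infty)^2(\ell,\infty)}$, which match the fact that tagged edges with both endpoints at accumulation points are fixed by $\tau$;
\item the swap $\tau P_{(j,\infty)}[1]=M_{(j,\infty,1,1]}$, which plays the role of the ``projective-shifted-to-injective'' step at the accumulation vertex.
\end{itemize}

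Finally, for elementary moves corresponding to irreducible maps, I would argue that $\iota$ sends irreducible maps in $\mathcal{C}(\mathbb{D}_{\overline{n,\infty}})$ to compositions of irreducible maps in $\mathcal{C}(\mathbb{D}_{2i,\infty})$ whose images under $\pi$ are again irreducible, and dually that every elementary move in the completed stable translation quiver lifts. The ordinary (non-accumulation) cases are inherited from Proposition~\ref{prop:phi}, and the new cases are exactly those of Definition~\ref{def:elementary move} involving an accumulation point as an endpoint; these match the irreducible maps coming from the mesh relations at the $(j,\infty)$ vertices in the AR-quiver of $\mathcal{C}(\mathbb{D}_{\overline{n,\infty}})$. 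The main obstacle I expect is bookkeeping the behaviour of $\tau$ and of irreducible maps at the accumulation points after localization: one must check that formally inverting morphisms whose cones are supported on the even-indexed columns really does identify the categorical $\tau$ with the combinatorial translation $\tau$ that fixes the $E_{(j,\infty)(\ell,\infty)}$ and swaps only the signs of the self-loops $E^\pm_{(j,\infty)(j,\infty)}$, without creating or destroying any irreducible maps between neighbouring tagged edges.
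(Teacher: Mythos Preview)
Your proposal is correct and is essentially the approach the paper takes: the paper's own proof is nothing more than the remark that the result ``also follow[s] from straightforward bookkeeping,'' in direct analogy with Proposition~\ref{prop:phi}. Your outline is simply a careful unpacking of that bookkeeping, and the specific new cases you isolate at the accumulation vertices (the half-open bars $M_{(j,\infty,\ell,m]}$, the fixed points $M_{(j,\infty,\ell,\infty]}$ and $M_{(j,\infty)^2(\ell,\infty)}$, and the sign-swap on $E^{\pm}_{(j,\infty)(j,\infty)}$) are exactly the ones the paper records in its displayed $\tau$-formulas and in the captions to Figures~\ref{fig:translation quiver for completed infty-gon} and \ref{fig:translation quiver for completed (3,infty)-gon}.
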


\begin{proposition}
    Let $M,N$ be in $\ind(\mathcal{C}(\mathbb{D}_{\overline{n,\infty}}))$.
    \begin{enumerate}
        \item[{\rm 1.}] Assume that either (i) one of $\Phi(M)$ or $\Phi(N)$ does not touch an accumulation point or (ii) $\Phi(M)$ and $\Phi(N)$ do not touch the same accumulation point.
        Then $\Phi(M)$ crosses $\Phi(N)$ if and only if
        \[\dim_{\Bbbk}(\Ext(M,N)\oplus\Ext(N,M)) > 0. \]
        \item[{\rm 2.}] If $\Phi(M)$ and $\Phi(N)$ touch the same accumulation point, then $\Phi(M)$ crosses $\Phi(N)$ if and only if
        \[\dim_{\Bbbk}(\Ext(M,N)\oplus\Ext(N,M)) > 1. \]
    \end{enumerate}
\end{proposition}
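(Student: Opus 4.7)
The plan is to proceed by case analysis analogous to the proof of Proposition~\ref{prop:ext equals crossing}, leveraging the inclusion $\iota: \mathcal{C}(\mathbb{D}_{\overline{n,\infty}}) \hookrightarrow \mathcal{C}(\mathbb{D}_{2i,\infty})$ and the quotient $\pi$ from the localization construction. First I would classify the indecomposables of $\mathcal{C}(\mathbb{D}_{\overline{n,\infty}})$ according to whether $\Phi(M)$ touches an accumulation point on the boundary. If neither $\Phi(M)$ nor $\Phi(N)$ touches an accumulation point, then $M$ and $N$ are the ``old'' types $P_{(j,k)}$, $M_{[j,k,\ell,m]}$, $M_{(j,k)^2(\ell,m)}$, and their shifts; these have supports avoiding the accumulation vertices, and the desired equivalence reduces directly to Proposition~\ref{prop:ext equals crossing} after identifying the corresponding tagged edges in the non-completed disk.

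For Part 1, it remains to treat pairs where exactly one of $\Phi(M), \Phi(N)$ touches an accumulation point, or where the two arcs touch distinct accumulation points. In either situation one may compute $\Ext^1$ in $\mathcal{C}(\mathbb{D}_{\overline{n,\infty}})$ from projective resolutions built using the new representable projectives $P_{(j,\infty)}$. The case analysis parallels the four-by-three table of Proposition~\ref{prop:ext equals crossing}, now enlarged by the indecomposables $P_{(j,\infty)}$, $M_{(j,\infty,\ell,m]}$, $M_{[j,k,\ell,\infty]}$, $M_{(j,\infty)^2(\ell,m)}$, $M_{(j,k)^2(\ell,\infty)}$, $M_{(j,\infty,\ell,\infty]}$, $M_{(j,\infty)^2(\ell,\infty)}$, together with their shifts and injective analogues. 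For each subcase one checks that transverse interior intersection of the corresponding arcs matches non-vanishing of at least one of the Ext groups; since distinct accumulation points $(j,\infty)$ and $(\ell,\infty)$ lie in disjoint ``clouds'' of marked points, the support-based computations decouple and are essentially bookkeeping.

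The main obstacle is Part 2, where $\Phi(M)$ and $\Phi(N)$ share a common accumulation-point endpoint $(j,\infty)$. Geometrically such arcs always meet on the boundary but need not intersect in the interior, while algebraically both modules have supports whose closures contain $(j,\infty)$, producing a canonical one-dimensional ``degenerate'' contribution to $\Ext^1$ arising from the localization-identified morphisms, independent of any interior crossing. My strategy would be to exhibit a specific non-split short exact sequence, supported at and near $(j,\infty)$, that realises this degenerate extension, and then to show that every other non-split extension of $N$ by $M$ (or of $M$ by $N$) is detectable by a transverse interior intersection. Combined with the observation that each interior crossing contributes at least one further independent extension class, the strict inequality $\dim > 1$ will precisely distinguish pairs with an interior crossing from those sharing only the boundary accumulation endpoint.

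I expect the hardest step to be verifying that the ``accumulation-point extension'' is always exactly one-dimensional and that it is never equivalent, modulo coboundaries in $\mathcal{C}(\mathbb{D}_{\overline{n,\infty}})$, to an extension detected by an interior crossing. A clean way to organise this would be to classify extensions by the support of their middle term: the degenerate extension has middle term whose support is a half-open interval with infimum $(j,\infty)$, whereas crossing-detected extensions have middle terms supported on intervals meeting the interiors of both arcs. With this separation in hand, the inequality of Part 2 follows by adding the two contributions.
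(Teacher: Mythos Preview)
The paper does not actually give a proof of this proposition: it is one of two results introduced with the sentence ``The following propositions also follow from straightforward bookkeeping,'' and no further argument is supplied. So there is no paper proof to compare against in any detail; the authors are content to assert that the same sort of case analysis carried out for Proposition~\ref{prop:ext equals crossing} goes through here.

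Your proposal is a reasonable elaboration of what that bookkeeping would look like, and for Part~1 your reduction to the non-completed case plus an enlarged case table is exactly the expected route. For Part~2 your conceptual explanation (a forced one-dimensional ``degenerate'' extension coming from the shared accumulation endpoint, with interior crossings contributing additional independent classes) is the right intuition and matches the phenomenon in \cite{paquette2021completions} that the construction is modelled on. Two small points of caution. First, $\mathcal{C}(\mathbb{D}_{\overline{n,\infty}})$ is a triangulated localisation, not an abelian category, so ``$\Ext^1$'' here means $\Hom(-,-[1])$; your reference to ``projective resolutions built using the new representable projectives $P_{(j,\infty)}$'' is really a computation in $\rep_\Bbbk^{\text{fp}}(\mathbb{D}_{\overline{n,\infty}})$ that then has to be transported through the orbit and localisation constructions, and the extra dimension in Part~2 genuinely comes from the calculus of fractions in the Verdier quotient rather than from a short exact sequence in the module category. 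Second, your proposed separation of extension classes by ``support of the middle term'' is heuristically right but would need care, since after localisation the middle term of a triangle is only determined up to the subcategory you have inverted; it is cleaner to compute $\Hom_{\mathcal{C}(\mathbb{D}_{\overline{n,\infty}})}(M,N[1])$ directly via $\iota$ and $\pi$, tracking which morphisms in $\mathcal{C}(\mathbb{D}_{2i,\infty})$ become invertible.
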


Notice that limiting edges in a triangulation may not be mutated.
These only occur when the accumulation points on the boundary are also marked, but not \emph{always}.

\medskip

We may now state our second main result.

\begin{theorem}\label{thm:main_result B body}
    There is a family of infinite type $\mathbb{D}$ weak cluster categories $\{\mathcal{C}(\mathbb{D}_{\overline{n,\infty}}) \mid n\in\NN_{>0}\}$.
    Each $\mathcal{C}(\mathbb{D}_{\overline{n,\infty}})$ has a cluster theory.
    Furthermore, for each $\mathcal{C}(\mathbb{D}_{\overline{n,\infty}})$, the combinatorial data is encoded in $\mathcal{P}_{\overline{n,\infty}}$.
\end{theorem}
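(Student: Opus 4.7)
The plan is to verify the three claims of the theorem by building on the construction already provided and the analogous work done for $\mathcal{C}(\mathbb{D}_{n,\infty})$ in Theorem~\ref{thm:main_result A body}. First, I would check that $\mathcal{C}(\mathbb{D}_{\overline{n,\infty}})$ is a Krull--Remak--Schmidt triangulated category. Since it is obtained by localising $\mathcal{C}(\mathbb{D}_{2i,\infty})$ at a class of morphisms whose cones are supported entirely on an even-indexed copy of $\ZZ$, the triangulated structure is inherited from this localisation. For the Krull--Remak--Schmidt property, I would argue that $\pi\iota$ acts as an equivalence on every object, so the indecomposables of $\mathcal{C}(\mathbb{D}_{\overline{n,\infty}})$ are precisely the images under $\pi$ of the indecomposables in the essential image of $\iota$, which are in bijection (via $\Phi$) with the tagged edges of $\mathcal{P}_{\overline{n,\infty}}$.

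Next, I would define the pairwise compatibility condition $\mathbf P$ on $\ind(\mathcal{C}(\mathbb{D}_{\overline{n,\infty}}))$ by declaring $M$ and $N$ compatible if and only if $\Phi(M)$ and $\Phi(N)$ do not cross in $\mathcal{P}_{\overline{n,\infty}}$. By the crossing-Ext proposition immediately preceding the theorem statement, this is equivalent to requiring $\Ext$-orthogonality except on the special pairs of arcs touching the same accumulation point, where one unit of $\Ext$ is allowed (the typical ``limiting'' configuration). With this definition, the $\mathbf P$-clusters are precisely the images under $\Phi^{-1}$ of triangulations of $\mathcal{P}_{\overline{n,\infty}}$, and I would check this maximality by showing that any tagged edge added to a triangulation introduces a crossing, hence a nonzero Ext (beyond the allowed slack), via a case analysis on whether the new edge touches the puncture, an accumulation point, or a regular marked point.

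Third, I would verify the ``zero or one replacement'' condition in Definition~\ref{def:cluster theory} by translating mutations of triangulations in $\mathcal{P}_{\overline{n,\infty}}$ through $\Phi$. For a non-limiting edge in a triangulation, the usual local flip applies and yields a unique replacement, following the mutation rules set up in Definition~\ref{def:mutation on punctured infinity-gon}. For a limiting edge (an edge incident with an accumulation point in a cluster where the neighbouring fan continues to that accumulation point from both sides), I would show that no replacement is possible: any candidate $E^*$ must still be compatible with every other edge of the cluster, but the combinatorial flanking by infinitely many arcs forces $E^*$ to coincide with $E$. This yields the ``zero'' case and shows that such tagged edges are genuinely not mutable, justifying the move from a cluster structure to a cluster theory, and gives the cluster theory $\mathscr{T}_{\mathbf P}(\mathcal{C}(\mathbb{D}_{\overline{n,\infty}}))$.

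The main obstacle will be the verification of the ``zero or one replacement'' property for edges adjacent to accumulation points. The combinatorics of $\mathcal{P}_{\overline{n,\infty}}$ admits clusters (such as those generalising Figure~\ref{fig:weak cluster holm-jorgensen}) where the local picture around an accumulation point is asymmetric, and one must rule out the existence of any non-isomorphic second completion. I would handle this by carefully enumerating, for each possible form of the tagged edge $E$ to be replaced (one of the four types in Definition~\ref{def:elementary move}, now extended to include arcs ending at $(j,\infty)$), the minimal ``polygon'' or region cut out by the neighbouring edges in the cluster, and then reducing to a finite combinatorial computation analogous to the $\mathbb{A}_n$ and $\mathbb{D}_n$ finite cases in \cite{schiffler2008geometric,caldero2006quivers}. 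Once this step is settled, the third claim about encoding the combinatorial data in $\mathcal{P}_{\overline{n,\infty}}$ follows formally from the bijection $\Phi$ together with its compatibility with $\tau$ and with elementary moves.
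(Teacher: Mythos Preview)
Your proposal is essentially correct and follows the same approach as the paper: assemble the theorem from the construction of $\mathcal{C}(\mathbb{D}_{\overline{n,\infty}})$ via localisation of $\mathcal{C}(\mathbb{D}_{2i,\infty})$, the bijection $\Phi$ to tagged edges of $\mathcal{P}_{\overline{n,\infty}}$, and the crossing/Ext proposition, with the compatibility condition $\mathbf{P}$ defined through non-crossing rather than pure Ext-orthogonality.

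The one place you are more explicit than the paper is the verification of the ``zero or one replacement'' axiom in Definition~\ref{def:cluster theory}. The paper does not spell this out; it is absorbed into the claim that the combinatorics is encoded in $\mathcal{P}_{\overline{n,\infty}}$, so that the property is inherited from the disk model (where a non-limiting arc in a triangulation has a unique flip and a limiting arc has none, as in \cite{paquette2021completions}). Your plan to check this directly via a case analysis on the local region cut out by neighbouring edges is a sound way to fill in what the paper leaves implicit, and your identification of the limiting-edge configurations as the ``zero'' case is exactly right.
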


We include a similar conjecture to Conjecture~\ref{conj:main_result A} by restricting to the weakly cluster tilting subcategories in each $\mathcal{C}(\mathbb{D}_{\overline{n,\infty}})$.

\begin{conjecture}\label{conj:main_result B}
    The weakly cluster tilting subcategories of each weak cluster category $\mathcal{C}(\mathbb{D}_{\overline{n,\infty}})$ form a weak cluster structure.
\end{conjecture}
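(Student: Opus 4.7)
The plan is to verify the two axioms of Definition~\ref{def:weak cluster structure} for the collection $\mathcal{T}$ of weakly cluster tilting subcategories of $\mathcal{C}(\mathbb{D}_{\overline{n,\infty}})$, restricted (as in the analogous situation of \cite{paquette2021completions}) to those triangulations whose limit arcs admit combinatorial flips. By the bijection $\Phi$ and the proposition identifying $\Ext$-orthogonality with non-crossing, weakly cluster tilting subcategories correspond to triangulations of $\mathcal{P}_{\overline{n,\infty}}$, so the problem becomes largely combinatorial on the $\mathcal{P}_{\overline{n,\infty}}$ side.

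For axiom (i), given a non-limit tagged edge $E$ in a triangulation $T$, the two regions of $\mathcal{P}_{\overline{n,\infty}}\setminus T$ incident to $E$ bound a quadrilateral (or a digon, when $E$ is drawn to the puncture), and the unique flip $E^{\ast}$ is the opposite diagonal. This parallels Schiffler's finite argument \cite{schiffler2008geometric} and the $\mathbb{A}$-type infinite results of \cite{holm2012cluster, igusa2015cyclic, paquette2021completions}; transporting back via $\Phi^{-1}$ produces the unique exchange partner $M^{\ast}$ of $M = \Phi^{-1}(E)$. For axiom (ii), each such quadrilateral yields two short exact sequences whose middle terms are direct sums of elements of $T\setminus\{M\}$, in the style of Figure~\ref{fig:mut_D5}. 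These lift to distinguished triangles in $\mathcal{D}^b(\mathbb{D}_{\overline{n,\infty}})$ and descend to $\mathcal{C}(\mathbb{D}_{\overline{n,\infty}})$ via the localisation $\pi$ introduced in the previous subsection; the $2$-Calabi--Yau property provides both pairs $(f,g)$ and $(s,t)$ from the same construction up to the shift $[1]$. For tagged edges touching the puncture or some accumulation point $(j,\infty)$ one uses the explicit modules $M_{[1,-1,\ell,m]}^{\pm}$, $M_{(j,\infty,\ell,m]}$ and $M_{(j,\infty)^2(\ell,m)}$ together with the $\tau$-rules listed above.

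The approximation property is checked locally using the mesh category description of $\ind \mathcal{D}^b(\mathbb{D}_{\overline{n,\infty}})$ derived from Proposition~\ref{prop: properties mD}: any morphism from an indecomposable $X \in T\setminus\{M\}$ to $M$ corresponds to a mesh path, and after the quotient by $\tau^{-1}[1]$ this path is forced to factor through one of the two sides of the quadrilateral bounded by $E$, giving a unique factorisation through $g$ (respectively $t$). Minimality then reduces to the non-existence of a non-trivial direct summand of $B$ (respectively $B'$) mapping to zero in $M$, which follows from the fact that each region of the triangulation corresponds to an indecomposable mesh in the AR-quiver.

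The main obstacle, and the very reason this is stated as a conjecture, is the limit-arc case exemplified by Figure~\ref{fig:weak cluster holm-jorgensen}: when an arc $E$ sits between two ``fountains'' that do not pair up coherently, the would-be minimal approximation of $M$ requires an infinite direct sum in $T\setminus\{M\}$ and no genuine minimal approximation exists. Making precise which triangulations of $\mathcal{P}_{\overline{n,\infty}}$ have to be excluded from $\mathcal{T}$, and showing that the remaining collection is closed under mutation and satisfies both axioms without further obstruction, is the essential content left to verify.
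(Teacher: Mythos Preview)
The paper offers no proof of this statement: it is recorded as a conjecture precisely because, as stated just before Conjecture~\ref{conj:main_result A}, the authors have \emph{not} carried out the computations showing that mutation is given by left- and right-approximations. So there is nothing to compare your proposal against.

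Your proposal is also not a proof. You explicitly concede in the final paragraph that the essential content---identifying which triangulations must be excluded and showing the resulting collection is closed under mutation while satisfying both axioms---is ``left to verify.'' That is the whole conjecture. The earlier paragraphs sketch a plausible strategy (translate to combinatorics via $\Phi$, flip quadrilaterals, lift to distinguished triangles, check factorisation through mesh paths), but none of the verifications are actually performed. Moreover, you quietly change the statement: the conjecture asserts that \emph{the} weakly cluster tilting subcategories form a weak cluster structure, whereas you propose restricting $\mathcal{T}$ to triangulations whose limit arcs admit flips. That restriction may well be necessary, but it is a modification of what is being conjectured, not a proof of it.

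Two smaller technical issues: Proposition~\ref{prop: properties mD} is stated for finite simply-laced Dynkin quivers and does not directly give a mesh-category description of $\ind \mathcal{D}^b(\mathbb{D}_{\overline{n,\infty}})$; you would need to establish the analogue for the thread quiver separately. And the claim that ``any morphism\ldots is forced to factor through one of the two sides of the quadrilateral'' is exactly the approximation statement that the authors declined to verify---asserting it via mesh combinatorics is not the same as proving it, particularly near accumulation points where the relevant mesh regions are infinite.
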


\medskip

\noindent\textbf{Acknowledgements.} F.M. and F.Z. were partially supported by the grants G0F5921N (Odysseus programme) and G023721N from the Research Foundation - Flanders (FWO), and the UGent BOF grant STA/201909/038. J.D.R. was supported by UGent BOF grant 01P12621.

\small{
\bibliographystyle{abbrv}
\bibliography{Cluster.bib}
}

\bigskip
\noindent
  {\bf Authors' addresses:}

 \bigskip 

   \noindent Department of Computer Science, KU Leuven, Celestijnenlaan 200A, B-3001 Leuven, Belgium\\ 
 Department of Mathematics and Statistics,
 UiT – The Arctic University of Norway, 9037 Troms\o, Norway
 \\ E-mail address: {\tt fatemeh.mohammadi@kuleuven.be}

 \medskip

\noindent Department of Mathematics W16, Ghent University, Ghent, 9000, Belgium  \\ E-mail address: {\tt job.rock@ugent.be}

\medskip
\noindent Department of Mathematics, KU Leuven, Celestijnenlaan 200B, B-3001 Leuven, Belgium
 \\ E-mail address: {\tt 
frazaffa@gmail.com}

\end{document}